\documentclass[reqno]{amsart}
\usepackage[left=2.7cm,right=2.7cm,top=3.5cm,bottom=3cm]{geometry}
\usepackage[english]{babel}

\usepackage[utf8]{inputenc}
\usepackage[T1]{fontenc}
\usepackage{amsmath}
\usepackage{amsfonts}
\usepackage{amsthm}
\usepackage{upgreek}
\usepackage{amssymb}
\usepackage[dvipsnames]{xcolor}
\usepackage{mathrsfs}
\usepackage{tikz-cd}
\providecommand{\llangle}{\mathopen{\langle\!\langle}}
\providecommand{\rrangle}{\mathclose{\rangle\!\rangle}}
\usepackage[
    colorlinks = true,
    linkcolor = black,
    urlcolor  = black,
    citecolor = blue,
    bookmarksopen = true,
    backref = page
]{hyperref}

\theoremstyle{plain}
\newtheorem{thm}{Theorem}[section]
\newtheorem{cor}[thm]{Corollary}
\newtheorem{lem}[thm]{Lemma}
\newtheorem{prop}[thm]{Proposition}

\theoremstyle{definition}

\newtheorem{eg}[thm]{Example}
\newtheorem{rmk}[thm]{Remark}

\newcommand{\field}[1]{\mathbb{#1}}
\newcommand{\Q}{\field{Q}}
\newcommand{\C}{\field{C}}

\newcommand{\R}{\field{R}}
\newcommand{\M}{\text{M}}
\newcommand{\N}{\field{N}}
\newcommand{\Z}{\field{Z}}
\newcommand{\A}{\field{A}}
\newcommand{\F}{\field{F}}
\newcommand{\p}{\field{P}}
\newcommand{\G}{\field{G}}

\newcommand{\proofstep}[1]{
  \medskip
  \noindent\emph{#1.}
}

\DeclareMathOperator{\End}{End}
\DeclareMathOperator{\Aut}{Aut}
\DeclareMathOperator{\rank}{rank}
\DeclareMathOperator{\Lie}{Lie}
\DeclareMathOperator{\ord}{ord}

\DeclareMathOperator{\Gal}{Gal}

\DeclareMathOperator{\Hom}{Hom}
\DeclareMathOperator{\id}{id}
\DeclareMathOperator{\rk}{rk}

\DeclareMathOperator{\Spec}{Spec}
\DeclareMathOperator{\Sp}{Sp}

\DeclareMathOperator{\pgl}{PGL}
\DeclareMathOperator{\PSL}{PSL}
\DeclareMathOperator{\pic}{Pic}

\DeclareMathOperator{\gl}{GL}

\DeclareMathOperator{\Stab}{Stab}
\DeclareMathOperator{\pr}{pr}

\newcommand{\acal}{\mathcal A}
\newcommand{\cala}{\mathscr A}

\newcommand{\calb}{\mathcal B}

\newcommand{\ccal}{\mathscr C}
\newcommand{\cald}{\mathscr D}

\newcommand{\calf}{\mathcal F}

\newcommand{\call}{\mathscr L}

\newcommand{\calm}{\mathscr M}

\newcommand{\ncal}{\mathscr N}
\newcommand{\calo}{\mathscr O}

\newcommand{\calr}{\mathcal R}

\newcommand{\caly}{\mathscr Y}
\newcommand{\calx}{\mathscr X}
\newcommand{\calz}{\mathcal Z}
\newcommand{\gota}{\mathfrak a}

\newcommand{\gotd}{\mathfrak d}

\newcommand{\gotm}{\mathfrak m}
\newcommand{\gotn}{\mathfrak n}

\renewcommand{\ge}{\geqslant}
\renewcommand{\le}{\leqslant}

\newcommand{\interior}[1]{
	{\kern0pt#1}^{\mathrm{o}}
}
\newtheorem{innercustomgeneric}{\customgenericname}
\providecommand{\customgenericname}{}
\newcommand{\newcustomtheorem}[2]{
	\newenvironment{#1}[1]
	{
		\renewcommand\customgenericname{#2}
		\renewcommand\theinnercustomgeneric{##1}
		\innercustomgeneric
	}
	{\endinnercustomgeneric}
}

\newcustomtheorem{customthm}{Theorem}

\numberwithin{equation}{section}

\title{The Pila--Zannier strategy for Drinfeld modules and Drinfeld modular curves}
\author{Gal Binyamini}
\address{Department of Mathematics, Weizmann Institute of Science, Israel}
\email{gal.binyamini@weizmann.ac.il}
\author{Dmitry Novikov}
\address{Department of Mathematics, Weizmann Institute of Science, Israel}
\email{dmitry.novikov@weizmann.ac.il}
\author{Francesco Maria Saettone}
\address{Department of Mathematics, Weizmann Institute of Science, Israel}
\email{francesco.saettone@weizmann.ac.il}

\begin{document}

\begin{abstract}
We extend the Pila--Zannier strategy to Drinfeld modules: we prove analogues of the Manin--Mumford theorem
for a product of two Drinfeld modules of equal rank, and of the Andr\'e--Oort theorem  for a product of two Drinfeld modular curves. In characteristic zero, several steps of this strategy rest on $o$-minimality, which has no counterpart over a function field; we replace the counting step by the rigid analytic Pila--Wilkie theorem of Binyamini--Kato, and this appears to be its first arithmetic application. The functional transcendence input, namely an analogue of the Ax--Lindemann theorem in both settings, is established here by an independent point counting argument.
\end{abstract}
  \maketitle
  \setcounter{tocdepth}{2}
	\tableofcontents

\section{Introduction}

\medskip
\noindent\textbf{The Pila--Zannier strategy over function fields.}\quad
The celebrated strategy introduced by Pila and Zannier in \cite{pz}, originally to
reprove the Manin--Mumford conjecture, has become the standard approach to
statements of Andr\'e--Oort and, more broadly, Zilber--Pink type. In outline
it runs as follows. The special points one wishes to understand are the
images, under a transcendental uniformisation, of points that are rational,
or of small height, in the uniformizing space. A counting theorem of
Pila--Wilkie  \cite{pilawilkie} bounds the number of such rational
points on the transcendental part of a definable set by $c(\varepsilon)
H^\varepsilon$; a functional transcendence theorem of Ax--Lindemann type
identifies the algebraic part as the preimage of a special subvariety; and
an arithmetic estimate bounds the Galois orbit of a special point from
below, by a fixed power of its complexity. Were a subvariety to carry a Zariski dense set of special points without itself being special, the
counting bound and the Galois lower bound would stand in direct conflict,
and it is this contradiction that forces the subvariety to be special.

The counting theorem is the step that, in characteristic zero, stems from $o$-minimality. Over a function field of positive
characteristic no such theory is available.
The rigid analytic counting theorem of the first author and Kato \cite{rigidpilawilkie} supplies a missing ingredient: a Pila--Wilkie type bound for rigid-analytic sets, proved by the methods of rigid geometry in place of $o$-minimality, together with a
refinement in blocks that furnishes the family version one needs in
applications. One of our purposes here is to show that this theorem  perfectly matches its classical counterpart  \cite{pilawilkie} in the Pila--Zannier strategy for Drinfeld modules, and thus providing its first application.

\medskip
\noindent\textbf{Main results.}\quad
We phrase our results uniformly, after Pila \cite{pilaAO}, across two
parallel situations.

Throughout, let $q$ be a power of a prime $p$. Let $A$ be the ring of elements of $F=\F_q(T)$
which are regular outside a fixed place $\infty$ of $F$. Let $F_\infty=\F_q(\!(1/T)\!)$ be the completion of $F$ at $\infty$ , and let $\C_\infty$ be the completion of an algebraic closure of $F_\infty$.
By a \emph{uniformized variety} we mean a pair $(S,\pi)$, consisting of a
$\C_\infty$-variety $S$ and a transcendental uniformisation $\pi$. In both of
the situations the underlying variety we consider is the same, namely
\[
S\;\simeq\;\A^2_{\C_\infty}
\]
and the two cases are distinguished by the analytic source of $\pi$, by the
uniformising map itself, and by the structure of an Anderson $A$-module in case~(D), and of a coarse moduli space in case~(H):
\begin{itemize}
\item[\textnormal{(D)}] \emph{(Drinfeld module)} $\cald$ and $\cald'$ are
Drinfeld $A$-modules over $\C_\infty$ of the same rank, and $S=\cald\times\cald'$
is their product, regarded as an Anderson $A$-module; its underlying variety is
$\A^2_{\C_\infty}$. The uniformisation
$\pi=\exp_\Phi\colon\G_{a,\C_\infty}^2\rightarrow S(\C_\infty)$ is the Drinfeld
exponential.
\item[\textnormal{(H)}] \emph{(hyperbolic)} $S=Y(1)\times Y(1)$ is a product of
two Drinfeld modular curves, its underlying variety being identified with
$\A^2_{\C_\infty}$ through the $j$-function. The uniformisation
$\pi=\boldsymbol{j}=j\times j\colon\Omega^2\rightarrow\A^2_{\C_\infty}$ is the
product of two copies of the $j$-function on the Drinfeld upper half plane
$\Omega$.
\end{itemize}
Each situation carries a notion of \emph{special point} of $S$, and of
\emph{weakly special} and \emph{special} subvariety. The special points are
the torsion points in case~(D) and the CM points in case~(H). The weakly
special subvarieties are the translates of Anderson sub-modules in case~(D),
and the subvarieties isomorphic to $\A^1_{\C_\infty}\times\{x\}$,
$\{y\}\times\A^1_{\C_\infty}$, or $Y_0'(N)$, by which we mean the image of the Drinfeld modular curve $Y_0(N)$ in
$\A^2_{\C_\infty}$, in case~(H). A weakly special
subvariety is \emph{special} when it is the translate of a sub-Drinfeld-module
by a torsion point in case~(D), which is a \emph{torsion subvariety}, and when it is
$S$ itself, a curve $Y_0'(N)$ or a line through a CM point in
case~(H).

We now state our Andr\'e--Oort--Manin--Mumford result, uniform across the two situations.

\begin{customthm}{A}
Let $(S,\pi)$ be a uniformized variety as in {\rm(D)} or {\rm(H)}, and in
case {\rm(H)} assume that $q$ is odd. An irreducible subvariety $V\subseteq
S$ contains a Zariski dense set of special points if and only if $V$ is
special.
\end{customthm}

In case~(D) this is a Manin--Mumford statement, in case~(H) an Andr\'e--Oort statement, and neither is new: case~(D) is within reach of the work of Denis
\cite{denismaninmumford} and Scanlon \cite{scanlon} (with a minor difference: we consider a product of {\em distinct} Drinfeld modules), and case~(H), for $q$ odd, is a theorem of Breuer \cite{breuerAO}. We underline that the restriction to odd $q$ in case~{\rm(H)} is a restriction of the present
method, as explained in  Remark~\ref{r:odd-q-restriction}: in characteristic $2$, the (ramified at $\infty$) Artin--Schreier case introduces a non-uniform factor in the height estimate for representatives
in the fundamental domain. What is new is that both follow from one and the same
method, and that they are the first arithmetic consequences we are aware of
drawn from the rigid-analytic counting theorem of \cite{rigidpilawilkie}. We
regard the uniform proof, and the functional transcendence it rests on, as
the main point of the paper.
\\

One main input for the proof is an Ax--Lindemann--Weierstrass theorem which takes one and the same form in the
linear and the hyperbolic situation.

\begin{customthm}{B}
Let $(S,\pi)$ be a uniformized variety as in {\rm(D)} or {\rm(H)}. For an
algebraic subvariety $V\subseteq S$, let $W$ be a maximal irreducible
algebraic subvariety of $\pi^{-1}(V)$. Then $\pi(W)$ is weakly special.
\end{customthm}

In case~(D) this is proved below as Theorem~\ref{alw}, in case~(H) as Theorem~\ref{p:hyperbolicAL}. We prove both from scratch, using lattice point counting, the structure of $p$-polynomials, and the classification of algebraic correspondences on $\pgl_2$. In particular, the functional transcendence input is obtained directly, without using a Gauss--Manin connection or differential-algebraic machinery.

Two features of these proofs have no counterpart in characteristic zero. The
first is that the equal rank hypothesis in case~(D) is a genuine necessity
rather than a convenience, as showed in Example~\ref{eg:different-ranks}. This phenomenon is a reminder that the
Frobenius map is never far away.

The second is the role played by the Frobenius map $\uptau$ in the hyperbolic
case. There one must understand the algebraic correspondences on
$Y(1)\times Y(1)$. In the proof, such a correspondence gives rise to a
Zariski closed subgroup $\Sigma\subseteq \pgl_{2}(\C_\infty)^2$,
which is the graph of an automorphism of the abstract group
$\pgl_2(\C_\infty)$. Over the complex numbers the relevant automorphisms are
inner, and the correspondence is governed by an ordinary conjugation. Over
$\C_\infty$ a new possibility arises: an abstract automorphism may be twisted
by a field automorphism, and the algebraicity of $\Sigma$ forces such a twist, in
the situation at hand, to be a power $\uptau^m$ of the Frobenius.

The correspondence is thus a priori governed by
$\operatorname{Inn}(\delta)\circ\uptau^m$ rather than by an honest
conjugation, and ruling out the case $m>0$ is the content of
Proposition~\ref{p:nofrob}. The proof uses a counting argument. If $m>0$, such a twist would force a
finite-index subgroup of $\pgl_2(\F_q[T])$ into a conjugate of
$\pgl_2(\F_q[T^{q^m}])$. This is impossible: after restricting to the unipotent subgroup, one would have to fit a finite-index additive subgroup of $\F_q[T]$ inside the much sparser subring $\F_q[T^{q^m}]$.

\medskip
\noindent\textbf{Organization.}\quad
Section~2 develops the analytic and arithmetic machinery on which the proof
rests. Although it starts from standard foundations, much of it consists of
estimates established here for the purpose. Its first part recalls definitions and basic properties of Drinfeld modules and their rigid analytic uniformisation, and then proves the inputs the counting
argument requires: that the exponential preimages of torsion points lie in a
single affinoid ball (Lemma~\ref{l:torsionball}), a lower bound
for the Galois action on torsion (Lemma~\ref{masserboundtorsion}, actually based on \cite{breuertorsionbound}), and a
comparison between the height of such a preimage and the order of the
corresponding torsion point (Lemma~\ref{l:heightorder}).

Its second part, on Drinfeld modular curves, recalls the notions of the Drinfeld upper half
plane, the $j$ function, the CM theory of rank-$2$ Drinfeld modules, and the
Taguchi height---the Drinfeld analogue of the Faltings height---and establishes the geometric and height-theoretic estimates we use: a covering
of the truncated fundamental domain by unit balls (Lemma~\ref{l:cover}),
height bounds for quadratic points in the fundamental domain (valid in every characteristic, the even case included Lemma~\ref{l:heightfundamentaldomain}),
and a Colmez-type bound for the Taguchi height of a CM Drinfeld module (Lemma~\ref{colmezbound}). The principal result of this part is Proposition~\ref{p:taguchi-weil}, a Drinfeldian analogue of Silverman's comparison \cite{silv} between the Faltings height of an elliptic curve and the Weil height of its $j$-invariant: it relates the Taguchi height of a rank-$2$ Drinfeld module to the Weil height of its $j$-invariant, with an error term logarithmic in the Weil height. We use it here to bound the height of a CM point, and the estimate is of independent interest and we expect to apply it beyond the scope of the present paper.

Section~3 carries out the Pila--Zannier strategy and is divided into three
parts. The first proves the Ax--Lindemann--Weierstrass theorem of Theorem~B,
which we already discussed. The second states the rigid analytic Pila--Wilkie theorem of the first author and Kato \cite{rigidpilawilkie} in the form we use, namely its
overconvergent version and the refinement in blocks
(Theorem~\ref{t:blockcounting}). The third and final part combines these with the height and
Galois orbit estimates of Section~2 to prove Theorem~A: the Manin--Mumford
case~(D) first, and then the Andr\'e--Oort case~(H).

\medskip
\noindent\textbf{Comparison with the classical strategy.}\quad
Beyond the characteristic-$p$ features of the Ax--Lindemann theorem noted
above, the way the strategy is assembled departs from the classical arguments
in two further, essential respects; for the latter we refer to Scanlon's
survey \cite{scanlonsurvey}. Both departures trace back to the absence of the
real (``Betti'') structure on which $o$-minimality rests.

The first is already visible for the Manin--Mumfor theorem, and it concerns the counting
space itself. In the classical setting the real uniformisation $\R^{2g}/\Z^{2g}\to A(\C)$ of an abelian variety furnishes Betti coordinates:
a torsion point becomes a rational point inside the \emph{bounded} fundamental
box $[0,1)^{2g}$, the covering restricted to that box has finite fibers, and
the plain Pila--Wilkie theorem already suffices as in \cite{pz}. We have no such real structure and no Betti
coordinates. A torsion point of $\cald\times\cald'$ lifts under $\exp_\Phi^{-1}$ to
a vector $\big(\sum_i a_i\xi_i,\ \sum_j b_j\xi'_j\big)$ whose coefficients
$(a_1,\dots,a_r,b_1,\dots,b_r)$ are recorded in the \emph{higher-dimensional}
parameter space $\A^{2r}_{\C_\infty}$, as the rank $r$ of the Drinfeld modules
entering through the $2r$ lattice coordinates. The linear map
$\ell\colon\A^{2r}_{\C_\infty}\to\G_{a,\C_\infty}^2$ that reconstitutes the
point from its coefficients has fibers of dimension $2r-2$, hence positive dimensional as soon as $r\ge 2$; these fibers are genuine algebraic
blocks inside the affinoid we count in. We are therefore forced to invoke the refinement in blocks
(Theorem~\ref{t:blockcounting}) already in the Manin--Mumford case, where
classically it is not needed.

The second concerns the Andr\'e--Oort theorem and the very applicability of the counting
theorem. Classically the fundamental domain of $\mathrm{SL}_2(\Z)$ on the
upper half plane is semialgebraic, and although it runs out to the cusp, the
$q$-expansion expresses the restriction of $j$ to it through the real
exponential, so that $o$-minimality tames the cusp and Pila--Wilkie applies to
the whole domain at once \cite{pilaAO,scanlonsurvey}. We have neither
$o$-minimality nor a bounded ambient space: the Drinfeld fundamental domain
$\calf$ runs out to the cusp, and $\calf$ is not affinoid, so the
rigid analytic counting theorem cannot be applied to $\calf$ directly. We must
first manufacture a bounded domain, and this is where the height enters.
Combining the Colmez-type bound for the Taguchi height (Lemma~\ref{colmezbound},
a Drinfeld analogue of an estimate that Tsimerman established, to a different end, in \cite{tsimao}) with the comparison
between the Taguchi and Weil heights (Proposition~\ref{p:taguchi-weil}), we
bound $h(j(z))\ll_\varepsilon|D|_\infty^\varepsilon$, hence the imaginary part
$|z|_i\le |D|_\infty^\varepsilon$, of a CM point of discriminant $D$. This
confines the relevant conjugates to a bounded region, which
Lemma~\ref{l:cover} then covers by $|D|_\infty^{O(\varepsilon)}$ unit
balls, each a bounded affinoid to which rigid Pila--Wilkie does apply. In this sense, the Taguchi height plays here the role that
$o$-minimality plays in the classical proof at the cusp, as it supplies the boundedness needed before the counting theorem can be applied. This mechanism is not specific to positive
characteristic. In characteristic zero one could similarly combine the
Faltings height of CM elliptic curves with Silverman's comparison between
Faltings and Weil heights to truncate the classical fundamental domain before
counting; the usual $o$-minimal proof simply makes this truncation
unnecessary. By the
pigeonhole principle one ball captures a definite proportion of the Galois
conjugates, and we count there. The final point is one of uniformity: the
number of balls grows with $D$, so the counting constant must not depend on
which ball we land in. It is the family form of the rigid-analytic
counting theorem (Theorem~\ref{t:blockcounting}) that yields a constant uniform
across the cover, and this uniformity is what carries the argument.

\medskip
\noindent\textbf{Earlier works.}\quad
The Andr\'e--Oort conjecture for products of Drinfeld modular curves was first proved by Breuer \cite{breuerAO}, and extended in \cite{breuer2}, adapting
Edixhoven's method to the Drinfeld setting; unlike its archimedean model the
proof is unconditional, the Riemann hypothesis for function fields being a
theorem of Deligne. It was carried to Drinfeld modular varieties in
\cite{breuer3}, for special points lying in a single Hecke orbit, and in \cite{drinfeldmodvarAO} under further technical hypotheses; the
latter result has no characteristic-zero counterpart. More recently,
\cite[Remark~3.28]{breut} proposes a first notion of special subvariety in a
moduli space of Shtukas, with a view towards an Andr\'e--Oort statement in that
setting. In a different direction,
\cite{angles} prove an effective Andr\'e--Oort statement for the family of hyperbolas $XY=\gamma$, for $\gamma\in\F_q[T]^\times$, by means of height estimates. On the Manin--Mumford side, the conjecture of Denis \cite{denismaninmumford} on the torsion of a Drinfeld module was established by Scanlon \cite{scanlon} for the self-product of a Drinfeld module through the model theory of difference fields.

\medskip
\noindent\textbf{Future directions.}\quad The Manin--Mumford and Andr\'e--Oort statements
proved here are the first instances of the Zilber--Pink conjecture. The
Pila--Zannier strategy behind them, however, is not bound to this layer: supplied with the
appropriate functional transcendence, the same machinery is expected to reach higher strata of Zilber--Pink. That transcendence is no longer Ax--Lindemann but various incarnations of the Ax--Schanuel
theorem, which we establish for the Drinfeld $j$-function in the forthcoming work \cite{bns};
combined with the rigid Pila--Wilkie counting, and with the new height estimates that such a
generalization demands, we expect it to yield Zilber--Pink results in the Drinfeld modular setting. On the linear side, the first inroads into this conjecture for Drinfeld modules are due to Brownawell and
Masser, who in \cite{bm} formulate a Zilber--Pink statement for curves in a product of Carlitz
modules and prove it in dimension three, by a height bound rather than by point counting.

\medskip
\noindent\textbf{Notation and conventions.}\quad Let $q$ be a power of a prime $p$. Set
$C:=\mathbb P^1_{\mathbb F_q}$ and let $ \infty:=[1:0]\in C$.
Consider $C-\{\infty\}\simeq \mathbb A^1_{\mathbb F_q}$ with coordinate $T$.
Then the ring of functions on $C$ regular away from $\infty$ is
\[
A:=H^0(C-\{\infty\},\calo_C)=\mathbb F_q[T]
\]
and its fraction field is $F:=\mathbb F_q(T)$.

Let $v_\infty$ be the place of $F$ corresponding to $\infty$, normalized by
\[
v_\infty(f/g)=\deg_T(g)-\deg_T(f)\qquad \text{for}\;\;f,g\in \mathbb F_q[T],\ g\neq 0,
\]
and set the associated absolute value
\[
|f/g|_\infty:=q^{-v_\infty(f/g)}=q^{\deg_T(f)-\deg_T(g)}.
\]
It is non-archimedean; that is,
$|x+y|_\infty\le
\max\{|x|_\infty,|y|_\infty\}$ for all $x,y\in F$.
The completion of $F$ with respect to $|\cdot|_\infty$ consists of formal Laurent series
\[
F_\infty= \mathbb F_q(\!(1/T)\!)
\]
and its ring of integers is $\calo_{F_\infty}=\mathbb F_q[\![1/T]\!]$.
For $x\in F$ written uniquely as a reduced fraction $x=a/b$ with $a,b\in A$ coprime, we set
\[
H(x):=\max\{|a|_\infty,|b|_\infty\}
\]
and its logarithmic version
\[
h(x):=\log_q H(x)=\max\{\deg_T(a),\deg_T(b)\}.
\]
This agrees with the Weil height on $\p^1$ coming from Section \ref{s:weil}. For $u\in\R_{\ge0}$, we write $\log_q^+u:=\log_q\max\{1,u\}$.

Let $L$ be a field. For a fixed $\F_q$-morphism $\iota\colon A\rightarrow L$,  the pair $(L,\iota)$ is called an $A$-field. If $\iota$ is injective we say that $L$
has \emph{generic $A$-characteristic}; otherwise $L$ has \emph{finite} characteristic. In what follows we work exclusively in the generic characteristic case.

Let $\mathbb G_{a,L}=\Spec L[X]$ be the additive group over $L$. Denote by
$\uptau\in\End_L(\mathbb G_{a,L})$ the relative Frobenius endomorphism given by $\uptau^*(X)=X^q$.
We write $L\{\uptau\}$ for the skew-polynomial ring in $\uptau$ with the relation
\[
\uptau b=b^q\uptau,
\]
for every $b\in L$.
Let $\mathbb C_\infty$ be the completion of a fixed algebraic closure of $F_\infty$; by Krasner's lemma it is algebraically closed, but not locally compact.

For a finite place $v\neq\infty$ of $F$, there is a unique monic irreducible polynomial
$P\in A$ such that $v=\ord_P$. We normalize $\ord_P$ by $\ord_P(P)=1$ and define, for $f\in F-\{0\}$,
\[
|f|_v:=q^{-\deg_T(P)\,\ord_P(f)},
\]
and $|0|_v:=0$.

If $L/F$ is a finite extension and $w$ is a place of $L$ lying above $v$, we normalize the
absolute value on $L$  by
\[
|l|_w \ :=\ \bigl|N_{L_w/F_v}(l)\bigr|_v^{\,1/[L_w:F_v]}
\]
for every $l\in L$, where $F_v$ and $L_w$ denote the completions at $v$ and $w$.

We denote by $\calo_L$ the integral closure of $A$ in $L$.

Lastly, by variety we shall mean  an integral, separated scheme of finite type.

\subsubsection*{Acknowledgments}
This work was partially done while the authors were at the Institute for Advanced Study in Princeton and they would like to thank the institute for its hospitality and for providing excellent working conditions.
G.B.\ was supported by the Marvin V.\ and Beverly J.\ Mielke Endowed Fund and the Infosys Member Fund and D.N.\ was supported by Kovner Member Fund.
G.B.\ and F.M.S.\ were also supported by the European Union (ERC, SharpOS, 101087910) and by the Israel Science Foundation (grant No.\ 2067/23).
D.N.\ was also supported by the Israel Science Foundation grant 1167/17 and by Minerva grant 714141. We also thank Federico Pellarin for his comments on a first draft of this paper.

	\section{Drinfeld modules}

	For an introductory, detailed treatment of the background material here presented we invite the reader to go through \cite{pap}.

	\subsection{Powers of a Drinfeld module}

		Let us recall that, for an $L$-group scheme $G$ with unit section $e\colon \Spec L\rightarrow G$, the tangent space of $G$ along $e$ is $\Lie(G):=\Hom_L(e^*\Omega^1_{G/L},L)$. In particular, for $G$ smooth over $\Spec L$, then $\Lie G$ is a locally free $L$-module of rank equal to the $L$-dimension of $G$. Hence, for $G^n$ the $n$th fiber product of $G$ over $\Spec L$, we have that $\Lie G^n=(\Lie G)^n$.

	\subsubsection{Drinfeld modules}

    Let $(L,\iota)$ be an $A$-field. A {\em Drinfeld $A$-module} $\cald$ over $L$ consists of a pair
    \[
    (G,\varphi)
    \]
    where $G$ is an $L$-group scheme isomorphic to $\G_{a,L}$ and $\varphi$ is an $\F_q$-algebra morphism
    \[
    \varphi\colon A\rightarrow \End_L(G)\simeq L\{\uptau\},\;\;a\mapsto \varphi_a
    \]
	such that the induced action on $\Lie G$ is given by the structure map $\iota$ and such that $\varphi$ is not the composition of $\iota$ with $L\subset L\{\uptau\}$. In symbols, $\Lie(\varphi_a)=\iota(a)$ and $\deg_\uptau(\varphi_a)>0$ for every $a\in A$. Note that, since $A$ is generated by $T$ over $\F_q$, then $\varphi_T$ uniquely determines $\varphi$. The {\em rank} of $\cald$ is the natural $r=\deg_\uptau(\varphi_T)$ such that $\deg_\uptau(\varphi_a)=r\deg(a)$ for every $a\in A$. We can thus write $$\varphi_T=\iota(T)+g_1(T)\uptau+\dots+g_r(T)\uptau^r,$$
    for $g_i(T)\in L$ and $g_r(T)\in L^\times$.

	A morphism between Drinfeld $A$-modules $\cald$ and $\cald'$ over $L$ is a morphism $f\colon G\rightarrow G'$ of $L$-group schemes  such that $\varphi_T'\circ f=f\circ\varphi_T$.

	\subsubsection{Products of Drinfeld modules}

    We briefly introduce, following \cite[Section 5]{goss}, the higher dimensional analog of  Drinfeld modules, which allows us to consider a power of the latter.
	\\

	An {\em Anderson $A$-module} $\cala$ of dimension $d$ over $L$ consists of a pair
	\[
	(G_d,\Phi)
	\]
	where $G_d$ is a $L$-group scheme isomorphic to $\G_{a,L}^d$ and $\Phi$ is an $\F_q$-algebra morphism
	\[
	\Phi\colon A\rightarrow \End_L(G_d)\simeq \M_d(L)\{\uptau\}
	\]
	such that $\Lie(\Phi_T)=\iota(T)I_d+N$, where $N\in M_d(L)$ is a nilpotent matrix, and such that $\ker \Phi_T$ is a non-trivial finite group scheme. The {\em rank} of $\cala$ is given by $\log_q\#\ker(\Phi_T)$.

	A morphism of Anderson $A$-modules $\cala$ and $\cala'$ over $L$ is a morphism $G_d\rightarrow G_{d'}$ of $L$-group schemes commuting with the action of $A$.

	In particular, given two Drinfeld $A$-modules $\cald$ and $\cald'$ over $L$, from now on we will focus on the Anderson $A$-module, which we denote
    $$\cald\times\cald'$$
    obtained by taking the Cartesian product of $\cald$ and $\cald'$, the diagonal action $\Phi_T=\text{diag}(\varphi_T,\varphi'_T)$ and $N=0$.

	Lastly, a {\em sub-$\Phi$-module} of $\cald\times\cald'$ is defined to be a connected $L^{\text{sep}}$-algebraic subgroup $H$ of $\G_{a,L^{\text{sep}}}^2$ which is  stable under the action of $A$ via $\Phi$, i.e., $\Phi_a(H)\subseteq H$ for all
$a\in A$.

	\subsubsection{Rigid analytic uniformisation}

	Recall that by an {\em $A$-lattice} $\Lambda$ in $\C_\infty$, we mean a discrete, finitely generated projective $A$-submodule of $\C_\infty$.
	Since $A$ is a principal ideal domain, then we can write $\Lambda=\bigoplus_{i=1}^rAz_i$ for $\{z_1,\dots,z_r\}$ a $F_\infty$-basis for a $F_\infty$-subspace of $\C_\infty$; hence the rank of $\Lambda$ is its rank as a free $A$-module. We highlight that, since  $\C_\infty$ has infinite degree over $F_\infty$, there exist lattices of any rank $r\ge1$.

	To such a lattice, one can attach the {\em Drinfeld exponential}
	\[
\exp_\Lambda(z):=z\prod_{0\neq\lambda\in\Lambda}\left( 1-\dfrac{z}{\lambda}\right)
\]
	for $z\in \C_\infty$. Since the intersection of $\Lambda$ with any ball in $\C_\infty$ is finite, the infinite product converges and the Drinfeld exponential is an entire rigid-analytic function. It admits an expansion $\exp_\Lambda(z)=z+\sum_{n>0} a_n z^{q^n}$ and it defines a surjective $\F_q$-linear map whose kernel $\Lambda$ consists of simple zeroes only. Remarkably, via this expansion one can show  the additivity of the Drinfeld exponential: for every $z,w$ one has
    \[
    \exp_\Lambda(z+w)=\exp_\Lambda(z)+\exp_\Lambda(w)
    \qquad
\text{and}
\qquad
\exp_\Lambda(\lambda z)=\lambda\exp_\Lambda(z)
	\]
     for $\lambda\in\F_q$, so that $\exp_\Lambda$ is $\Lambda$-periodic. In particular, as $a_0=1$, the Drinfeld exponential admits a composition inverse
    $\log_\Lambda(z)=z+\sum_{n>0} b_nz^{q^n}$
	which has positive radius of convergence.

	In \cite[Proposition 3.1]{drinfellipt}, Drinfeld showed an equivalence of categories between Drinfeld $A$-modules over $\C_\infty$ and $A$-lattices in $\C_\infty$.  For each $a\in A$, there exists  $\varphi_{\Lambda,a}\in\C_\infty\{\uptau\}$ of degree $q^{r\deg(a)}$ such that the Drinfeld exponential satisfies the functional equation
	\begin{equation}\label{drinfeldexp}
    \exp_\Lambda(\iota(a)z)=\varphi_{\Lambda,a}(\exp_\Lambda(z)).
	\end{equation}
	Through the $\F_q$-algebra morphism $\varphi_\Lambda$ one has a Drinfeld $A$-module structure on $\G_{a,\C_\infty}$, considered as $\C_\infty/\Lambda$ via $\exp_\Lambda$.

	To summarize, the Drinfeld exponential induces the following sequence

	\begin{equation}\label{e:sesexp}
	\begin{tikzcd}
		&0\arrow{r} & \Lambda\arrow{r} & \Lie\G_{a,\C_\infty}\arrow{r}{\exp_\Lambda}& \cald(\C_\infty)\arrow{r}&0
	\end{tikzcd}
	\end{equation}
	which is exact.
	\\
In particular, consider
\[
\widetilde{\pi}:=\big(T-T^q\big)^{\frac{1}{q-1}}\prod_{i=1}^\infty\left(1-\dfrac{T^{q^i}-T}{T^{q^{i+1}}-T} \right)\;,
\]
which is known to be transcendental over $F$.

The $A$-lattice $\widetilde{\pi}A$ corresponds to the Drinfeld module $\ccal$ of rank-$1$  known as {\em Carlitz module}, whose exponential is given by
\begin{equation}\label{carlitzexp}
    \exp_\ccal(z)=\sum_{n\ge 0}\dfrac{z^{q^n}}{D_n}
\end{equation}
where $D_n=\big( T^{q^n}-T\big)D^q_{n-1}$ and $D_0=1$. Note that $\big(T^q-T\big)^\frac{1}{q-1}$ can be thought of as {\em an}\footnote{Another analogue could simply be $(-T)^{\frac{1}{q-1}}$.} analogue of the complex imaginary unit. Thus, the function field counterpart of the classical complex $\exp(2\pi iz)$ is given by
\[
Q(z):=\frac{1}{\exp_\ccal\big(\widetilde{\pi}z\big)}
\]
so that we have Laurent expansions of modular functions in terms of $Q(z)$, which we will just denote by $Q$.
\\

The higher dimensional situation is somehow subtler. Given an Anderson $A$-module $\cala$ of dimension $d$ over $\C_\infty$, there exists a unique exponential function
$$\exp_\Phi\colon \Lie\G_{a,\C_\infty}^d\rightarrow \cala(\C_\infty)$$
defined via the power series $\exp_\Phi(z)=zI_d+\sum_{i>0} A_iz^{q^i}$, for $z=(z_1,\dots,z_d)$ and $A_i\in M_d(\C_\infty)$, such that $\exp_\Phi(\Lie(\Phi_a(z)))=\Phi_a(\exp_\Phi(z))$. As before, $\exp_\Phi$ is entire on $\C_\infty^d$, but in general it fails to be surjective. Whenever $\exp_\Phi$ is surjective, the Anderson $A$-module is said {\em uniformizable}, and $\cald\times\cald'$ is clearly so.

\subsubsection{Torsion points}

Let $L'$ be an $L$-algebra and let $\cald$ be a Drinfeld $A$-module of rank $r$ over $L$.
For $a\in A$, the {\em $a$-torsion submodule} $\cald[a]$ of  $\cald$ is the closed subscheme whose functor of points send $L'$ to
	 \[
	 \cald[a](L')=\{x\in\cald(L'): \varphi_a(x)=0\}.
	 \]
	 Since every non-zero morphism between Drinfeld $A$-modules is an isogeny (see \cite[Definition~3.3.1]{pap}), the $a$-torsion submodule is a finite locally free $L$-group scheme.

     Moreover, $\cald[a]$ has also an $A$-module structure via $f.x=\varphi_f(x)$ for $f\in A$ and $x\in\cald[a](L')$. In fact, since $\varphi_a\circ\varphi_f=\varphi_{af}=\varphi_{fa}=\varphi_f\circ\varphi_a$, we have that $\varphi_a(f.x)=0$.  By a structure theorem as \cite[Theorem 1.2.3]{pap}, it is easy to see that $\#\cald[a]=|a|_\infty^r$. Since we are in generic characteristic, $\varphi_a$ is separable; hence its splitting field $L\big(\cald[a]\big)$ is a Galois extension of $L$.

	 We define the {\em torsion submodule} to be the (ind-)subscheme $\cald_{\text{tor}}$ whose functor of points is
	 \[
	 \cald_{\text{tor}}(L')=\bigcup_{a\in A-\{0\}}\cald[a](L')
	 \]
	 for an $L$-algebra $L'$. Since $A$ is a principal ideal domain, for each $x\in\cald_{\text{tor}}(L')$ there exists a unique monic polynomial $a\in A$, called the {\em order} of $x$, such that $\varphi_a(x)=0$ and minimal with respect to polynomial division. Note that, since every $\varphi_a$ is separable, then $\cald_{\text{tor}}(L^{\text{alg}})\subset L^{\text{sep}}$. In addition, via the inverse of the Drinfeld exponential, one sees that
	 \begin{equation}\label{e:torsion}
	 \cald_{\text{tor}}(\C_\infty)\simeq \dfrac{F\otimes_A\Lambda}{\Lambda}.
	 \end{equation}
	 As $\Lambda$ has $A$-rank $r$, we have $(F\otimes_A\Lambda)/\Lambda\simeq (F/A)^r$. All these considerations hold for $\cald\times\cald'$ simply by keeping track of its dimension and of the ranks. In particular, note that $(\cald\times\cald')_{\text{tor}}(L^{\text{alg}})$ is Zariski dense in $\G_{a,L^{\text{sep}}}^2$.
	 \\

Henceforward, let $\cald=(G,\varphi)$ and $\cald'=(G',\varphi')$ denote two Drinfeld $A$-modules of ranks $r$ and $r'$ respectively, with associated lattices $\Lambda$ and $\Lambda'$, and fix two $A$-bases $(\xi_i)_{i=1}^r$ of $\Lambda$ and $(\xi'_j)_{j=1}^{r'}$ of $\Lambda'$.

\begin{lem}\label{l:torsionball}
Let $x\in\cald_{\text{tor}}(\C_\infty)$ and let $z\in\exp_\cald^{-1}(x)$. Writing
$z=\sum_{i=1}^r a_i\xi_i$ with $a_i\in F$,
there exist  unique polynomials $P_i\in A$ such that, setting
$\tilde a_i:=a_i-P_i$, one has
 $\tilde z:=\sum_{i=1}^r \tilde a_i\xi_i \equiv z \bmod \Lambda$,

and
\[
|\tilde a_i|_\infty \le \dfrac{1}{q}
\]
for all $i$.
\end{lem}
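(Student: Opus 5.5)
The proof comes down to a Euclidean division in $F_\infty$, done one coordinate at a time. First I will check that the coefficients $a_i$ are well defined. Since $x$ is torsion, the description \eqref{e:torsion} shows that $z$ lies in $F\otimes_A\Lambda$. The $\xi_i$ form an $F_\infty$-linearly independent family (they are an $F_\infty$-basis of an $F_\infty$-subspace of $\C_\infty$), so they are in particular $F$-linearly independent. Hence the expression $z=\sum_i a_i\xi_i$ with $a_i\in F$ exists and is unique.

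Next, for each $i$ I write $a_i=f_i/g_i$ with $f_i,g_i\in A$ and $g_i\neq 0$. Polynomial division in $A=\F_q[T]$ gives $f_i=P_ig_i+R_i$ with $\deg_T R_i<\deg_T g_i$ (or $R_i=0$). I then set $\tilde a_i:=a_i-P_i=R_i/g_i$. By the normalisation of $|\cdot|_\infty$,
\[
|\tilde a_i|_\infty=q^{\deg_T R_i-\deg_T g_i}\le q^{-1},
\]
and this holds trivially when $R_i=0$. Because each $P_i\in A$, the element $\sum_i P_i\xi_i$ lies in $\Lambda$, so $\tilde z\equiv z \bmod \Lambda$. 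In particular $\exp_\cald(\tilde z)=x$.

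For uniqueness, suppose $P_i$ and $P_i'$ both satisfy the conditions. Then $P_i-P_i'=\tilde a_i'-\tilde a_i$, so by the ultrametric inequality $|P_i-P_i'|_\infty\le q^{-1}<1$. A nonzero element of $A$ has absolute value $q^{\deg}\ge 1$, so $P_i=P_i'$. Equivalently, this is the decomposition $F_\infty=A\oplus \frac1T\F_q[\![1/T]\!]$ into polynomial part and the part of strictly negative degree.

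There is essentially no obstacle here. The only point needing care is that the decomposition $z=\sum_i a_i\xi_i$ is unique. Without that, the $P_i$ could only be unique relative to a chosen representation of $z$, which is in any case how the statement is phrased.
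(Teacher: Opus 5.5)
Your proposal is correct and follows essentially the same route as the paper. The paper splits each $a_i$ into its polynomial part $P_i\in A$ and a remainder in $\frac{1}{T}\F_q[\![1/T]\!]$, which is exactly what your Euclidean division produces, and in both arguments uniqueness comes from the fact that $A$ meets the ball $\{|\cdot|_\infty\le q^{-1}\}$ only in $0$. Your extra check that the coefficients $a_i\in F$ exist and are uniquely determined is a harmless addition that the paper leaves implicit.
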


\begin{proof}
For each $i$,  the rational function $a_i\in F$ can be written uniquely as
$a_i=P_i+\tilde a_i$ for
$P_i\in A$ and $\tilde a_i\in \frac{1}{T}\F_q[\![1/T]\!]$. Then $\sum_i P_i\xi_i\in\Lambda$, so $\tilde z\equiv z\pmod{\Lambda}$.
The uniqueness is straightforward: if $a_i-P_i\in \frac{1}{T}\F_q[\![1/T]\!]$ and $a_i-Q_i\in \frac{1}{T}\F_q[\![1/T]\!]$ with $P_i,Q_i\in A$, then
$P_i-Q_i\in A\cap \frac{1}{T}\F_q[\![1/T]\!]=\{0\}$.
\\Finally, $\infty$-adic valuation of $\tilde a_i$ is positive, that is
$|\tilde a_i|_\infty\le q^{-1}$.
\end{proof}

The following lemma states the analog of a result of Masser
\cite[Corollary~(ii), p.156]{massertorsionbound} on the Galois action on
torsion points.

\begin{lem}\label{masserboundtorsion}
There exist positive constants
$C'=C'(\cald\times\cald',L)$ and $\eta=\eta(\cald\times\cald',L)$
such that, for any torsion point
$x\in(\cald\times\cald')_{\mathrm{tor}}(L^{\mathrm{alg}})$
of order $a\in A-\{0\}$, one has
\[
[L(x):L]\ge C' |a|_\infty^\eta.
\]
\end{lem}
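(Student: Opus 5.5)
Since the paper says this rests on Breuer's torsion bound \cite{breuertorsionbound}, the plan is to reduce to a single Drinfeld module and quote that result. Here $L$ is a finitely generated field of generic characteristic over which $\cald$ and $\cald'$ are defined, say a finite extension of $F$.

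Write $x=(x_1,x_2)$ with $x_1\in\cald_{\mathrm{tor}}$ of order $a_1$ and $x_2\in\cald'_{\mathrm{tor}}$ of order $a_2$. The order $a$ of $x$ divides, and hence equals, the monic generator of $\mathrm{lcm}(a_1,a_2)$. This gives
\[
|a|_\infty\le |a_1|_\infty\,|a_2|_\infty\le \max\{|a_1|_\infty,|a_2|_\infty\}^2 .
\]
Since $L(x)\supseteq L(x_i)$ for $i=1,2$, we have $[L(x):L]\ge\max_i[L(x_i):L]$. It therefore suffices to prove the bound for one Drinfeld module $\cald$ of rank $r$ over $L$: there should be $C,\eta_0>0$ with $[L(y):L]\ge C|b|_\infty^{\eta_0}$ for every $y\in\cald_{\mathrm{tor}}$ of order $b$. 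Granting this for $\cald$ and $\cald'$, take $C'=\min(C,C')$ and $\eta=\min(\eta_0,\eta_0')/2$, and the lemma follows.

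For the one-module statement, the Galois group $G_L$ acts on $\cald[b]\simeq (A/b)^r$ through $\rho_b\colon G_L\to \gl_r(A/b)$. The orbit of $y$ is $\rho_b(G_L)\cdot y$, where $y$ is a vector of exact order $b$, that is, a primitive element of the $A/b$-module. Breuer's theorem (an analogue of Serre and Pink's open image results) gives an index bound for the image. There is a constant $c$, depending on $\cald$ and $L$, such that $\rho_b(G_L)$ contains the scalars $\{\lambda\in (A/b)^\times\}$ up to index at most $c$. Alternatively, it contains a subgroup of bounded index in the image of the centralizer $\gl_r(E/b)$, with $E$ the endomorphism ring.

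The orbit of $y$ then contains $\rho_b(G_L)\cap(A/b)^\times$ applied to $y$, which is free since $y$ is primitive. Its size is therefore at least $|(A/b)^\times|/c$. The function field estimate $|(A/b)^\times|\gg_\varepsilon |b|_\infty^{1-\varepsilon}$, obtained as $\varphi(b)/|b|\ge\prod_{P\mid b}(1-q^{-\deg P})$ in analogy with $\varphi(n)\gg n/\log\log n$, then gives $[L(y):L]\gg|b|_\infty^{1/2}$, say.

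The main obstacle is the uniform scalar-containment input. If only open image at each prime $P$ is available, rather than adelically, one would instead argue prime by prime. For $b=\prod P^{e_P}$, the field $L(y)$ contains $L(\varphi_{b/P^{e_P}}(y))$, a torsion point of exact order $P^{e_P}$. The degrees of these fields over $L$ are essentially multiplicative for distinct $P$, up to bounded entanglement, by the disjointness of the $P$-adic towers apart from finitely many primes. At each $P$, the Galois image contains $1+P^{k}\mathrm{M}_r(A_P)$ for a bounded $k$ and all but finitely many $P$ with $k=0$, which yields an orbit of size at least $|P|^{e_P-k}$ of the scalars. This is exactly what \cite{breuertorsionbound} packages, and I would cite it in that form.
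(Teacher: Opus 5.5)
Your argument is correct in substance, but it takes a different route from the paper, and you misattribute its key input to \cite{breuertorsionbound}.

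\textbf{The paper's route.} The paper never analyses the Galois image. It observes that the cyclic submodule $A\cdot x\simeq A/a$ consists of $|a|_\infty$ torsion points, all rational over $L(x)$. Hence $|a|_\infty\le\#(\cald\times\cald')_{\mathrm{tor}}(L(x))$. It then inverts Breuer's upper bound $\#(\cald\times\cald')_{\mathrm{tor}}(L')\le C_0\bigl([L':F]\log\log[L':F]\bigr)^{\gamma}$ at $L'=L(x)$.

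\textbf{What Breuer's theorem says.} Breuer's Theorem~1.1 is exactly this kind of torsion-count bound as a function of the degree. It does not say that $\rho_b(G_L)$ contains the homotheties $(A/b)^\times$ with uniformly bounded index. That homothety statement is true, but its source is the adelic openness theorem of Pink--R\"utsche. After a finite extension making $\End(\cald)$ rational, the image of Galois is open in the centraliser of $\End(\cald)$ in $\gl_r(\hat A)$. So it contains $1+N\hat A$ for some $N$, which gives index at most $\#(A/N)^\times$ in every $(A/b)^\times$. This is the input underlying Breuer's proof, and it is what you should cite.

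\textbf{The rest of your argument.} With that correction, everything else goes through:
\begin{itemize}
\item the reduction to the factors via $\mathrm{lcm}(a_1,a_2)$;
\item the free action of $(A/b)^\times$ on a point of exact order $b$;
\item the bound $\#(A/b)^\times\gg_\varepsilon|b|_\infty^{1-\varepsilon}$.
\end{itemize}
Together these give any $\eta<1/2$ explicitly.

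\textbf{Your fallback.} The claim that the image contains $1+P^k\mathrm{M}_r(A_P)$ fails when $\End(\cald)\neq A$, since the image then lies in the centraliser; the scalars you actually use do survive. Also, the ``essential multiplicativity'' across primes is again adelic openness, not a consequence of $P$-adic openness at each prime.

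\textbf{Comparison.} The paper's route is shorter, uses Breuer as a black box, and needs no case distinction on endomorphism rings. Yours makes the mechanism (homotheties) explicit and gives an explicit exponent, but it needs the stronger adelic input stated directly. You could have avoided that input: the observation $A\cdot y\subseteq\cald_{\mathrm{tor}}(L(y))$ already turns Breuer's bound into the degree bound.
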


\begin{proof}
Recall that, since $x$ has order $a$, the $A$-submodule generated
by $x$ is isomorphic to $A/a$, and therefore has cardinality $\#(A/a)=|a|_\infty$.
All of these points are $L(x)$-rational torsion points of
$\cald\times\cald'$. Hence
\[
|a|_\infty
\le
\#(\cald\times\cald')_{\mathrm{tor}}(L(x)).
\]
By \cite[Theorem~1.1.II]{breuertorsionbound},
applied to $\cald\times\cald'$, there are constants $C_0,\gamma>0$,
depending only on $\cald\times\cald'$, such that
\[
\#(\cald\times\cald')_{\mathrm{tor}}(L(x))
\le
C_0\bigl([L(x):F]\log\log [L(x):F]\bigr)^\gamma.
\]
Since $L/F$ is fixed and $L(x)/L$ is finite, this implies, after absorbing
the harmless $\log\log$-factor into a small power, that there are constants
$C'>0$ and $\eta>0$, depending only on $\cald\times\cald'$ and $L$, such
that
$[L(x):L]\ge C' |a|_\infty^\eta$.
\end{proof}
\subsubsection{Weil height}\label{s:weil}

As in the number field case, this construction is independent of $L$ and of the coordinates. Fixing an embedding $\A^2\hookrightarrow \p^2$, the Weil height of points in affine spaces is obtained as the Weil height relative to this embedding. More generally, we use the same notation for points of affine space of any dimension, and write $H(a):=q^{h(a)}$ for the corresponding multiplicative Weil height.
    \\

    Let $L$ be a finite extension of $F$ of degree $d$. For a point
$x=[x_0:x_1:x_2]\in \p^2(L)$ we define its (logarithmic) Weil height by
\[
h(x)\ :=\ \frac{1}{d}\sum_{w} d_w\,\max_{0\le i\le 2}\bigl\{-\mathrm{val}_w(x_i)\bigr\},
\]
where the sum runs over all places $w$ of $L$, $\mathrm{val}_w$ is the normalized
additive valuation at $w$, and $d_w$ denotes the residual degree of $w$
over the underlying place $v$ of $F$. As in the number field case, this height is
independent of the choice of homogeneous coordinates for $x$, and is functorial in
the extension $L/F$.
For $a\in \A^2_{F^{\text{alg}}}$ we define $h(a)$ to be the height of its image in projective space under this embedding.
For a torsion point
$x\in(\cald\times\cald')_{\mathrm{tor}}(\C_\infty)$,
let $z_x=(u_x,v_x)\in\exp_\Phi^{-1}(x)$
be its reduced\footnote{By this we mean the unique element of
$\exp_\Phi^{-1}(x)$ whose coefficients with respect to the fixed
$A$-bases of $\Lambda$ and $\Lambda'$ belong to
$\frac{1}{T}\mathbb F_q[\![1/T]\!]$, or equivalently have
$\infty$-adic absolute value at most $q^{-1}$; see
Lemma~\ref{l:torsionball}.} logarithmic representative with respect to the
fixed bases $(\xi_i)_{i=1}^r$ of $\Lambda$ and
$(\xi_j')_{j=1}^{r'}$ of $\Lambda'$. Thus
$u_x=\sum_{i=1}^r t_i\xi_i$ and $v_x=\sum_{j=1}^{r'}t_j'\xi_j'$,
where $t_i,t_j'\in F$ and $|t_i|_\infty$ and $|t_j'|_\infty\le q^{-1}$. We set
\[
H(z_x):=
H(t_1,\dots,t_r,t_1',\dots,t_{r'}'),
\]
where the right-hand side is the multiplicative Weil height on
$\A^{r+r'}(F)$.

\begin{lem}\label{l:heightorder}
For every
$x\in(\cald\times\cald')_{\mathrm{tor}}(\C_\infty)$,
one has
\[
H(z_x)
\le
|\ord(x)|_\infty
\le
H(z_x)^{r+r'}.
\]
\end{lem}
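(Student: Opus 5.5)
The plan is to identify both $H(z_x)$ and $|\ord(x)|_\infty$ with explicit quantities built from the reduced denominators of the coefficients $t_1,\dots,t_r,t'_1,\dots,t'_{r'}$. Write each coefficient in lowest terms as $t_i=c_i/b_i$ and $t'_j=c'_j/b'_j$, with $b_i,b'_j\in A$ monic and $\gcd(c_i,b_i)=\gcd(c'_j,b'_j)=1$. Since $|t_i|_\infty,|t'_j|_\infty\le q^{-1}$, we have $\deg c_i<\deg b_i$, and likewise for the primed coefficients.

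\emph{Step 1: the order is an lcm.} For $a\in A$ we have $\Phi_a(x)=0$ if and only if $\exp_\Phi(az_x)=0$, that is, if and only if $az_x\in\Lambda\times\Lambda'$. This uses the functional equation \eqref{drinfeldexp} coordinatewise, together with the exactness of \eqref{e:sesexp}. Because $(\xi_i)$ is an $A$-basis of $\Lambda$ whose members are $F_\infty$-linearly independent, hence $F$-linearly independent, the condition $a\sum_i t_i\xi_i\in\Lambda$ holds if and only if $at_i\in A$ for every $i$, and the same holds on the primed side. Hence the ideal of annihilators of $x$ is $\bigcap_i(b_i)\cap\bigcap_j(b'_j)$. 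Therefore
\[
\ord(x)=\operatorname{lcm}(b_1,\dots,b_r,b'_1,\dots,b'_{r'}).
\]
This is the step where care is needed: one must use the uniqueness of the coefficients, which comes from linear independence of the bases, rather than merely the existence of some representation.

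\emph{Step 2: compute the Weil height.} View $(t_1,\dots,t'_{r'})$ as the projective point $[1:t_1:\dots:t'_{r'}]$ over $F$, and evaluate the height place by place using the normalization of Section~\ref{s:weil}. At the place $\infty$, every coordinate has absolute value at most $1$, so that place contributes $0$. At a finite place $P$, the local contribution is $\deg P\cdot\max_k\{0,-\ord_P(\text{coordinate}_k)\}$, and this is exactly $\deg P$ times the exponent of $P$ in $\operatorname{lcm}(b_i,b'_j)$. Summing over all places gives
\[
H(z_x)=|\operatorname{lcm}(b_i,b'_j)|_\infty=|\ord(x)|_\infty.
\]
An equivalent check: clear denominators by $b=\operatorname{lcm}$. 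The resulting tuple $(b,\,bt_1,\dots)$ is primitive in $A$ and has $|b|_\infty$ as its largest entry, since each $|bt_i|_\infty<|b|_\infty$.

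\emph{Step 3: conclude.} From Step 2, the left inequality $H(z_x)\le|\ord(x)|_\infty$ holds with equality. The right inequality then follows because $H(z_x)\ge1$ and $r+r'\ge1$. If one prefers to read $H(z_x)$ as the maximum of the individual heights $H(t_i)=|b_i|_\infty$, the two bounds are still direct. The lower bound holds because each $b_i$ divides the lcm. The upper bound follows from
\[
|\operatorname{lcm}|_\infty\le\prod_i|b_i|_\infty\prod_j|b'_j|_\infty\le\Bigl(\max_k|b_k|_\infty\Bigr)^{r+r'}.
\]
Either reading yields the statement, and the only genuinely substantive input is the identification of the annihilator ideal in Step 1.
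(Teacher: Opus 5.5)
Your proof is correct, and it proves more than the lemma asks. You show $\ord(x)=\operatorname{lcm}(b_1,\dots,b_r,b'_1,\dots,b'_{r'})$ and $H(z_x)=|\operatorname{lcm}|_\infty$, hence $H(z_x)=|\ord(x)|_\infty$. The right-hand inequality then becomes trivial.

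The paper uses the same two ingredients: the criterion $\Phi_a(x)=0\iff az_x\in\Lambda\times\Lambda'\iff at_i,at'_j\in A$, and clearing denominators. However, it extracts only one divisibility in each direction and then estimates.
\begin{itemize}
\item For the upper bound, the paper uses only $\ord(x)\mid\ell$. It then applies the crude bound $\deg_T\ell\le\sum_i\deg_T b_i+\sum_j\deg_T b'_j\le(r+r')\max_k\deg_T b_k$, together with $H(z_x)\ge\max_k H(t_k)$.
\item For the lower bound, it takes the homogeneous coordinates $[n:nt_1:\cdots:nt'_{r'}]$ with $n=\ord(x)$. These are integral but not necessarily primitive, so they give only $H(z_x)\le|n|_\infty$.
\end{itemize}

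Your extra step is to check that $(b,bt_1,\dots,bt'_{r'})$ is primitive when $b$ is the lcm. This holds because, for each prime $P\mid b$, some $b_k$ has the same $P$-adic valuation as $b$, and $P\nmid c_k$. That step is exactly what upgrades both inequalities to equalities, and it shows the exponent $r+r'$ is superfluous. It also makes Step 2 independent of how the place-by-place formula is normalized. You only need that a primitive tuple $(y_k)\in A^{n+1}$ has height $\max_k|y_k|_\infty$, which is the normalization fixed in the conventions by $H(a/b)=\max\{|a|_\infty,|b|_\infty\}$.

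What the paper's route buys is avoiding the gcd bookkeeping. Its upper-bound argument also survives if $H(z_x)$ is read as $\max_k H(t_k)$, a reading you cover as well. Downstream, the Manin--Mumford argument uses only the lower bound $H(\mathbf v^\sigma)\le|a|_\infty$, so nothing there needs your sharper identity. It is nonetheless the cleaner statement.
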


\begin{proof}
Write $t_i=a_i/b_i$ and $t_j'=a_j'/b_j'$ in lowest terms, with monic
denominators. Set
\[
\ell=\operatorname{lcm}(b_1,\dots,b_r,b_1',\dots,b_{r'}').
\]
Since $\ell t_i,\ell t_j'\in A$ for every $i,j$, we have
$\ell z_x\in\Lambda\times\Lambda'$. Hence, by
\eqref{drinfeldexp}, applied to each factor,
$\Phi_\ell(x)=\exp_\Phi(\ell z_x)=0$.
Therefore $\ord(x)$ divides $\ell$, and so
\[
|\ord(x)|_\infty\le|\ell|_\infty.
\]

Since $z_x$ is reduced, every non-zero coefficient satisfies $\deg_T(a_i)<\deg_T(b_i)$ and $\deg_T(a_j')<\deg_T(b_j')$. Thus $H(t_i)=q^{\deg_T(b_i)}$ and
$H(t_j')=q^{\deg_T(b_j')}$. It follows that
\[
\deg_T(\ell)
\le
\sum_{i=1}^r\deg_T(b_i)
+
\sum_{j=1}^{r'}\deg_T(b_j') \
\le
(r+r')
\max\left\{
\max_i\deg_T(b_i),
\max_j\deg_T(b_j')
\right\}.
\]
Since
$H(z_x)\ge
\max\left\{
\max_i H(t_i),
\max_j H(t_j')
\right\}$,
we obtain $|\ell|_\infty\le H(z_x)^{r+r'}$.
This proves the upper bound.

For the lower bound, set $n=\ord(x)$. By \eqref{drinfeldexp}, $\exp_\Phi(nz_x)=\Phi_n(x)=0$,
and hence $nz_x\in\Lambda\times\Lambda'$. Thus
$c_i:=nt_i\in A$ and $c_j':=nt_j'\in A$ for every $i,j$. Since $z_x$ is reduced, every non-zero $c_i,c_j'$ satisfies
\[
\deg_T(c_i)<\deg_T(n)
\qquad
\text{and}
\qquad
\deg_T(c_j')<\deg_T(n).
\]
The affine point $(t_1,\dots,t_r,t_1',\dots,t_{r'}')$
therefore has homogeneous coordinates
$[n:c_1:\cdots:c_r:c_1':\cdots:c_{r'}']$,
and these give
$H(z_x)\le q^{\deg_T(n)}=|n|_\infty$.
Since $n=\ord(x)$, this proves the lower bound.
\end{proof}

\subsection{Drinfeld modular curves}

	For an introductory, detailed treatment of the background material here presented we invite the reader to go through \cite{pap}.

\subsubsection{Drinfeld half plane}
As a reference for the equicharacteristic case, we invite the reader to see \cite{genestier}.

Consider the formal scheme $\widehat{\Omega}$ over $\calo_{F_\infty}$ obtained by successive blow-ups of rational points on the special fiber of $\p^1$, whose generic fiber (in the sense of Raynaud) is the rigid-analytic Drinfeld plane $\Omega$ over $F_\infty$ defined as
\[
\Omega=\p^1(\C_\infty)-\p^1(F_\infty)\;.
\]
 This is a connected (but not simply connected) admissible open subspace of the rigid-analytic $\p^1(\C_\infty)$. This implies that $\Omega$ has a covering by open sets $U_n$ such that the $U_n$'s are disks with $q$ smaller disks removed.
 Given a point $z\in \Omega$, we define, as in \cite[V.1]{gekeler}, its {\em imaginary part} as
\[
|z|_i:=\inf_{x\in F_\infty}|z-x|_\infty=\min_{x\in F_\infty}|z-x|_\infty\;.
\]
As we have a M\"obius action of $\gl_2(A)$ on $\Omega$, the following admissible open (and closed) subset of $\Omega$
\[
\calf=\big\{z\in\Omega : |z|_\infty=|z|_i\ge 1 \big\}
\]
is the (closest analog to a classical) fundamental domain for such an action as every  $z\in\Omega$ is $\gl_2(A)$-equivalent to some elements of $\calf$. Moreover, if $z,z'\in\calf$ are $\gl_2(A)$-equivalent, then $|z|=|z|_i=|z'|=|z'|_i$ (see \cite[Proposition 6.5]{gekeler2}). For a quadratic extension $K$ of $F$, we shall denote by $\calf_K:=\calf\cap K$.

\begin{lem}\label{l:cover}
Assume $q$ is odd and let $K=F(\sqrt D)$, with $D\in A$ squarefree and suppose that $K$ has no embeddings into $F_\infty$.
Fix $R\ge 1$ and put $d:=\lceil \log_q R\rceil$ and
$c:=\big\lceil \deg_T(D)/2\big\rceil$, and set $d':=d+c$. Let $\rho\in\{0,1\}$ be defined by $\rho=0$ in the unramified case and $\rho=1$ in the ramified case.
Define the finite set of centers
\[
C_{d'}:=\Bigl\{\,P_2+\frac{P_1}{T^{c}}\sqrt D:\ P_1,P_2\in A,\ \deg P_i\le d'\Bigr\}\subset K,
\]
and for each $P\in C_{d'}$ the unit ball
\[
U_P^\rho:=\big\{z\in\Omega:\ |z-P|_\infty\le q^{-1+\rho/2}\big\}.
\]
Then
\[
\bigl\{z\in\calf_K:\ |z|_i\le R\bigr\}\subset \bigcup_{P\in C_{d'}} U_P^\rho.
\]
In particular,
$\#C_{d'}=q^{2d'+2}=q^{O(d')}$.
\end{lem}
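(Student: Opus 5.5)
The plan is to work in the completion $K_\infty:=K\otimes_F F_\infty$. Since $K$ has no embedding into $F_\infty$, this is a quadratic field extension $F_\infty(\sqrt D)$ of $F_\infty$, and $|\cdot|_\infty$ extends uniquely to it. Every $z\in\calf_K$ can then be written uniquely as $z=x+y\sqrt D$ with $x,y\in F$, hence in particular with $x,y\in F_\infty$. We approximate $x$ and $y$ separately by truncated Laurent expansions. This produces a center in $C_{d'}$, and we check the distance bound and the degree bounds.

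\emph{Step 1: a norm splitting.} I would first show that
\[
|x+y\sqrt D|_\infty=\max\{|x|_\infty,\,|y|_\infty|D|_\infty^{1/2}\}\qquad\text{for all }x,y\in F_\infty .
\]
Split according to the parity of $\deg_T D$.
\begin{itemize}
\item If $\deg_T D$ is odd (the ramified case $\rho=1$), then $|y\sqrt D|_\infty$ lies in $q^{\Z+1/2}$ while $|x|_\infty\in q^{\Z}$. The two absolute values can never agree, so the ultrametric inequality is an equality.
\item If $\deg_T D$ is even, then the leading coefficient of $D$ must be a nonsquare in $\F_q^\times$. Otherwise, since $q$ is odd, Hensel's lemma would put $\sqrt D$ in $F_\infty$, contradicting the hypothesis. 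This is the unramified case $\rho=0$. Suppose $|x|_\infty=|y\sqrt D|_\infty$ and the leading terms cancelled. Then $D$ would be a square modulo higher-order terms, i.e.\ its leading coefficient would be a square in $\F_q$, which is impossible.
\end{itemize}
I expect this norm splitting to be the main (though mild) obstacle, since it is exactly where the hypotheses that $q$ is odd and that $K$ does not embed into $F_\infty$ are used.

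\emph{Step 2: degree bounds.} Take $z\in\calf_K$ with $|z|_i\le R$. Since $z\in\calf$, we have $|z|_\infty=|z|_i\le R$. By Step 1 this gives
\[
|x|_\infty\le R\le q^d,\qquad |y|_\infty\le q^{d-\deg_T(D)/2}.
\]
Let $P_2\in A$ be the polynomial part of $x$, and let $P_1\in A$ be the polynomial part of $yT^c$. Then
\[
\deg P_2\le d\le d',\qquad \deg P_1\le d+c-\deg_T(D)/2\le d'.
\]
Hence $P:=P_2+P_1T^{-c}\sqrt D$ belongs to $C_{d'}$.

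\emph{Step 3: the distance estimate.} By construction $|x-P_2|_\infty\le q^{-1}$ and $|yT^c-P_1|_\infty\le q^{-1}$, so
\[
|z-P|_\infty\le\max\bigl\{q^{-1},\;q^{-1-c}\,q^{\deg_T(D)/2}\bigr\}.
\]
Now compare $c$ with $\deg_T(D)/2$ in the two cases.
\begin{itemize}
\item In the unramified case $\deg_T D$ is even, so $c=\deg_T(D)/2$ and the bound is $q^{-1}$.
\item In the ramified case $\deg_T D$ is odd, so $c=\deg_T(D)/2+1/2$ and the bound is $q^{-1/2}$.
\end{itemize}
In both cases this equals $q^{-1+\rho/2}$. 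Since $z\in\Omega$, it follows that $z\in U_P^\rho$.

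\emph{Step 4: counting.} Each of $P_1,P_2$ ranges over polynomials of degree at most $d'$, and there are $q^{d'+1}$ of these. Hence $\#C_{d'}=q^{2d'+2}$.
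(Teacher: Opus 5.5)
Your proof is correct and has the same skeleton as the paper's: write $z=x+y\sqrt D$, bound $|x|_\infty$ and $|y|_\infty$, take the polynomial parts of $x$ and of $yT^c$ as the center, and count the centers. The one genuine difference is how you get $|x|_\infty\le|z|_\infty$ and $|y\sqrt D|_\infty\le|z|_\infty$.

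The paper does this in one line, with no case distinction. Let $\sigma$ be the nontrivial automorphism of the field $K_\infty=K\otimes_F F_\infty$. The paper writes $x=(z+\sigma(z))/2$ and $y\sqrt D=(z-\sigma(z))/2$. It then uses $|\sigma(z)|_\infty=|z|_\infty$, which holds because the extended absolute value is unique, together with $|2|_\infty=1$ for $q$ odd.

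You instead prove the exact orthogonality $|x+y\sqrt D|_\infty=\max\{|x|_\infty,|y|_\infty|D|_\infty^{1/2}\}$. You split on the parity of $\deg_T D$ and use Hensel to show that the leading coefficient $\alpha$ of $D$ is a nonsquare in the even case. This costs a case analysis, and your cancellation step should be made precise. For example, if $|x|_\infty=|y\sqrt D|_\infty$ and $|x+y\sqrt D|_\infty<|x|_\infty$, then $|x^2-y^2D|_\infty\le|x+y\sqrt D|_\infty\,|x|_\infty<|x|_\infty^2$. This forces $x_0^2=\alpha y_0^2$ for the leading coefficients $x_0,y_0$, so $\alpha$ would be a square. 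In return, your route gives a stronger statement than the paper needs. It also makes explicit the link between $\rho$ and the parity of $\deg_T D$, which the paper leaves implicit.

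There is one arithmetic slip in Step 3. In the ramified case, $c=\deg_T(D)/2+1/2$ gives $q^{-1-c}q^{\deg_T(D)/2}=q^{-3/2}$, not $q^{-1/2}$. So $|z-P|_\infty\le q^{-1}$ in both cases, exactly as in the paper, and your claim that the bound equals $q^{-1+\rho/2}$ is false when $\rho=1$. The containment $z\in U_P^\rho$ is unaffected: $z$ in fact lies in the smaller ball of radius $q^{-1}$ regardless of $\rho$.
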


\begin{proof}
Let $z\in \calf_K$ with $|z|_i\le R$. Write uniquely $z=a+b\sqrt D$ with
$a,b\in F_\infty$. Since $z\in\calf_K$ we have $|z|_\infty=|z|_i\le R$. Moreover, as $K$ has no embeddings into $F_\infty$, then $K\otimes_FF_\infty\simeq K_\infty$ is a quadratic field extension of $F_\infty$. Let $\sigma\in\Aut_{F_\infty}(K_\infty)$ be such that $\sigma\colon\sqrt{D}\mapsto-\sqrt{D}$.
For $z=a+b\sqrt{D}$, write $a=\frac{z+\sigma(z)}{2}$ and $b\sqrt{D}=\frac{z-\sigma(z)}{2}$. Since $q$ is odd, we have $|2|_\infty=1$. As automorphisms preserve the norm, by the ultrametric inequality we have $|a|_\infty\le \max\{|z|_\infty,|\sigma(z)|_\infty\}=|z|_\infty$. Similarly we get $|b|_\infty|\sqrt{D}|_\infty\le |z|_\infty$. Therefore
$|a|_\infty\le |z|_\infty\le R$ and $|b|_\infty\le |z|_\infty\,|\sqrt D|_\infty^{-1}\le R\,q^{-\deg(D)/2}$.

Choose $P_2\in A$ to be the polynomial part  of the Laurent expansion of $a$. Then $a-P_2\in \frac1T\calo_\infty$,
hence $|a-P_2|_\infty\le q^{-1}$.

Write $b=\sum_{n} b_nT^n$ with $b_n\in\F_q$ and define $P_1:=\sum_{n=-c}^{d} b_n\,T^{n+c}\in A$.
Then
\[
b-\frac{P_1}{T^c}\in T^{-(c+1)}\calo_\infty
\qquad\text{so}\qquad
\Bigl|b-\frac{P_1}{T^c}\Bigr|_\infty\le q^{-(c+1)}.
\]
Set $P:=P_2+\frac{P_1}{T^c}\sqrt D\in C_{d'}$. By the ultrametric inequality,
\[
|z-P|_\infty
\le \max\Bigl\{|a-P_2|_\infty,\ \Bigl|b-\frac{P_1}{T^c}\Bigr|_\infty\,|\sqrt D|_\infty\Bigr\}
\le \max\{q^{-1},\ q^{-(c+1)}|\sqrt D|_\infty\}.
\]
Since $|\sqrt D|_\infty=q^{\deg(D)/2}$ and $c=\lceil \deg(D)/2\rceil$, we have
$q^{-(c+1)}|\sqrt D|_\infty\le q^{-1}$, hence $|z-P|_\infty\le q^{-1}$.
Therefore $z\in U_P^\rho$, proving the covering.

Finally,
$\#\{P_i\in A:\deg P_i\le d'\}=q^{d'+1}$, so
$\#C_{d'}=(q^{d'+1})^2=q^{2d'+2}$.
\end{proof}

\begin{lem}\label{l:carlitz-cusp}
Let $\exp_{\ccal}$ be the Carlitz exponential
normalized so that
$\ker(\exp_{\ccal})=\widetilde\pi A$.
Then, for every $z\in\Omega$, we have
\[
\left|
\widetilde\pi\,
\exp_{\ccal}(\widetilde\pi z)^{-1}
\right|_\infty
\le
|z|_i^{-1}.
\]
In particular,
\[
\left|
\widetilde\pi\,
\exp_{\ccal}(\widetilde\pi z)^{-1}
\right|_\infty
\rightarrow 0
\qquad
\text{as}\quad
|z|_i\rightarrow\infty.
\]
\end{lem}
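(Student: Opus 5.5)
Write $w:=\widetilde\pi z$. The plan is to use the product formula for the Carlitz exponential, $\exp_\ccal(w)=w\prod_{0\neq a\in A}\bigl(1-\frac{w}{\widetilde\pi a}\bigr)$. This gives
\[
\widetilde\pi\,\exp_\ccal(\widetilde\pi z)^{-1}=\frac{1}{z}\prod_{0\neq a\in A}\Bigl(1-\frac{z}{a}\Bigr)^{-1}.
\]
The statement then reduces to the lower bound
\[
\Bigl|z\prod_{a\neq0}\bigl(1-z/a\bigr)\Bigr|_\infty\ge |z|_i ,
\]
and we will prove this by computing each factor's absolute value.

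\textbf{Step 1: each factor is at least $1$.} By definition of $|z|_i$ as an infimum over $F_\infty\supset A$, we have $|z-a|_\infty\ge|z|_i$ for every $a\in A$; in particular $|z|_\infty\ge|z|_i$, since $0\in A$. For a factor with $a\neq 0$ we write $|1-z/a|_\infty=|z-a|_\infty/|a|_\infty$.
\begin{itemize}
\item If $|a|_\infty>|z|_\infty$, the ultrametric equality gives $|z-a|_\infty=|a|_\infty$, so the factor equals $1$.
\item If $|a|_\infty<|z|_\infty$, then $|z-a|_\infty=|z|_\infty$ and the factor equals $|z|_\infty/|a|_\infty>1$.
\item If $|a|_\infty=|z|_\infty$, we only know $|z-a|_\infty\ge|z|_i$, so the factor is at least $|z|_i/|z|_\infty$, which may be less than $1$.
\end{itemize}

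\textbf{Step 2: reduce to the case $|z|_\infty=|z|_i$.} Translation invariance handles this. Since $\exp_\ccal$ is $\widetilde\pi A$-periodic and $|z+b|_i=|z|_i$ for $b\in A$, we may replace $z$ by $z-b$. Write $z=x+y$ with $x\in F_\infty$ such that $|y|_\infty=|z|_i$; this is possible because the infimum is attained. Let $b$ be the polynomial part of $x$.
\begin{itemize}
\item If $|z|_i\ge 1$, then $|x-b|_\infty<1\le |z|_i$, so $|z-b|_\infty=|z|_i$.
\item If $|z|_i<1$, we cannot reach $|z-b|_\infty=|z|_i$ this way. Instead we argue directly: after the shift $|z|_\infty<1$, so every factor has $|a|_\infty\ge1>|z|_\infty$ and equals $1$. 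The product is then $|z|_\infty\ge|z|_i$, which is what we need.
\end{itemize}

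\textbf{Step 3: conclude.} When $|z|_\infty=|z|_i$, the bound from Step 1 for the middle case becomes $|z-a|_\infty\ge|z|_i=|a|_\infty$. So all factors are at least $1$, and the product is at least $|z|_\infty=|z|_i$.

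The "in particular" statement is then immediate.

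The only subtle point, which I expect to be the main obstacle, is the factors with $|a|_\infty=|z|_\infty$ in the middle case of Step 1. The translation in Step 2 is exactly what controls them, together with a justification that the infinite product's absolute value equals the product of the factors' absolute values. The latter holds because all but finitely many factors have absolute value exactly $1$.
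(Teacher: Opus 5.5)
Your proof is correct, but it takes a different route from the paper's.

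\textbf{The paper's route.} The paper never expands the product. It writes $\widetilde\pi\,\exp_{\ccal}(\widetilde\pi z)^{-1}=1/\exp_A(z)$ and invokes Gekeler's partial-fraction identity $1/\exp_A(z)=\sum_{a\in A}(z-a)^{-1}$, which is the logarithmic derivative of the product since $\exp_A'=1$. Each term has absolute value at most $|z|_i^{-1}$ because $A\subset F_\infty$. The ultrametric inequality then gives the bound in one line, uniformly for all $z\in\Omega$. It needs no translation by $A$, no attainment of the infimum defining $|z|_i$, and no split into the cases $|z|_i\ge 1$ and $|z|_i<1$.

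\textbf{Your route.} You work only from the product definition of $\exp_\Lambda$ recalled in Section~2. The cost is the reduction to $|z|_\infty=|z|_i$ via periodicity and the factor-by-factor case analysis. Your justification that the absolute value of the infinite product equals the product of the absolute values (all but finitely many factors have absolute value $1$) is exactly what that step needs.

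\textbf{What your route buys.} It proves more than the lemma. Once $|z|_\infty=|z|_i$, the factors with $|a|_\infty=|z|_\infty$ are exactly $1$, since $|z|_i\le|z-a|_\infty\le|z|_\infty$. This gives the exact formula $|\exp_A(z)|_\infty=|z|_\infty\prod_{0<|a|_\infty<|z|_\infty}|z|_\infty/|a|_\infty$. It shows that $\log_q|\exp_A(z)|_\infty$ grows linearly in $|z|_i$ on $\calf$. That is the exponential decay of $Q$ at the cusp which the paper later imports from Gekeler's Lemma~5.5. The partial-fraction argument gives only the lower bound $|\exp_A(z)|_\infty\ge|z|_i$.
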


\begin{proof}
By \cite[(2.2.v)]{gekelerj}, we have
\[
\frac{1}{\exp_A(w)}
=
\sum_{a\in A}\frac{1}{w-a}
\qquad
\text{for }
w\notin A.
\]
Since $\exp_{\ccal}$ is the exponential associated with the lattice
$\widetilde\pi A$, the scaling relation for lattice exponentials gives
$\exp_{\ccal}(\widetilde\pi z)=
\widetilde\pi\,\exp_A(z)$.
Thus
\[
\widetilde\pi\,
\exp_{\ccal}(\widetilde\pi z)^{-1}
=
\frac{1}{\exp_A(z)}
=
\sum_{a\in A}\frac{1}{z-a}.
\]
For every $a\in A\subset F_\infty$, we have
$|z-a|_\infty\ge |z|_i$,
and hence
$\left|\frac{1}{z-a}\right|_\infty
\le |z|_i^{-1}$.
The ultrametric inequality therefore gives
\[
\left|
\sum_{a\in A}\frac{1}{z-a}
\right|_\infty
\le
\sup_{a\in A}
\left|\frac{1}{z-a}\right|_\infty
\le
|z|_i^{-1}.
\]
\end{proof}

\subsubsection{Drinfeld modular curves}

Let $N$ be a non-zero ideal of $A$ and let $N^{-1}$ denote its inverse fractional ideal in $F$. A {\em level $N$ structure} on a Drinfeld module $\cald$ of rank $2$ over $L$ is a $A/N$-module morphism
\[
\alpha\colon \left(N^{-1}/A\right)^2\rightarrow \cald(L)
\]
such that, as divisors over $\G_{a}$, we have $\cald[N]=\sum_n[\alpha(n)]$, for $n\in\left(N^{-1}/A\right)^2$.

As in \cite{drinfellipt}, consider the moduli problem associating, to a $A$-scheme $S$, the isomorphism classes of the pairs $(\cald,\alpha)$. If $N$ is divisible by at least $2$ primes, then this functor is representable by a smooth affine $A$-scheme $Y(N)$ of finite type and (relative) dimension $1$. Without any restriction on $N$, we obtain a coarse moduli space. In particular, for $N=(1)$, i.e., without level structure, we denote by $Y(1)$ the coarse moduli space whose points parametrize isomorphism class of Drinfeld modules of rank $2$.
\\

Let us recall that, for any scheme $X$ of locally finite type over a complete, non-archimedian field $L$ of characteristic $p$, there is a rigid analytic space whose $L$-points coincide with the $L$-points of $X$.  In particular, the rigid analytifications of $Y(N)$'s are isomorphic to $\Omega/\Gamma$ for $\Gamma$ a congruence subgroup of $\gl_2(A)$.
These curves can be compactified adding a finite set of cusps given by $\p^1(F)/\Gamma$, and we denote their compactification by $X(N)$.
Intuitively, as noticed in \cite[Remark 2.16]{bpr}, for $z\in Y(1)$, we have that $|z|_i$ tends to infinity as one gets closer to the cusp.

As usual, we denote by $\Gamma_0(N)$ the subgroup of $\gl_2(A)$ of matrices whose first entry in the bottom row is a multiple of $N$.

\subsubsection{The $j$-invariant}\label{subsec:j-invariant}

Following \cite[3, p.47]{gekeler}, we recall that a {\em Drinfeld modular form of weight $k\ge 1$} for $\gl_2(A)$ is a rigid analytic function
$f\colon\Omega\rightarrow \C_\infty$
such that:
\begin{itemize}
    \item $f(\gamma z)=(cz+d)^kf(z)$,  where $(c,d)$ is the bottom row of $\gamma\in\gl_2(A)$;

    \item $f(z)$ is holomorphic at the cusp of $\gl_2(A)$.
\end{itemize}

Let $z\in\Omega$ and let $\Lambda_z=A+zA$. Prototypically, for any positive integer $k$ divisible by $q-1$, we define the Eisenstein series $E_k(z)=\sum_{\lambda\in \Lambda_z -\{0\}}\lambda^{-k}$, which is a Drinfeld modular form of weight $k$.
Let also $\cald_z$ be the Drinfeld module of rank $2$ associated to $\Lambda_z$, which is uniquely determined by
\[
\varphi_T=\iota(T) + g(z)\uptau + \Delta(z)\uptau^2,
\]
for $\Delta(z)\in\C_\infty^\times$. In particular, $g$ and $\Delta$ are Drinfeld modular forms of weight $q-1$ and $q^2-1$, respectively. Moreover, $\Delta$ is nowhere vanishing on $\Omega$. We then define the {\em $j$-invariant} of $\cald$ as
\[
j(z)=\dfrac{g(z)^{q+1}}{\Delta(z)}.
\]
Let $h:=P_{q+1,1}$ be Gekeler's Poincar\'e series of weight
$q+1$ and type $1$. Gekeler proved that the graded
$\C_\infty$-algebra of Drinfeld modular forms of all types for
$\gl_2(A)$ is
$M(\gl_2(A))=\C_\infty[g,h]$, see
\cite[Theorem~5.13]{gekelerj}. With Gekeler's normalized discriminant
$\Delta_{\mathrm{Gek}}:=\widetilde{\pi}^{\,1-q^2}\Delta$
one has
\[
\Delta_{\mathrm{Gek}}=-h^{q-1}
\]
by \cite[Theorem~9.1]{gekelerj}. Equivalently, with our convention that
$\cald_z$ is attached to the lattice $\Lambda_z=A+zA$, so that
$\varphi_T=T+g(z)\uptau+\Delta(z)\uptau^2$,
the discriminant coefficient satisfies
\[
\Delta(z)=-\widetilde{\pi}^{\,q^2-1}h(z)^{q-1}
\]
see \cite[Corollary~6.3]{gekelerquasi}. In particular, $h$ is nowhere vanishing on $\Omega$.

Note that $j$ is a modular function on $\Omega$ of weight $0$ but not a modular form, as it is not holomorphic at the cusp. However, since $g$ and $\Delta$ can be written as $A$-linear combinations of $E_{q-1}$ and $E_{q^2-1}$, and Eisenstein series are analytic, it follows that $j$ is analytic as well. In particular, the $j$-function has the following $Q$-expansion at infinity
\begin{equation}\label{jexpansion}
j(z)=\dfrac{1}{Q^{q-1}}+r(Q)
\end{equation}
where $r(Q)\in A[\![Q]\!]$ (see \cite[Section 6]{gekelerj}).

Two elements of $\Omega$ determine the same lattice if and only if they belong to the same $\gl_2(A)$-orbit. Since homothety classes of $A$-lattices of rank $2$ are in bijection with $\gl_2(A)\backslash \Omega$, and $j$ is $\gl_2(A)$-invariant, we have that $j$ descends to the rigid analytic isomorphism
\[
\gl_2(A)\backslash \Omega\simeq \A^1_{\C_\infty}.
\]
For $z_1,z_2\in\Omega$, let $\sigma\in\gl_2(F)$ be such that there exists a cyclic isogeny  between the Drinfeld modules corresponding to $z_1$ and $z_2$, i.e., $z_1=\sigma z_2$. The degree of such an isogeny is given by $N:=\det(a\sigma)$, where $a\in A$ such that the entries of $a\sigma$ are in $A$ with no common factors.
The image of the rigid analytic map
\begin{equation}\label{e:modular-corr}
    \varrho\colon\Omega\rightarrow \A^2_{\C_\infty},\qquad z\mapsto (j(\sigma_1 z),j(\sigma_2 z))
\end{equation}
is a {\em pure modular curve} $Y_0'(N)$, which is indeed the image of $Y_0(N)$ in $\A^2_{\C_\infty}$, where $N$ is the degree of $\sigma_2\sigma_1^{-1}$. This image can be described as the zero locus of the irreducible, symmetric, {\em modular} polynomial $\Phi_N(X,Y)\in A[X,Y]$ whose roots are the $j$-invariants of Drinfeld modules of $\rank$-$2$ linked by a cyclic isogeny of degree $N$ (see \cite[Theorem 3.2]{bae}).
\\Note that in general a pure modular curve is birational to $Y_0(N)$ but not smooth.

\subsubsection{CM Drinfeld modules}

 Let $\cald$ be a Drinfeld $A$-module of $\rank$-$2$ over $\C_\infty$. Its endomorphism ring $\End(\cald)$ is the centralizer of $\varphi(A)$ in $\C_\infty\{\uptau\}$. Let $K$ be a quadratic extension of $F$ such that there are no embeddings of $K$ into $F_\infty$, that is $K$ is a CM (i.e., {\em imaginary} quadratic) extension of $F$. Equivalently, one can ask that there is a unique place $\widetilde{\infty}$ of $K$ above the place $\infty$ of $F$.
 We say that $\cald$ has {\em CM by $K$} whenever $\End(\cald)$ is an order $\calo_c$ of conductor $c$ in $K$, i.e., $\calo_c \simeq A+c\calo_K$.

If $q$ is odd, then $K=F(\sqrt{D})$ for $D\in A$ square-free.
On the other hand, the case of even characteristic presents some differences, which we recall, following \cite[Section 2.2.2]{angles} and the references therein. If $q=2^s$, for some positive integer $s$, there is exactly one inseparable quadratic extension, namely $F_{2^s}(\sqrt{T})$.
\\
Let now $K/F$ be such a separable quadratic extension in even characteristic. Then there exists $\xi\in F^{\text{alg}}$ such that $K=F(\xi)$ and which satisfies the Artin--Schreier equation
\[
\xi^2+\xi=f
\]
for $f\in F$. For $f=\frac{B}{C}$, with $B,C\in A$ we say that $f$ is {\em normalized} if:
\begin{enumerate}
\item $C$ monic, $B$ and $C$ coprime, and $\deg B\ge \deg C$;
\item $C=\prod_{i=1}^r P_i^{e_i}$ with $P_1,\dots,P_r\in A$ distinct monic irreducibles and
$e_i\equiv 1\pmod 2$ for all $i$;
\item either
\begin{enumerate}
\item[(i)] $\deg B>\deg C$ and $\deg B-\deg C\equiv 1\pmod 2$ (then $\infty$ ramifies in $K/F$), or
\item[(ii)] $\deg B=\deg C$ and $X^2+X+\text{sgn}(B)$ is irreducible in $\F_q[X]$ (then $\infty$ is inert in $K/F$).
\end{enumerate}
\end{enumerate}

and $C$ of the form $C=\prod_{i=1}^r P_i^{e_i}$,
for $P_i\in A$  distinct monic irreducible and  $e_i$ odd.
\\
Moreover, it is well known that $K=K'$ if and only if $\xi'=\xi+g$ for some $g\in F$; then $(\xi')^2+\xi'=f+(g^2+g)$, so $f'-f=g^2+g$.
Set
\[
G:=\prod_{i=1}^r P_i^{n_i}.
\]
where $n_i$ are the smallest integers such that $n_i>e_i/2$. Then the maximal order of $K$ is given by
\[
\calo_K \;=\; A\times AG\xi,
\]
and its discriminant  is $D_K:=G^2$. Thus the unique order $\calo_c$ in $K$ of conductor $c$ is given by $\calo_c=A\oplus cG\xi A$, and its discriminant is $D_c:=c^2G^2=c^2D_K$. Unlike the odd characteristic case, the discriminant does not uniquely determine the quadratic field $K$. Nonetheless, for $K$ fixed, an order is uniquely determined by its discriminant. Let $\text{rad}(G):=\prod_{i=1}^rP_i$. With this fixed choice of $\xi$, define
\[
S_c(\xi):=
\left\{
\frac{b+cG\xi}{a}\ \middle|\
\begin{array}{l}
a,b\in A,\ a\ \text{monic},\ \exists\,\gamma\in A\ \text{with } a\gamma=b^2+bcG+c^2\text{rad}(G)B,\\[2pt]
|b|_\infty<|a|_\infty\le |\gamma|_\infty,\ (a,b,c)=1
\end{array}
\right\}.
\]
By \cite[Lemma~2.3(1)]{angles}, we have $S_c(\xi)\subset\calf$.
\\

Note that, as $\C_\infty$ has infinite degree over $F_\infty$, the action of $\gl_2(F_\infty)$ on $\Omega$ is not transitive. We say that $z\in\Omega$ is {\em quadratic} if $F_\infty(z)$ has degree $2$ over $F_\infty$. Thus we see that the Drinfeld module associated to $z$ has CM if and only if $[F(z):F]=2$. By the CM theory for Drinfeld modules (as, for instance, in \cite[Theorem 7.5.19]{pap}), we  have that $j(z)$, for $z$ quadratic, is integral over $A$ and $K(j(z))$ is the Hilbert class field of $K$, i.e., the maximal unramified extension of $K$ in $\C_\infty$ in which $\widetilde{\infty}$ splits completely. Moreover we also have
 \begin{equation}\label{e:cmorbit}
 \Gal\big(K(j(z))/K\big)\simeq \pic(\calo_c)\ ,
 \end{equation}
 see \cite[Lemma~2.1]{angles}. In particular, there is an effective Siegel estimate of the class number which holds in every characteristic. Namely, as discussed in \cite[Section~3.1]{breuerAO}, there exist two effectively computable positive constants $B_\varepsilon$ and $C_\varepsilon$ such that
 \begin{equation}\label{siegelclassnumber}
     B_\varepsilon\big|D_c\big|_\infty^{1/2-\varepsilon}\le \#\pic(\calo_c)\le C_\varepsilon\big|D_c\big|_\infty^{1/2+\varepsilon}
 \end{equation}
for every $\varepsilon>0$. For an explicit upper bound, see \cite[Proposition 2.11]{angles}.

\begin{lem}\label{l:heightfundamentaldomain}
    Let $K$ be an imaginary quadratic extension of $F$ and let $z$ be a quadratic point in the quadratic fundamental domain $\calf_K$. Then:
    \begin{enumerate}
    \item if $q$ is odd or $q$ even with $\infty$ inert in $K$, one has $H(z)\le |D_c|_\infty^{1/2}$;
    \item if $q$ is even with $\infty$ ramified in $K$, one has $H(z)\le |D_c|_\infty^{1/2}|\xi|_\infty$.
\end{enumerate}

\end{lem}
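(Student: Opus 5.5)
Since $z$ has CM by the order $\calo_c$, it satisfies a primitive quadratic equation over $A$, and its height can be read off from that equation; the reduction conditions defining $\calf$ then bound the coefficients.

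\emph{Odd $q$.} Write $z=\frac{-b+\sqrt{D_c}}{2a}$, where $aX^2+bX+\gamma$ with $a,b,\gamma\in A$, $(a,b,\gamma)=1$, $a$ monic, is the minimal polynomial of $z$ over $A$, and $b^2-4a\gamma=D_c$ (up to a unit). The Weil height of a quadratic point is then controlled by the coefficients. Since $z$ is integral over $A[1/a]$ and its conjugate is $\sigma(z)$, one computes
\[
H(z)^2=H(z)H(\sigma z)\le \max\{|a|_\infty,|b|_\infty,|\gamma|_\infty\}.
\]
Indeed, at a finite place $v$ the local contributions of $z$ and $\sigma(z)$ multiply to $|a|_v^{-1}$ times the local contribution of $(a,b,\gamma)$, which is $1$ by primitivity. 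At the unique place above $\infty$, we use $|z|_\infty=|\sigma z|_\infty$ and $|z\sigma(z)|_\infty=|\gamma/a|_\infty$. The height on $\p^1$ of the point $[1:z]$ is thus $\max\{|a|,|b|,|\gamma|\}^{1/2}$, up to the normalization by the degree $2$.

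Next I would use that $z\in\calf$, i.e.\ $|z|_\infty=|z|_i\ge1$. Together with $\sigma(z)=\bar z$, this gives the reduced-form inequalities $|b|_\infty<|a|_\infty\le|\gamma|_\infty$, as in the definition of $S_c(\xi)$ and \cite[Lemma~2.3]{angles}. So $H(z)^2\le|\gamma|_\infty$. From $|b|<|a|\le|\gamma|$ we get $|b^2|_\infty<|a\gamma|_\infty$, hence $|D_c|_\infty=|4a\gamma|_\infty=|a\gamma|_\infty\ge|\gamma|_\infty$, using $|4|_\infty=1$ for odd $q$. This yields $H(z)\le|D_c|_\infty^{1/2}$.

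\emph{Even $q$.} Here $z=\frac{b+cG\xi}{a}\in S_c(\xi)$, with minimal relation $a z^2+\dots$; I would use the equation $a\gamma=b^2+bcG+c^2\mathrm{rad}(G)B$ recorded in the definition of $S_c(\xi)$. The same computation gives $H(z)^2\le|\gamma|_\infty$ after clearing denominators. Now $|\gamma|_\infty\le|a\gamma|_\infty=|b^2+bcG+c^2\mathrm{rad}(G)B|_\infty$, which I would bound by the ultrametric inequality. The terms $b^2$ and $bcG$ are at most $|a\gamma|$-controlled, and $|cG|^2=|D_c|$. The term $c^2\mathrm{rad}(G)B$ has size $|D_c|\cdot|\mathrm{rad}(G)B/G^2|_\infty$.

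This last ratio is compared with $|\xi|_\infty^2=|f|_\infty=|B/C|_\infty$, using $G^2/\mathrm{rad}(G)\ge C$ from $n_i>e_i/2$. In the inert case, $|B|=|C|$ gives $|\xi|_\infty=1$, so the factor is harmless and we recover (1). In the ramified case, $|\xi|_\infty^2=|B/C|_\infty>1$, and this produces the extra factor $|\xi|_\infty$ in (2).

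The main obstacle is the careful local height bookkeeping, in particular at $\infty$ in the ramified even case. There I would check explicitly that $|b|<|a|\le|\gamma|$ prevents cancellation from making $|\gamma|$ larger than $|D_c||\xi|^2$.
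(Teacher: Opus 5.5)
Your route matches the paper's even-$q$ proof; for odd $q$ the paper just cites \cite[Lemma~5.3]{ran}. You read $H(z)^2$ off the primitive minimal polynomial: since there is one place above $\infty$ and $|z|_\infty\ge1$, one gets $H(z)^2=|a|_\infty|z|_\infty^2=|\gamma|_\infty$, and then you bound the constant term. Your even-$q$ sketch is the paper's argument on $S_c(\xi)$.

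The gap is in your odd case, where you assert that $z\in\calf$ forces $|b|_\infty<|a|_\infty\le|\gamma|_\infty$. \cite[Lemma~2.3]{angles} gives only the inclusion $S_c(\xi)\subset\calf$, and the converse is false. Indeed, $\calf$ is stable under $z\mapsto z+c$ for every $c\in A$ with $|c|_\infty\le|z|_\infty$ (since $|z+c|_i=|z|_i=|z|_\infty\ge|z+c|_\infty$), and this replaces $b$ by $b-2ac$. For instance, let $\epsilon\in\F_q^\times$ be a non-square and $K=F(\sqrt{\epsilon})$. Then $z=T+T\sqrt{\epsilon}$ lies in $\calf_K$, but its minimal polynomial $X^2-2TX+(1-\epsilon)T^2$ has $|b|_\infty=q>1=|a|_\infty$, and even $|b^2|_\infty=|a\gamma|_\infty$. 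So your step ``$|b^2|_\infty<|a\gamma|_\infty$, hence $|D_c|_\infty=|a\gamma|_\infty\ge|\gamma|_\infty$'' is unjustified for a general point of $\calf_K$. That general case is what the lemma asserts, and it is what the proof of Theorem~\ref{t:ao} needs, since its representatives are arbitrary points of $\calf_{K_1}\times\calf_{K_2}$.

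The repair is to apply the defining inequality of $\calf$ at the ``real part''. Set $x:=-b/(2a)\in F$. Then $z-x=\sqrt{D_c}/(2a)$ and $|z-x|_\infty\ge|z|_i=|z|_\infty$, hence $|x|_\infty\le\max\{|z|_\infty,|z-x|_\infty\}=|z-x|_\infty$, i.e.\ $|b|_\infty^2\le|D_c|_\infty$. It follows that $|4a\gamma|_\infty=|b^2-D_c|_\infty\le|D_c|_\infty$. Hence $H(z)^2=|\gamma|_\infty\le|a\gamma|_\infty\le|D_c|_\infty$ for every $z\in\calf_K$, with no reduced-form hypothesis.

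In characteristic $2$ the same trick with $x:=b/a$ gives $|b|_\infty\le|cG\xi|_\infty$ for every $z=(b+cG\xi)/a\in\calf_K$. This is exactly the bound your phrase ``$b^2$ and $bcG$ are $|a\gamma|$-controlled'' leaves unstated; the paper gets it on $S_c(\xi)$ from $|b|_\infty<|a|_\infty\le|b+cG\xi|_\infty$. With it, $|a\gamma|_\infty\le\max\{|b|_\infty^2,\,|b|_\infty|cG|_\infty,\,|cG|_\infty^2|B/C|_\infty\}\le|D_c|_\infty|\xi|_\infty^2$. This also removes the restriction to $S_c(\xi)$ that both you and the paper impose.

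Your closing worry about cancellation is misplaced. The ultrametric inequality bounds $|a\gamma|_\infty$ from above, so cancellation can only help; what must be controlled is the size of $b$.
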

\begin{proof}
    For $q$ odd, this was showed in \cite[Lemma 5.3]{ran}.
Let therefore assume $q=2^s$ for a positive integer $s$. If $\alpha$ is quadratic over $F$ with conjugate $\bar\alpha$ and primitive minimal polynomial
$aX^2+bX+c\in A[X]$, then, by \cite[Lemma 3.7]{ran} for the logarithmic height
\begin{equation}\label{eq:quadheight}
2h(\alpha)=\log_q|a|_\infty+\log_q^+|\alpha|_\infty+\log_q^+|\bar\alpha|_\infty.
\end{equation}
Since $K/F$ is imaginary, there is a unique place $\widetilde\infty\mid\infty$, hence  $|\bar\alpha|_\infty=|\alpha|_\infty$. In particular, for $\alpha\in\calf_K\subset\calf$ we have
$|\alpha|_\infty\ge 1$ and \eqref{eq:quadheight} becomes
\begin{equation}\label{eq:quadheight_simpl}
2h(\alpha)=\log_q\bigl(|a|_\infty\,|\alpha\bar\alpha|_\infty\bigr)=\log_q|c|_\infty
\end{equation}
using $\alpha\bar\alpha=c/a$.

Write $K=F(\xi)$ with $\xi^2+\xi=B/C$ in its normalized form,
and let $G$ be the associated
polynomial, so that $D_c=c^2G^2$.
Let $z\in S_c(\xi)$, so $z=(b+cG\xi)/a$ with $a\in A^+$, $b\in A$ and $|b|_\infty<|a|_\infty$. Let $\bar\xi=\xi+1$ and $\bar z=(b+cG\bar\xi)/a$; then $\bar z$ is the Galois conjugate of $z$ and
$|z|_\infty=|\bar z|_\infty\ge 1$. We have that $z$ satisfies
\[
(aX-b)^2+(aX-b)\,cG+c^2\text{rad}(G)B=0,
\]
hence its primitive quadratic polynomial in $A[X]$ has leading coefficient $a^2$ and constant term $a\gamma=b^2+bcG+c^2\text{rad}(G)B$ with $\gamma\in A$, as in the definition of $S_c(\xi)$.
Applying \eqref{eq:quadheight_simpl} to this polynomial gives $2h(z)=\log_q|a\gamma|_\infty$. By ultrametricity and $|b|_\infty<|a|_\infty$ we have $|b|_\infty\le |cG|_\infty\cdot|\xi|_\infty$, as
$|z|_\infty\ge 1$ forces $|a|_\infty\le |b+cG\xi|_\infty=|cG\xi|_\infty$. Hence
\[
|a\gamma|_\infty
\le \max\{|b|_\infty^2,\ |b|_\infty|cG|_\infty,\ |c^2\text{rad}(G)B|_\infty\}
\le |cG|_\infty^2\,|\xi|_\infty^2,
\]
using $|c^2\text{rad}(G)B|_\infty=|cG|_\infty^2\,|B/C|_\infty$ and $|\xi|_\infty^2=\max\{1,|B/C|_\infty\}$.
Therefore
$$H(z)=q^{h(z)}\le |cG|_\infty\,|\xi|_\infty=|D_c|_\infty^{1/2}\,|\xi|_\infty.$$
If $\infty$ is inert in $K$ then $|B/C|_\infty=1$, so $|\xi|_\infty=1$ and $H(z)\le |D_c|_\infty^{1/2}$.
\end{proof}
\subsubsection{Taguchi height}

Following~\cite{tag} and~\cite{wei}, we recall the Drinfeld module analogue of the Faltings height, called {\em Taguchi} height, which thus measures the arithmetic complexity of points in the moduli space of Drinfeld modules.
\\

We begin by recalling the original construction of the Taguchi height, as in \cite[Sections 2 and 5]{tag}.
\\Let $S$ be an integral normal scheme of finite type over $A$ with function
field $L$, and let $\cald$ be a Drinfeld $A$-module over $L$.  A {\em model} $\ncal$ of $\cald$ over $S$ is an $A$-module scheme $(N,\varphi)$ over $S$ with a morphism
\[
f\colon \ncal\times_S\Spec L\rightarrow\cald
\]
which is an isomorphism of Drinfeld
modules over $L$. A model $\ncal$ over $S$ of $\cald$ is {\em minimal} if, for any other model $\calm=(M,\varphi',f')$,
there exists a unique morphism $\calm\rightarrow \ncal$ inducing an isomorphism on the generic fiber, which is required to be compatible with $f$ and $f'$. Rephrasing \cite[Proposition 2.2]{tag}, if $S$ is a factorial Noetherian scheme then the minimal model exists and is unique up to isomorphism.

A {\em metrized line bundle}  on $\Spec \calo_L$ is a pair $(\call,\|\cdot\|)$ consisting of:
\begin{itemize}
\item a projective $\calo_L$-module $\call$ of $\rank$-$1$;
\item for every place $\widetilde{\infty}\mid\infty$, a norm
\[
\|\cdot\|_{\widetilde{\infty}}\colon \call \otimes_{\calo_L} L_{\widetilde{\infty}} \rightarrow \R.
\]
\end{itemize}
The {\em degree} $\deg(\call,\|\cdot\|)$ of a metrized line bundle on $\calo_L$ is then defined as
\[
\deg(\call,\|\cdot\|)
:=
\log_q \#(\call/\ell \calo_L)
\;-\;
\sum_{\widetilde{\infty}\mid\infty} n_{\widetilde{\infty}} \log_q \|\ell\|_{\widetilde{\infty}},
\]
for some $\ell\in \call-\{0\}$, where $n_{\widetilde{\infty}}$ is the local degree at $\widetilde{\infty}$. By the product formula
it is independent of the choice of $\ell$.

Let now $\cald$ be a Drinfeld $A$-module of $\rank$-$2$ over $L$ and let $\ncal = (N,\varphi,f)$
be its minimal model over $\calo_L$. Let $e$ denote its unit section, and set $$\omega_{\ncal/\calo_L}:= e^*\bigl(\Omega^1_{\ncal/\calo_L}\bigr).$$

For a place $\widetilde{\infty}$ of $L$ such that $\widetilde{\infty} | \infty$, via the embedding $\widetilde{\infty}\colon L\hookrightarrow\C_\infty$ we denote by $\Lambda_{\widetilde{\infty}}$ the $A$-lattice in $\C_\infty$ associated to $\cald$ and by $D_A(\Lambda_{\widetilde{\infty}})$ the lattice discriminant of $\Lambda_{\widetilde{\infty}}$ (as an $A$-lattice in $\C_\infty$, see \cite[pp.1064-1065]{wei} for the detailed construction).
Let  $\cald_{\widetilde{\infty}}$ be the Drinfeld module over $L_{\widetilde{\infty}}$ obtained
by extension of scalars $\calo_L\to L_{\widetilde{\infty}}$, with corresponding lattice $\Lambda_{\widetilde{\infty}}$. For $X$  the coordinate function of $\G_{a,L_{\widetilde{\infty}}}$, then $\text{d}X$ is a generator of
$\omega_{\ncal/L_{\widetilde{\infty}}}$, on which one defines the following metric
\[
\|\text{d}X\|_{\widetilde{\infty}} := D_A(\Lambda_{\widetilde{\infty}}).
\]

Let $\cald$ be a Drinfeld $A$-module of rank $2$ over $L$ and
$\ncal$  its minimal model over $\calo_L$.
The \emph{Taguchi height} of $\cald$ over $L$ is
\begin{equation}\label{e:tagdef}
h_{\mathrm{Tag}}(\cald)
\;:=\;
\frac{1}{[L:F]}
\deg\bigl(\omega_{\ncal/\calo_L},\|\cdot\|\bigr).
\end{equation}

 Following~\cite[Section 4]{wei}, we give an equivalent description of the Taguchi height in terms of local contributions. For each place $\widetilde{\infty}$ of $L$ such that $\widetilde{\infty}\nmid\infty$, denote by $\F_w$ the residue field of $\calo_L$  at $w$.
 Set
\[
\ord_w(\varphi_a)\;:=\;\min_{1\le i\le2}\Bigl\{\ord_w(g_{i,a})/(q^i-1)\Bigr\},
\qquad
\ord_w(\cald)\;:=\;\min_{a\in A-\{0\}}\ord_w(\varphi_a).
\]

The  {\em local Taguchi height} at $w$ of $\cald$ is defined as
\[
h_{\text{Tag},w}(\cald/L):=-[\F_w:\F_q]\cdot \lfloor{\ord_w(\cald)}\rfloor.
\]
On the other hand, for a place $\widetilde{\infty}$ of $L$ such that $\widetilde{\infty} | \infty$, we then set
\[
h_{\text{Tag},\widetilde{\infty}}(\cald/L):=-[L_{\widetilde{\infty}}:F_\infty]\cdot \log_q D_A(\Lambda_{\widetilde{\infty}}).
\]
As in \cite{wei}, the  Taguchi height of $\cald$ over $L$ is then
\[
h_{\text{Tag}}(\cald/L):=\dfrac{1}{[L:F]}\left( \sum_{w\nmid \infty}h_{\text{Tag},w}(\cald/L)+\sum_{\widetilde{\infty}|\infty}h_{\text{Tag},\widetilde{\infty}}(\cald/L) \right).
\]

A Drinfeld module $\cald$  over $L$ has {\em stable reduction} at a place $v$ if it is isomorphic to a Drinfeld module  defined over the valuation ring  $v$ whose reduction modulo the maximal ideal of the valuation ring is a Drinfeld module of $\rank>0$ over the residue field at $v$. Note that if $\cald$ has stable reduction at $w$, then $\ord_w(\cald)$ is an integer (see \cite[Remark 4.2.(1)]{weinventiones}).
\\As every Drinfeld module has everywhere potentially stable reduction (possibly after a finite extension, see \cite[Lemme~2.10]{daviddenis}), we define the {\em stable} Taguchi height as
\[
\widetilde{h}_{\text{Tag}}(\cald):=\dfrac{-\ln q}{[L:F]}\left(\sum_{w\nmid \infty}[\F_w:\F_q]\ord_w(\cald)+\sum_{\widetilde{\infty}|\infty}[L_{\widetilde{\infty}}:F_\infty]\log_q D_A(\Lambda_{\widetilde{\infty}}) \right).
\]
We remark that the stable Taguchi height is invariant under $F^{\text{alg}}$-isomorphism Drinfeld modules.

\subsubsection{Upper bound for the height of a CM Drinfeld module}
We now consider the case of a CM extension $K$ of $F$, and let $\cald$ have CM by $\calo_c$.
Consider the following partial zeta function
\[
\zeta_{\calo_c}(s)=\sum_{\gotn}\#\big(\calo_c/\gotn\big)^{-s}
\]
where $\gotn$ is an invertible ideal of $\calo_c$. Let
\begin{equation}\label{e:weidiff}
    \mathfrak d(\calo_c):=c^2\cdot\prod_{\substack{\text{prime}\ p\in A \\\text{ramified in}\ K}} p\;.
\end{equation}
In \cite[Corollary 4.5]{wei} Wei proved that, for the stable Taguchi height of a CM Drinfeld $A$-module of $\rank$-$2$, the following  Colmez-type formula
\begin{equation}\label{colmez}
    \widetilde{h}_{\text{Tag}}(\cald)=-\dfrac{1}{4}\ln|\mathfrak d(\calo_c)|_\infty-\dfrac{1}{2}\dfrac{\zeta'_{\calo_c}(0)}{\zeta_{\calo_c}(0)}
\end{equation}
holds. If $q$ is odd, then $\mathfrak d(\calo_c/A)=D_K\cdot c^2$, i.e., is the discriminant of $\calo_c$. On the other hand, for $q$ even we have what follows. Let $K=F(\xi)$ with an Artin--Schreier equation
$\xi^2+\xi=f=\frac{B}{C}$ such that $B,C\in A=\F_q[T]$ with  $(B,C)=1$, where $f$ is in reduced form and
\[
C=\prod_{i=1}^r P_i^{e_i}\]
for $P_i$ distinct and irreducible and $e_i$ odd.

We recall that, for a finite prime $P\in A$, one has that $P$ ramifies in $K$ if and only if $v_P(f)<0$, that is, $P$ divides $C$.
This shows that the finite ramified primes are exactly $\{P_1,\dots,P_r\}$ and therefore
\[
\prod_{\substack{p\in A\\ \mathrm{ramified }}}p=\prod_{i=1}^r P_i
\]
Recall we define $G:=\prod_{i=1}^r P_i^{n_i}$ with $n_i=(e_i+1)/2$, so that $G^2=\prod_{i=1}^r P_i^{e_i+1}$.
which has the same prime support $\{P_1,\dots,P_r\}$ as the squarefree product above.
Consequently, for the order $\calo_c$ one has
\[
\mathfrak d(\calo_c)=c^2\prod_{i=1}^r P_i
\;\;\;\;\text{and}\;\;\;\;
D_c=c^2G^2=c^2\prod_{i=1}^r P_i^{e_i+1}
\]
so $\mathfrak d(\calo_c)$ divides $D_c$ and hence $|\mathfrak d(\calo_c)|_\infty\le |D_c|_\infty$.
\\

The next Lemma is the Drinfeld module version of \cite[Corollary~3.3]{tsimao}.
\begin{lem}\label{colmezbound}
Let $\cald$ be a Drinfeld $A$-module over $F^{\text{alg}}$ with CM by $\calo_c$. Then
    \[
    \widetilde{h}_{\text{Tag}}(\cald)\le C_\varepsilon |D_c|_\infty^{\varepsilon}
    \]
    for an effectively computable positive constant $C_\varepsilon$.
\end{lem}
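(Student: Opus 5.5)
Starting from Wei's Colmez-type formula \eqref{colmez}, the plan is to bound its two terms separately. The first term, $-\tfrac14\ln|\mathfrak d(\calo_c)|_\infty$, is non-positive because $|\mathfrak d(\calo_c)|_\infty\ge1$. So it can be dropped for an upper bound; in any case it is only $O(\log|D_c|_\infty)$, since $|\mathfrak d(\calo_c)|_\infty\le|D_c|_\infty$ as recorded above in both parities. The whole task therefore reduces to showing that the logarithmic derivative
\[
-\frac{\zeta'_{\calo_c}(0)}{\zeta_{\calo_c}(0)}
\]
is $\ll_\varepsilon |D_c|_\infty^{\varepsilon}$. This is the analogue of Tsimerman's use of the bound $L'(1,\chi)/L(1,\chi)\ll_\varepsilon |D|^\varepsilon$ in \cite[Corollary~3.3]{tsimao}.

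Next I would make $\zeta_{\calo_c}$ explicit. Since $\zeta_{\calo_c}(s)$ is a rational function of $u=q^{-s}$, I would write it as $Z(u)=\zeta_A(u)\,L_c(u)$, up to the factor coming from $\infty$. Here $\zeta_A(u)=1/(1-qu)$. The factor $L_c(u)$ is the $L$-function of the quadratic character of $K/F$, multiplied by the finitely many Euler factors at primes dividing $c$ that account for restricting to invertible ideals of the order. Its explicit form is $\prod_{P\mid c}\bigl(1-\chi(P)u^{\deg P}\bigr)\sum_{d\mid c}\dots$, following the standard order-zeta formula.

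With this factorisation, the logarithmic derivative at $s=0$ (that is, $u=1$) splits into three pieces, each handled directly:
\begin{itemize}
\item The $\zeta_A$ part contributes an absolute constant.
\item $L(u,\chi)$ is a polynomial of degree $\le\deg D_K$ whose inverse roots all have absolute value $\sqrt q$, by Weil's Riemann hypothesis. Writing $L(u,\chi)=\prod_{k}(1-\alpha_k u)$, each term $\alpha_k/(1-\alpha_k)$ has absolute value at most $\sqrt q/(\sqrt q-1)$. Hence $|L'/L(1)|\ll \deg D_K\ll \log|D_c|_\infty$.
\item The correction factors at $P\mid c$ have logarithmic derivatives bounded by $\sum_{P\mid c}O(\deg P)\ll\deg c\ll\log|D_c|_\infty$.
\end{itemize}
This requires that no factor vanishes at $u=1$. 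That holds because $L(1,\chi)$ is essentially the class number, which is nonzero, and the order factors are of the form $1-\chi(P)$ or $1-q^{\deg P}$ only when they are not zero. I would verify this last point case by case, using $\zeta_{\calo_c}(0)\neq0$, which is equivalent to \eqref{siegelclassnumber}-type nonvanishing.

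The total bound is then $O(\log|D_c|_\infty)$, which is $\le C_\varepsilon|D_c|_\infty^\varepsilon$ with an effective constant. The main obstacle I expect is getting the precise shape of $\zeta_{\calo_c}$ for a non-maximal order, and the treatment of the place at infinity, right. In particular I need to ensure that the normalisation $\zeta'/\zeta$ in Wei's formula is with respect to $s$, which introduces a harmless factor $\ln q$. For this I would follow Wei's derivation in \cite{wei}, which already expresses the formula via $L$-functions of $\chi$ together with the local terms at $P\mid c$.
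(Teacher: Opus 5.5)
Your argument is correct, and it follows a genuinely different and sharper route than the paper's. The paper follows Tsimerman's archimedean template. It takes the logarithmic derivative of the functional equation to move from $s=0$ to $s=1$. It then bounds $\zeta_{\calo_c}(1)$ from below by Siegel's estimate \eqref{siegelclassnumber}, and bounds $\zeta'_{\calo_c}(1)$ from above by Cauchy's formula together with the Riemann hypothesis and convexity. This yields $O_\varepsilon(|D_c|_\infty^\varepsilon)$.

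You instead use that $\zeta_{\calo_c}$ is rational in $u=q^{-s}$ and work directly at $u=1$. Since the nontrivial inverse roots of $L(u,\chi)$ lie on $|\alpha|=\sqrt q$, each contributes $O(1)$ to the logarithmic derivative. The functional equation, Siegel and convexity therefore become unnecessary, and you get the stronger, visibly effective bound $\widetilde h_{\mathrm{Tag}}(\cald)\ll 1+\log_q|D_c|_\infty$. Nonvanishing at $u=1$ is unconditional: for $K\neq\F_{q^2}(T)$ one has exactly $L(1,\chi)=\#\pic(\calo_K)$. In exchange, the paper treats $\zeta_{\calo_c}$ as a black box with a functional equation and a class number formula, whereas your route requires the Euler factors at $P\mid c$ explicitly.

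\begin{enumerate}
\item \textbf{Inert case.} When $\infty$ is inert, $L(u,\chi)=(1+u)P_K(u)$ has the trivial inverse root $-1$, which contributes $1/2$ at $u=1$. For $K=\F_{q^2}(T)$ itself, $L(u,\chi)=(1+qu)^{-1}$ is not a polynomial. Neither affects the bound.
\item \textbf{Order factors.} Your formula for these is too vague, but what you need is easy to get. Invertible ideals of the local order $\calo_{c,P}$ are principal, so a direct orbit count handles it. For $P^k\parallel c$ the local factor is $\zeta_{\calo_K,P}\cdot Q_P(u^{\deg P})$ with $Q_P$ a polynomial, $Q_P(1)=[\calo_{K,P}^\times:\calo_{c,P}^\times]\neq 0$ and $Q_P'(1)/Q_P(1)=O(k)$. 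For example, when $P$ splits, $Q_P(x)=(1-x)^2\sum_{i<k}|P|_\infty^i x^{2i}+\#(A/P^k)^\times x^{2k}$. This gives $O(\deg c)$ in total, as you claimed.
\item \textbf{Characteristic $2$.} The inequality $\deg L(u,\chi)\le\deg D_K$ holds for $q$ odd and for $q$ even with $\infty$ inert. It fails for $q$ even with $\infty$ ramified. There $\deg L=2g_K$ also involves the wild conductor exponent $\deg B-\deg C+1$ at $\infty$, which $D_K=G^2$ does not see. The paper's functional-equation and convexity step tacitly needs the same control of $g_K$ by $D_c$, and the lemma is only applied for $q$ odd.
\end{enumerate}
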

\begin{proof}
    We proceed by bounding both terms on the right hand side of the Colmez-type formula \eqref{colmez} by $|D_c|_\infty$. Indeed, for the first term, such a bound is trivial for both $q$ odd and, by the previous discussion, $q$  even as well. Taking the logarithmic derivative of the functional equation for $\zeta_{\calo_c}$, we obtain
    \[
    \dfrac{\zeta'_{\calo_c}(1)}{\zeta_{\calo_c}(1)}+\dfrac{\zeta'_{\calo_c}(0)}{\zeta_{\calo_c}(0)}=O(|D_c|_\infty),
    \]
    so that it is enough to bound $\left|\frac{\zeta'_{\calo_c}(1)}{\zeta_{\calo_c}(1)} \right|$. By Siegel's bound \eqref{siegelclassnumber} we have
    \[
    \zeta_{\calo_c}(1)=C_\varepsilon|D_c|_\infty.
    \]
    On the other hand, by Cauchy's integral theorem we have that $\zeta_{\calo_c}'(1)$ is an average of $\zeta_{\calo_c}(s)$ over a complex ball of center $1$ and radius $\varepsilon>0$ arbitrarily small. By the Riemann hypothesis over function fields (see, for instance, \cite[Appendix]{rosen}) and the convexity estimate we have
    \[
    \zeta_{\calo_c}'(1)\le C_\varepsilon |D_c|^{\varepsilon}_\infty.
    \]
    As $\varepsilon$ is arbitrary, we conclude.
\end{proof}

The following result adapts  \cite[Propositions~1.1 and~2.1]{silv} to the
Drinfeld module setting.
\\
For every place $\widetilde{\infty}$ of $L$ above $\infty$, we write
\[
\cald_{\widetilde{\infty}}
:=
\cald\otimes_{L}\C_\infty .
\]
By rigid-analytic uniformisation via the Drinfeld exponential
(see, for instance, \cite[(5.2.6), p.~290]{pap}), there is a rank-$2$
$A$-lattice $\Lambda_{\widetilde{\infty}}\subset\C_\infty$ and an exact
sequence
\[
0\rightarrow \Lambda_{\widetilde{\infty}}
\rightarrow \C_\infty
\xrightarrow{\exp_{\Lambda_{\widetilde{\infty}}}}
\cald_{\widetilde{\infty}}(\C_\infty)
\rightarrow 0.
\]
Thus
\[
\cald_{\widetilde{\infty}}(\C_\infty)
\simeq
\C_\infty/\Lambda_{\widetilde{\infty}}\ .
\]
After homothety we may write $\Lambda_{\widetilde{\infty}}
= A+z_{\widetilde{\infty}}A$ for $z_{\widetilde{\infty}}\in\Omega.$
Let us set $n_{\widetilde{\infty}}:=[L_{\widetilde\infty}:F_\infty]$.

\begin{prop}\label{p:taguchi-weil}
Let $L/F$ be a finite extension and let $\cald$ be a Drinfeld
$A$-module of rank $2$ over $L$. Let $\gotd$ denote its minimal
discriminant ideal. Then
\begin{equation}\label{e:silv}
(q^2-1)[L:F]\,h_{\mathrm{Tag}}(\cald)
=
\log_q N_{L/F}(\gotd)
-
\sum_{\widetilde{\infty}\mid\infty}
n_{\widetilde{\infty}}
\left(
\log_q\!\left|\Delta(z_{\widetilde{\infty}})\right|_{\widetilde{\infty}}
+
(q^2-1)\,\log_q D_A\!\left(\Lambda_{\widetilde{\infty}}\right)
\right),
\end{equation}
where $N_{L/F}(\gotd)=\#(\calo_L/\gotd)$.

Moreover, if $\cald$ has stable reduction at every finite place of $L$, then
\begin{equation}\label{e:silv2}
\left|
h\big(j(\cald)\big)
-
(q^2-1)\,h_{\mathrm{Tag}}(\cald)
\right|
\le
\frac{q^2-1}{2}\log_q\!\left(1+h\big(j(\cald)\big)\right)
+O_q(1).
\end{equation}

\end{prop}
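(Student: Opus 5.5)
Both parts are an adaptation of Silverman's argument, with the discriminant $\Delta$ playing the role of the minimal discriminant of an elliptic curve and $D_A(\Lambda)$ playing the role of the period lattice volume.

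\textbf{Identity \eqref{e:silv}.} I would compute $\deg(\omega_{\ncal/\calo_L},\|\cdot\|)$ from \eqref{e:tagdef} with a convenient section. Fix a model $\varphi_T=T+g\uptau+\Delta\uptau^2$ over $L$ and take $\ell=\text{d}X$.
\begin{itemize}
\item At a finite place $w$: the minimal model differs from this one by a scaling $X\mapsto u_wX$. Under such a scaling $\Delta$ changes by $u_w^{q^2-1}$. Hence $(q^2-1)\log_q\#(\omega/\text{d}X\,\calo_L)$ equals $\log_q N_{L/F}(\gotd)$ minus the contribution of $\Delta$ at the finite places.
\item At a place $\widetilde\infty\mid\infty$: write $\Lambda_{\widetilde\infty}=\lambda(A+z_{\widetilde\infty}A)$. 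The homothety by $\lambda$ gives $\Delta=\lambda^{1-q^2}\Delta(z_{\widetilde\infty})$. Combined with $\|\text{d}X\|_{\widetilde\infty}=D_A(\Lambda_{\widetilde\infty})$, this produces the archimedean term.
\end{itemize}
The product formula then eliminates the choice of model, since $\prod_w|\Delta|_w=1$, and \eqref{e:silv} follows. The one thing to check carefully is that $D_A$ scales by $|\lambda|$ under homothety, so that the $\lambda$-terms cancel against the Lie-algebra normalization.

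\textbf{Inequality \eqref{e:silv2}.} Under stable reduction, $\log_qN_{L/F}(\gotd)$ is exactly the finite-place part of $[L:F]\,h(j)$. Indeed, $j=g^{q+1}/\Delta$ with $g$ integral on the minimal model, so $\ord_w(\Delta)=\max(0,-\ord_w j)$. The task therefore reduces to a local comparison at each $\widetilde\infty$ between three quantities:
\[
\log_q^+|j(z)|,\qquad -\log_q|\Delta(z)|,\qquad -(q^2-1)\log_q D_A(A+zA),
\]
where $z$ is moved into the fundamental domain $\calf$ by $\gl_2(A)$-invariance. I expect to show
\[
-\log_q|\Delta(z)|-(q^2-1)\log_q D_A(A+zA)=\log_q^+|j(z)|-\tfrac{q^2-1}{2}\log_q|z|_i+O(1).
\]
Two inputs give this:
\begin{itemize}
\item On $\calf$, $D_A(A+zA)\asymp|z|_i^{1/2}$, which is Wei's explicit discriminant of a reduced basis.
\item $|\Delta(z)|=|\widetilde\pi|^{q^2-1}|h(z)|^{q-1}$ together with $|h(z)|\asymp|Q|$. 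By \eqref{jexpansion} and Lemma~\ref{l:carlitz-cusp}, this gives $\log_q|j(z)|=-(q-1)\log_q|Q|+O(1)$ for $|z|_i$ large. For $|z|_i$ bounded, everything is $O(1)$ by compactness.
\end{itemize}

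The main obstacle is the leftover term $\log_q|z|_i$, which must be bounded by $\log$ of the height. Near the cusp, $|Q|$ is roughly exponential in $-|z|_i$, so $\log_q^+|j(z)|\gg|z|_i$ and hence $\log_q|z|_i\le\log_q(1+\log_q^+|j(z)|)+O(1)$. Averaging over the places $\widetilde\infty$ with weights $n_{\widetilde\infty}/[L:F]$ and applying Jensen's inequality (concavity of $\log$) bounds the total by $\frac{q^2-1}{2}\log_q(1+h(j))+O_q(1)$. The hardest step is controlling $|Q(z)|$ precisely in terms of $|z|_i$ on $\calf$. For this I would use the product expansion of $\exp_A$ to get $\log_q|1/\exp_A(z)|\asymp -q^{\deg}$-type growth.
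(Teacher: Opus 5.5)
Your proposal is correct and is essentially the paper's proof. For \eqref{e:silv}, the paper computes the same Arakelov degree but uses the coordinate-independent section $\Delta\,(\mathrm dX)^{\otimes(q^2-1)}$ of $\omega^{\otimes(q^2-1)}$, which absorbs the homothety and product-formula bookkeeping you do with $\mathrm dX$. For \eqref{e:silv2}, it uses exactly your inputs: $D_A(A+zA)=|z|_i^{1/2}$ on $\calf$, the $Q$-expansions of $\Delta$ and $j$, Gekeler's estimate $\log_q|Q(z)|^{-1}\asymp|z|_i$, and Jensen's inequality.

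One small correction: the finite-place identity $\ord_w(\Delta_w)=\max\{0,-\ord_w j\}$ holds because stable reduction forces $g_w$ or $\Delta_w$ to be a unit on the minimal model. Integrality of $g$ alone does not give it.
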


\begin{proof}
We first prove \eqref{e:silv}.  Recall that $X$ denotes the coordinate of $\G_{a,L}$ and that $\varphi_T=\iota(T)+g\uptau +\Delta\uptau^2$.
Let $\ncal$ be the minimal model of $\cald$ over $\calo_L$, and write
\[
\omega:=\omega_{\ncal/\calo_L}.
\]
We compute the degree of $\omega^{\otimes (q^2-1)}$, rather than that of
$\omega$.

Consider the rational section
\[
\beta:=\Delta\cdot(\mathrm dX)^{\otimes (q^2-1)}
\]
of $\omega^{\otimes (q^2-1)}$. It is independent of the choice of coordinate.
Indeed, if $X'=uX$, then
\[
g'=u^{1-q}g,\qquad
\Delta'=u^{1-q^2}\Delta,
\qquad
\mathrm dX'=u\,\mathrm dX,
\]
and hence
\[
\Delta'(\mathrm dX')^{\otimes (q^2-1)}
=
u^{1-q^2}\Delta\cdot u^{q^2-1}(\mathrm dX)^{\otimes (q^2-1)}
=
\Delta(\mathrm dX)^{\otimes (q^2-1)}.
\]

Let $w\nmid\infty$ be a finite place of $L$. Choose a minimal local
coordinate $X_w$ at $w$, and put
\[
\alpha_w:=\mathrm dX_w .
\]
Then $\alpha_w$ is a generator of $\omega$ over $\calo_{L,w}$, and by the
coordinate-invariance of $\beta$ we have
\[
\beta=\Delta_w\cdot \alpha_w^{\otimes (q^2-1)}.
\]
Therefore
\[
\frac{\omega^{\otimes (q^2-1)}\otimes_{\calo_L}\calo_{L,w}}
     {\calo_{L,w}\beta}
\simeq
\frac{\calo_{L,w}\alpha_w^{\otimes (q^2-1)}}
     {\calo_{L,w}\Delta_w\alpha_w^{\otimes (q^2-1)}}
\simeq
\frac{\calo_{L,w}}{\calo_{L,w}\Delta_w}.
\]
Thus the finite contribution of $w$ to
$\deg(\omega^{\otimes (q^2-1)})$ is
\[
\log_q\#\left(\calo_{L,w}/\calo_{L,w}\Delta_w\right).
\]
Summing over all finite places gives
\[
\sum_{w\nmid\infty}
\log_q\#\left(\calo_{L,w}/\calo_{L,w}\Delta_w\right)
=
\log_q N_{L/F}(\gotd).
\]

It remains to compute the infinite contribution. Fix
$\widetilde{\infty}\mid\infty$ and write
$z=z_{\widetilde{\infty}}$ and
$\Lambda=\Lambda_{\widetilde{\infty}}$. Via the Drinfeld exponential, the
differential $\mathrm dX$ corresponds to the differential on
$\C_\infty/\Lambda$ whose norm is
\[
\|\mathrm dX\|_{\widetilde{\infty}}=D_A(\Lambda).
\]
Hence
\[
\|\beta\|_{\widetilde{\infty}}
=
\left|\Delta(z)\right|_{\widetilde{\infty}}
D_A(\Lambda)^{(q^2-1)}.
\]
By the definition of the degree of a metrized line bundle,
the infinite contribution to $\deg(\omega^{\otimes (q^2-1)})$ is therefore
\[
-
\sum_{\widetilde{\infty}\mid\infty}
n_{\widetilde{\infty}}
\log_q
\left(
\left|\Delta(z_{\widetilde{\infty}})\right|_{\widetilde{\infty}}
D_A(\Lambda_{\widetilde{\infty}})^{(q^2-1)}
\right),
\]
which is exactly the infinite term in \eqref{e:silv}. Since
$\deg(\omega^{\otimes (q^2-1)})=(q^2-1)\deg(\omega)$, and $h_{\mathrm{Tag}}(\cald)=
\frac{1}{[L:F]}\deg(\omega)$,
formula \eqref{e:silv} follows.

We now prove \eqref{e:silv2}. For each
$\widetilde{\infty}\mid\infty$, choose
$z_{\widetilde{\infty}}$ in the fundamental domain $\calf$ for the
$\gl_2(A)$-action. On $\calf$, the usual $Q$-expansions at the cusp give
\[
-\log_q\left|\Delta(z)\right|_{\widetilde{\infty}}
=
\log_q^+\left|j(z)\right|_{\widetilde{\infty}}
+O_q(1).
\]
Let us show it. By \cite[Theorem 6.1]{gekelerj} we have the following expansion of $\Delta$. Consider the $a$-th inverse cyclotomic polynomial $f_a(X)=\ccal_a(X^{-1})X^{|a|_\infty}\in A[X]$, where we recall that $\ccal$ denotes the Carlitz $A$-module. Then $\Delta$ has an expansion in $Q$:
\[
\Delta(z)
=
-\widetilde{\pi}^{q^2-1}
Q^{q-1}
\prod_{\substack{a\in A\\ a\ \mathrm{monic}}}
f_a(Q)^{(q^2-1)(q-1)}
\]
and
\[
j(z)=Q^{-(q-1)}+r(Q),
\]
while on the bounded part of $\calf$ both sides are $O_q(1)$.

At a finite place $w\nmid\infty$, stable reduction implies that the finite
Taguchi contribution agrees with the finite contribution of the graded
height. In a stable integral model, if the reduction has rank
$2$, then $\Delta_w$ is a unit and $j$ has no pole at $w$. If the reduction
has rank $1$, then the coefficient of $\uptau$ is a unit and $\ord_w(j)=-\ord_w(\Delta_w)$.
Thus
\[
\log_q N_{L/F}(\gotd)
=
\sum_{w\nmid\infty}
[\F_w:\F_q]\max\{-\ord_w(j(\cald)),0\}.
\]
Combining this finite equality with the preceding infinite estimate gives
\[
\log_q N_{L/F}(\gotd)
-
\sum_{\widetilde{\infty}\mid\infty}
n_{\widetilde{\infty}}
\log_q\left|\Delta(z_{\widetilde{\infty}})\right|_{\widetilde{\infty}}
=
[L:F]\,h\big(j(\cald)\big)
+
O_q([L:F]).
\]
Substituting this into \eqref{e:silv}, we obtain
\[
(q^2-1)[L:F]\,h_{\mathrm{Tag}}(\cald)
=
[L:F]\,h\big(j(\cald)\big)
-
(q^2-1)\sum_{\widetilde{\infty}\mid\infty}
n_{\widetilde{\infty}}
\log_qD_A(\Lambda_{\widetilde{\infty}})
+
O_q([L:F]).
\]

It remains to estimate the last sum. Let us first recall the normalization of
the lattice volume. In the notation of \cite[Definition~2.1]{bpr}, if
$\Lambda$ has successive minimum basis $(\omega_1,\dots,\omega_r)$, then
\[
D(\Lambda):=\prod_{i=1}^r|\omega_i|_\infty .
\]
Thus, for $z\in\calf$ and $\Lambda_z=A+zA$, \cite[Lemma~2.4(6)]{bpr}
gives
\[
D(\Lambda_z)=|z|_i.
\]
On the other hand, the normalization entering the infinite Taguchi metric is $A$-volume $D_A$ (see \cite[Section~4, before Definition~4.1]{wei}). Hence, in rank $2$, for the free reduced lattice $\Lambda_z=A+zA$, we have
\[
D_A(\Lambda_z)=D(\Lambda_z)^{1/2}=|z|_i^{1/2}.
\]

Recall also that $Q(z)=\exp_{\ccal}(\widetilde\pi z)^{-1}$. Near the cusp, we write $f(z)\sim g(z)$ to mean that $f(z)/g(z)$ stays between two positive constants depending only on $q$ as $|z|_i\to\infty$.
By \cite[Lemma~5.5]{gekelerj}, for $z\in\calf$ with $|z|_i>1$ one has
\[
\log_q |Q(z)|_{\widetilde{\infty}}^{-1}\sim |z|_i .
\]
Consequently,
\[
\log_qD_A(\Lambda_z)
=
\frac12\log_q|z|_i
=
\frac12\log_q\log_q |Q(z)|_{\widetilde{\infty}}^{-1}
+O_q(1).
\]
Using the $Q$-expansion
$j(z)=Q(z)^{-(q-1)}+r(Q(z))$,
we obtain
\[
\log_qD_A(\Lambda_z)
=\frac12
\log_q\!\left(1+\log_q^+|j(z)|_{\widetilde{\infty}}\right)
+O_q(1).
\]
On the bounded part of $\calf$ the same estimate is absorbed into $O_q(1)$.
Since $j(z)=Q^{-(q-1)}+r(Q)$,
we get, uniformly for $z\in\calf$,
\[
0\le
\log_qD_A(\Lambda_z)
\le
\frac12
\log_q\!\left(1+\log_q^+|j(z)|_{\widetilde{\infty}}\right)
+O_q(1).
\]
Therefore, using the mean inequality,
\[
\begin{aligned}
0
&\le
\sum_{\widetilde{\infty}\mid\infty}
n_{\widetilde{\infty}}
\log_qD_A(\Lambda_{\widetilde{\infty}})  \\
&\le
\frac12
\sum_{\widetilde{\infty}\mid\infty}
n_{\widetilde{\infty}}
\log_q\!\left(
1+\log_q^+|j(\cald)|_{\widetilde{\infty}}
\right)
+
O_q([L:F])                                           \\
&\le
\frac{[L:F]}{2}
\log_q\!\left(
1+
\frac{1}{[L:F]}
\sum_{\widetilde{\infty}\mid\infty}
n_{\widetilde{\infty}}
\log_q^+|j(\cald)|_{\widetilde{\infty}}
\right)
+
O_q([L:F])   \\
&\le
\frac{[L:F]}{2}
\log_q\!\left(1+h\big(j(\cald)\big)\right)
+
O_q([L:F]).
\end{aligned}
\]
Dividing by $[L:F]$ we conclude.
\end{proof}

\section{The Pila--Zannier strategy}

\subsection{Functional transcendence}

\subsubsection{Ax--Lindemann--Weierstrass}

Recall that, over a field $L$ of characteristic $p$, a \emph{$p$-polynomial} in one variable is a polynomial
\[
f(X)=\sum_{j=0}^m a_j X^{p^j}\in L[X]
\]
which is characterized by the identity $f(x+y)=f(x)+f(y)$ for all $x,y$.
For the $n$-variable case, a morphism $\G_{a,L}^n\rightarrow\G_{a,L}$ is given by a polynomial
\[
F(X_1,\dots,X_n)=\sum_{i=1}^n f_i(X_i)
\]
where each $f_i$ is a $p$-polynomial.

	\begin{thm}\label{alw}
		Let $V$ be a $\C_\infty$-algebraic subvariety of $\cald\times\cald'$ over $\C_\infty$, and let $W$ be a maximal irreducible $\C_\infty$-algebraic subvariety contained in $\exp^{-1}_{\Phi}(V)$. Assume $\rk\cald=\rk\cald'$. Then $\exp_{\Phi}(W)$ is the translate of a sub-$\Phi$-module of $\cald\times\cald'$.
	\end{thm}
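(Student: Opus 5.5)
We prove Theorem~\ref{alw} by using the lattice $\Lambda\times\Lambda'$ as a large group of symmetries of $\exp_\Phi^{-1}(V)$, and then classifying the algebraic curves and subvarieties that are stabilized by a large part of it.

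\emph{Reduction to stabilizers.} Put $\Pi:=\Lambda\times\Lambda'$. Since $\exp_\Phi$ is $\Pi$-periodic, $\exp_\Phi^{-1}(V)$ is $\Pi$-invariant. For every $\lambda\in\Pi$ the translate $W+\lambda$ is again a maximal irreducible algebraic subvariety of $\exp_\Phi^{-1}(V)$. Let $\Stab(W)$ denote the stabilizer of $W$ in $\G_{a}^2$; it is a Zariski closed subgroup. Let $H$ be its identity component, which is a $p$-subgroup defined by $p$-polynomials. If $\dim W=0$ the conclusion is trivial, and if $W=\G_a^2$ there is nothing to prove. The aim is to show that $W=w+H$ for some $w$, and that $H$ is stable under $\Lie(\Phi_a)=\iota(a)I_2$. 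Once this holds, $\exp_\Phi(W)=\exp_\Phi(w)+\exp_\Phi(H)$. The Zariski closure of $\exp_\Phi(H)$ is then a connected algebraic subgroup stable under every $\Phi_a$, hence a sub-$\Phi$-module, and it lies in $V$ by maximality. Note that $T$-stability alone forces $H$ to be an $F$-subspace, hence a $\C_\infty$-line or all of $\C_\infty^2$, because $H$ is a $p$-subgroup that is closed under multiplication by $\iota(T)$, and $\iota(T)$ is transcendental over $\F_q$.

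\emph{Counting step.} The key input is that a curve $W$ which is not itself a translate of a group must meet only few of its $\Pi$-translates inside $\exp_\Phi^{-1}(V)$. Fix an affinoid ball $B$ around a point of $W$. For $\lambda\in\Pi$ with $|\lambda|\le R$, the translate $W+\lambda$ meets the fixed ball $B+\lambda$. The number of such $\lambda$ is about $q^{2r\log_q R}$, since $\Pi$ has $A$-rank $2r$. On the other hand, consider the analytic set $\exp_\Phi^{-1}(V)\cap(B+\Pi_{\le R})$. Its maximal algebraic pieces of dimension $\dim W$ through points of $B+\Pi_{\le R}$ are, by an o-minimal-free argument, controlled by the degree of $V$. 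Concretely, restrict $\exp_\Phi$ to the line in the direction of $W$ and compare degrees: an algebraic curve of degree $d$ mapping into $V$ and crossing $N$ disjoint fundamental translates produces, by Bezout against a translate $V+\exp_\Phi(\text{small})$, a bound on $N$ that is polynomial in $d$. This is polynomial growth, whereas the translates grow exponentially. The conflict forces the map $\lambda\mapsto W+\lambda$ to have large fibers, so $\Stab(W)\cap\Pi$ contains a sublattice $\Pi_0$ whose $A$-span has positive rank in the direction of $W$. Then $H\supseteq$ the Zariski closure of $\Pi_0$.

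\emph{Identifying $H$.} The Zariski closure of an infinite subgroup $\Pi_0$ of $\Pi$ is an $\F_q$-subgroup cut out by $p$-polynomials. The $\iota(T)$-stability is not automatic, and this is where $\rk\cald=\rk\cald'$ enters. We use the relation $\exp_\Phi(\iota(T)z)=\Phi_T(\exp_\Phi z)$: the variety $\Phi_T^{-1}(V)$ has the same degree structure as $V$, and $W$ maximal in $\exp_\Phi^{-1}(V)$ gives $\iota(T)^{-1}W$ maximal in $\exp_\Phi^{-1}(\Phi_T^{-1}V)$. Applying the counting step uniformly in this family shows that the stabilizer is commensurable with an $A$-submodule of $\Pi$. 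The Zariski closure of a rank-$\ge1$ $A$-submodule $M\subseteq\Lambda\times\Lambda'$ is determined by the $p$-polynomial relations it satisfies. If $M$ projects nontrivially to both factors and the relation is $f(x)=g(y)$, comparing the growth $|a|^{r}$ versus $|a|^{r'}$ of the two sides along $aM$ forces $f,g$ to be linear up to a common Frobenius power. Hence $H$ is an $F$-stable line or the whole plane. In the unequal-rank case $\tau$-twisted curves such as those of Example~\ref{eg:different-ranks} survive, which shows that the hypothesis is needed.

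\emph{Main obstacle.} The hardest step is the second one: turning the exponential number of lattice translates into a genuine polynomial upper bound on algebraic pieces without o-minimality. We would first try to do this by a direct degree and Bezout argument on $p$-polynomial curves. If that fails, we would fall back on the block counting of Theorem~\ref{t:blockcounting}.
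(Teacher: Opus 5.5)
You have the right overall architecture: show that $\Stab(W)$ is positive-dimensional, so that $W$ is a coset of a one-dimensional $p$-subgroup $H=\{P(x)=Q(y)\}$, and then show that $H$ is a $\C_\infty$-line. However, both substantive steps have gaps.

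\textbf{The counting step.} The bound you aim for is false. For non-vertical $W$, only finitely many $\mu\in\Lambda'$ satisfy $W+(0,\mu)=W$; otherwise $\Stab(W)^0=\{0\}\times\G_a$ and $W$ would be vertical. So the curves $W+(0,\mu)$ with $|\mu|\le R$ are $\gg R^{r}$ distinct algebraic curves in $\exp_\Phi^{-1}(V)$ meeting $B+\Pi_{\le R}$, and no Bezout argument can bound them polynomially in $\deg V$. What \emph{is} bounded by the degree of $V$ is the number of translates modulo the vertical lattice. This comes from a fibrewise pigeonhole, not from growth rates:
\begin{itemize}
\item Fix a generic $x$ and put $u=\exp_\Lambda(x)$. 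For each $\lambda\in\Lambda$ the finite set $\exp_{\Lambda'}(W_{x+\lambda})$ lies in the fibre $V_u$, which has at most $\deg_Y V$ points.
\item If two such sets meet, comparing local branches and using irreducibility of $W$ shows that $W-(\lambda,0)$ and $W-(\lambda',0)$ differ by some $(0,\mu)$ with $\mu\in\Lambda'$.
\item Hence there is a finite-index $\Lambda_1\subseteq\Lambda$ with $(\lambda,\mu(\lambda))\in\Stab(W)\cap\Pi$ for all $\lambda\in\Lambda_1$ (Lemma~\ref{l:3.2}).
\end{itemize}
Your fallback to Theorem~\ref{t:blockcounting} would not supply this either. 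That theorem bounds rational points outside algebraic blocks and says nothing about stabilizers.

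\textbf{The identification step.} This is where the real content of the theorem lies, and your argument does not reach it.
\begin{itemize}
\item The family $\Phi_T^{-1}(V)$ does not have the same degree as $V$: it grows by a factor $q^{r}$ in each variable. So nothing can be applied uniformly there.
\item The claimed commensurability of $\Stab(W)\cap\Pi$ with an $A$-submodule is asserted, not proved. It is in fact equivalent to the conclusion, because the Zariski closure of an $A$-module is automatically $\iota(T)$-stable.
\item Comparing growth rates only sees leading terms. Counting $\lambda\in\Lambda_1$ with $|\lambda|\le R$ against points of $\Lambda'$ gives exactly $\deg P/\deg Q=r/r'$, hence $\deg P=\deg Q=p^m$ in equal rank. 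It does not exclude lower-order terms: for instance $x^p+x=y^p$ has $|x|_\infty=|y|_\infty$ for large values and is not ruled out by growth.
\end{itemize}
Ruling out $n=p^m>1$ needs arithmetic input that is absent from your plan:
\begin{itemize}
\item after normalizing, the coefficients of $P,Q$ lie in $F$;
\item the leading-coefficient ratio is an $n$-th power $c^n$ with $c=a/b\in F$, by $1/T$-adic approximation, since $(x/y)^n$ only has exponents divisible by $n$;
\item $F\Lambda=F\Lambda'$.
\end{itemize}
Then the substitution $Z=bX-aY$ turns the equation into $z^n-\tilde P_1(z)=\tilde Q_1(y)$, with $z$ in the rank-$r$ lattice $b\Lambda+a\Lambda'$ and $\log_q|z|_\infty\le\frac{n-1}{n}\log_q|y|_\infty+O(1)$. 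Lattice-point counting forces $\tilde Q_1$ to be constant, and irreducibility then gives $n=1$ (Proposition~\ref{prop:deg=1}).
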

The rest of this section is devoted to the proof of this theorem.
\\

Since $\cald\times\cald'\simeq\A^2$ as varieties, we have $\dim W\in\{0,1,2\}$.
If $\dim W=0$ the conclusion is immediate.
If $\dim W=2$, then $W=\A^2_{\C_\infty}$, hence $\exp_\Phi^{-1}(V)=\A^2_{\C_\infty}$ and thus
$V=\cald\times\cald'$; again the conclusion holds. We may therefore assume $\dim W=1$,
so $W$ is an irreducible algebraic curve. If $W$ is vertical or horizontal, i.e., one of the two coordinates $W\rightarrow \A^1$ is not dominant, then the claim of the Proposition is obviously true. We therefore assume that $W$ is neither vertical nor horizontal.

Let $F(X,Y)=0$ denote the algebraic relation that defines $W$ and $G(X,Y)=0$ the algebraic relation that defines $V$.
Set
\[
W_x=\{y: F(x,y)=0\}\;\;\;\text{and}\;\;\;V_u=\{v : G(u,v)=0\}.
\]
\begin{lem}\label{l:3.2}
    There is a $A$-sublattice $\Lambda_1\subset\Lambda$ with $[\Lambda:\Lambda_1]=k_1$, such that
    for any $\lambda\in\Lambda_1$ and $x\in\A^1_{\C_\infty}$ there is a $\mu=\mu(\lambda)\in\Lambda'$ such that
    \[
    W_{x+\lambda}=W_x+\mu\;.
    \]
\end{lem}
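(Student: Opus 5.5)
The idea is to turn the statement into one about the stabiliser of $W$ under lattice translations. Fix $(\lambda,\mu)\in\Lambda\times\Lambda'$. The identity $W_{x+\lambda}=W_x+\mu$ for all $x$ says exactly that $W-(\lambda,\mu)=W$. So I would study
\[
\mathrm{St}(W):=\{(\lambda,\mu)\in\Lambda\times\Lambda' : W+(\lambda,\mu)=W\},
\]
which is an additive subgroup of $\Lambda\times\Lambda'$. The goal is to show that its projection $\Lambda_1$ to $\Lambda$ has finite index $k_1$. Since $\exp_\Phi^{-1}(V)$ is invariant under $\Lambda\times\Lambda'$, every translate $W+(\lambda,\mu)$ is again an irreducible algebraic curve contained in $\exp_\Phi^{-1}(V)$. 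Two translates that share infinitely many points coincide, because both are irreducible curves.

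\emph{Step 1: the translates pass through a fixed finite set.} The cases $V=S$ and $V$ a vertical line are trivial, so I may assume $V$ is a curve whose first projection is dominant. Since $W$ is neither vertical nor horizontal, its first projection $W\to\A^1$ misses at most finitely many points. I choose $x_0$ generic, so that every point $x_0+\lambda$ with $\lambda\in\Lambda$ lies in this projection, and so that the fibre $V_{u_0}$ over $u_0:=\exp_\cald(x_0)$ is finite, say $V_{u_0}=\{v_1,\dots,v_k\}$. For each such $v_i$ fix a lift $y_i\in\exp_{\cald'}^{-1}(v_i)$.

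Now let $\lambda\in\Lambda$ and pick $(x_0+\lambda,y_\lambda)\in W$. Then
\[
\exp_\Phi(x_0+\lambda,y_\lambda)=\big(u_0,\exp_{\cald'}(y_\lambda)\big)\in V,
\]
so $y_\lambda\equiv y_i \bmod \Lambda'$ for some $i$. This gives $\mu_\lambda\in\Lambda'$ with $y_\lambda-\mu_\lambda=y_i$. Consequently the translate $W_\lambda:=W-(\lambda,\mu_\lambda)$ passes through one of the finitely many points $p_i:=(x_0,y_i)$.

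\emph{Step 2: only finitely many translates occur.} Take a closed ball $B$ around a point $p_i$. The set $\exp_\Phi^{-1}(V)\cap B$ is a one-dimensional rigid analytic subset of an affinoid, so it has only finitely many irreducible components, and in particular only finitely many analytic branches through $p_i$. Each $W_\lambda$ passing through $p_i$ contains one of these branches. By the remark above, two such translates containing the same branch are equal. Hence $\{W_\lambda:\lambda\in\Lambda\}$ is a finite set of curves, say of cardinality $K$. If $W_\lambda=W_{\lambda'}$, then $(\lambda-\lambda',\mu_\lambda-\mu_{\lambda'})\in\mathrm{St}(W)$. Therefore the fibres of the map $\lambda\mapsto W_\lambda$ are cosets of $\Lambda_1$, and $[\Lambda:\Lambda_1]\le K$. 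For $\lambda\in\Lambda_1$ the required $\mu(\lambda)$ is the second coordinate of an element of $\mathrm{St}(W)$ lying over $\lambda$.

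\emph{Step 3: the expected obstacle.} The argument above yields $\Lambda_1$ only as a finite-index $\F_q$-subspace. The statement asks for an $A$-sublattice, and this upgrade is the step I expect to be hardest. My first attempt would go through the algebraic group $H:=\{t\in\G_a^2 : W+t=W\}$. It is Zariski closed and contains the infinite group $\mathrm{St}(W)$, so $H^0$ is a one-dimensional $p$-subgroup of $\G_a^2$. I would then try to show that $\Phi_a$ maps $H^0$ into something commensurable with $H^0$, using that $\exp_\Phi$ intertwines multiplication by $a$ with $\Phi_a$, so that $\mathrm{St}(W)$ becomes stable under a finite-index subring action. Failing that, the fallback is to note what the subsequent argument actually uses. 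It only needs a finite-index group of lattice translations preserving $W$, and for that purpose $\Lambda_1$ may be replaced by any finite-index $\F_q$-subspace. If an $A$-structure is genuinely needed, one can replace $\Lambda_1$ by $\bigcap_{a}a^{-1}\Lambda_1$, where $a$ runs over a finite generating set of $A$-translates needed in the application.
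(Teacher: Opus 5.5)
Your Steps 1 and 2 are correct and are essentially the paper's proof. The paper also fixes a generic $x$ and observes that the sets $e(\lambda)=\exp_{\Lambda'}(W_{x+\lambda})$ all lie in the finite fibre $V_u$. It then uses a common smooth branch and irreducibility to show that $e(\lambda_1)\cap e(\lambda_2)\neq\emptyset$ forces $F(X+\lambda_2-\lambda_1,Y)=F(X,Y+\mu)$ for some $\mu\in\Lambda'$. Finally it takes $\Lambda_1:=\Stab([F])$ for the translation action of $\Lambda$ on the at most $\deg_Y G$ resulting classes. Your $\mathrm{St}(W)$ and the finitely many translates $W-(\lambda,\mu_\lambda)$ are the same argument, phrased for curves rather than polynomials. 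One small correction: since $\mu_\lambda$ is a choice, the fibres of $\lambda\mapsto W_\lambda$ are only \emph{contained} in cosets of $\Lambda_1$, not equal to them. This still gives $[\Lambda:\Lambda_1]\le K$.

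The issue you flag in Step 3 is a genuine gap, and the paper's proof does not close it either. It also stops at a stabiliser, i.e.\ a finite-index additive subgroup of $\Lambda$, and never checks stability under multiplication by $T$. Your proposed repairs do not work.

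Intersecting finitely many $a^{-1}\Lambda_1$ gives $\F_q$-stability, since $\F_q^\times$ is finite, but it does not give $A$-stability. The largest $A$-submodule $\{\lambda: A\lambda\subseteq\Lambda_1\}$ can have infinite index, because a finite-index subgroup of $A$ need not contain any nonzero ideal. For example, take the kernel of $\sum_n f_nT^n\mapsto\sum_{k\ge0}\mathrm{Tr}_{\F_q/\F_p}(f_{2^k})$.

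The route through $H$ and $\Phi_a$ has no mechanism behind it. On the uniformising side $A$ acts by scalars, and $aW$ lies in $\exp_\Phi^{-1}(\Phi_a(V))$ rather than in $\exp_\Phi^{-1}(V)$. So nothing forces $\mathrm{St}(W)$ to be $a$-stable. A posteriori, once $W$ is known to be a line $y=cx+e$, one has $\Lambda_1=\Lambda\cap c^{-1}\Lambda'$, which is an $A$-module, but invoking this here would be circular.

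Your claim that only a finite-index group of translations is used downstream is only half right. The lattice-point counts in the rank comparison and in Proposition~\ref{prop:deg=1} do work for any finite-index subgroup, because the balls $\{x\in\Lambda:\log_q|x|_\infty\le d\}$ are finite groups. However, Lemma~\ref{lem:rat coeff} uses the elementary-divisor normalisation $\Lambda_1=\bigoplus_i a_i\xi_iA$ with $a_1A\subseteq\Lambda_1$, and that genuinely needs an $A$-sublattice.
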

\begin{proof}
Choose some $x\in\A^1_{\C_\infty}$  and let $u=\exp_\Lambda(x)$ such that $W_x$ and $V_u$ are locally given by smooth branches $y_{x,i}$ and $v_{u,i}$. For any $\lambda\in\Lambda$ define  $e(\lambda)=\exp_{\Lambda'}\left(W_{x+\lambda}\right)$. As $\exp_\Lambda$ is $\Lambda$-periodic, one has $e(\lambda)\subset V_u$.

   Assume that $e(\lambda_1)\cap e(\lambda_2)\neq\emptyset$. We claim that $$F(X+\lambda,Y)=F(X, Y+\mu)$$
   for
   $\lambda:=\lambda_2-\lambda_1$ and some $\mu\in\Lambda'$.
   Indeed, the assumption means that  there are   some branches $y_{x+\lambda_1,i}$ and $y_{x+\lambda_2,j}$ and   some $\mu\in\Lambda'$ such that  $y_{x+\lambda_1,i}=y_{x+\lambda_2,j}+\mu$ at $x$ and therefore in some neighborhood of $x$, as the images of these branches under $\exp_{\Lambda'}$  coincide and $\exp_{\Lambda'}$ is locally a homeomorphism. This shows that the two polynomials $F(X+\lambda,Y)$ and $F(X, Y+\mu)$  share a common local branch $y_{x+\lambda_1,i}$. Since $F$ is irreducible, it follows that they coincide up to a multiplicative constant. This constant is necessarily equal to $1$: the coefficient of the leading term (in the sense of, say, lexicographic order) remains the same after such shifts.

   We say that two polynomials $F_1(X,Y)$ and $F_2(X,Y)$ are $y$-equivalent  if $$F_1(X,Y)=F_2(X,Y+\mu)$$ for some $\mu\in\Lambda'$.
The previous argument shows that, for any $\lambda,\lambda'\in \Lambda$, the subsets $e(\lambda)$ and $e(\lambda')$ either coincide or are disjoint. Moreover, in the first case $F(x+\lambda,y)$ and $F(x+\lambda',y)$ are $y$-equivalent.

   Therefore the set of functions $F(x+\lambda, y)$ for $\lambda\in\Lambda$, is a union of  $k_1$ $y$-equivalence classes, $k_1\le\deg_Y(G)$, since the sets $e(\lambda)\subset V_u$ are pairwise disjoint unless equal. The lattice $\Lambda$ acts on this finite set by translation on the $X$ variable.
   Denoting by $\Lambda_1:=\Stab([F])$ the stabilizer of the class of $F$, the result follows.
\end{proof}

Consider two algebraic curves $\Gamma_{1}$ and $\Gamma_2$ in the (affine) space of bi-variate polynomials of bi-degree $\le (\deg_XF,\deg_Y  F)$:
\[
\Gamma_1=\{F(X+t,Y)\},\quad \Gamma_2=\{F(X, Y+s)\}
\]
for $s,t\in\C_\infty$.
By Lemma~\ref{l:3.2}, $\Gamma_1\cap\Gamma_2$ is infinite, as $F(X+\lambda,Y)=F(X,Y+\mu)$ for all $\lambda\in\Lambda_1$ and some suitable $\mu$ depending on $\lambda$. Thus $\Gamma_1=\Gamma_2$.

Define
$$L=\{(x,y)\in \A^2_{\C_\infty} :  F(X+x,Y+y)=F(X,Y)\}.$$
This is an algebraic set and an $\F_q$-vector space. The above means that the projection of $L$ onto the first coordinate is surjective, so $\dim L\ge 1$. Clearly,
$W=W+L$, so, by irreducibility of $W$, $W$ is a coset of $L$ and $\dim L=1$.

Thus, up to a shift such that $0\in W$, we have:
\begin{enumerate}
    \item $W$ is an algebraic set, and an $\F_q$-vector space,
    \item There are two sublattices $\Lambda_1\subset\Lambda$, $\Lambda_1'\subset\Lambda'$ of finite index
    such that $W_{\lambda}\cap \Lambda'\neq\emptyset$ for all $\lambda\in\Lambda_1$ and similarly for the second coordinate.
\end{enumerate}
The first property implies that  $W$ is the zero locus of a $p$-polynomial $P(X)-Q(Y)$,
\begin{equation*}
    W=\big\{(x,y)\in \A^2_{\C_\infty}\;:\; P(x)=Q(y)\big\}
\end{equation*} where, by definition, $P$ and $Q$ are $p$-polynomials, see \cite[Theorem 1.2.1]{goss}.

\begin{lem}
    Let the ranks of $\Lambda, \Lambda'$ be $r, r'$ respectively.  Then
    $$\dfrac r {r'}=\frac{\deg P}{\deg Q}.$$
\end{lem}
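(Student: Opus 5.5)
We count lattice points of $\Lambda\times\Lambda'$ lying on $W=\{P(x)=Q(y)\}$ inside growing balls, and compare the two ways of doing so. Write $\deg P=p^{\alpha}$ and $\deg Q=p^{\beta}$. By property (2) above, there are finite-index sublattices $\Lambda_1\subset\Lambda$ and $\Lambda_1'\subset\Lambda'$ with the following two properties: for each $\lambda\in\Lambda_1$ some $\mu\in\Lambda'$ satisfies $P(\lambda)=Q(\mu)$, and symmetrically for each $\mu\in\Lambda_1'$. Since $W$ is an $\F_q$-vector space through $0$, the set
\[
M:=W\cap(\Lambda\times\Lambda')
\]
is an $\F_q$-subspace. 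Its projection to the first factor contains $\Lambda_1$, and its projection to the second factor contains $\Lambda_1'$.

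\textbf{Step 1: the two projections of $M$ are almost injective.} The fibre of the first projection $M\to\Lambda$ over $0$ is $\{\mu\in\Lambda': Q(\mu)=0\}$, which is finite. Likewise the fibre of the second projection over $0$ is finite. Hence both projections have bounded fibres, with a bound $c_0$ depending only on $P$ and $Q$.

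\textbf{Step 2: counting lattice points in balls.} Let $R\to\infty$. Since $\Lambda$ is a discrete $A$-lattice of rank $r$, the standard count for successive-minimum bases gives
\[
\#\{\lambda\in\Lambda_1: |\lambda|_\infty\le R\}\asymp R^{r},
\qquad
\#\{\mu\in\Lambda': |\mu|_\infty\le S\}\asymp S^{r'}.
\]
Both estimates hold with constants independent of $R$ and $S$, along $q$-power radii.

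\textbf{Step 3: comparing absolute values along $W$.} If $P(\lambda)=Q(\mu)$ with $|\lambda|_\infty$ large, then the leading terms dominate:
\[
|P(\lambda)|_\infty=|a|_\infty|\lambda|_\infty^{p^\alpha},
\qquad
|Q(\mu)|_\infty=|b|_\infty|\mu|_\infty^{p^\beta},
\]
up to finitely many exceptions. Therefore
\[
|\mu|_\infty\asymp|\lambda|_\infty^{\deg P/\deg Q}.
\]

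\textbf{Step 4: upper bound for $r$.} The map $\lambda\mapsto\mu(\lambda)$ from $\Lambda_1\cap\{|\lambda|\le R\}$ sends points into $\Lambda'\cap\{|\mu|\le CR^{\deg P/\deg Q}\}$. By Step 1 this map is at most $c_0$-to-one. Combining with Step 2 gives
\[
R^{r}\ll R^{r'\deg P/\deg Q}.
\]
Letting $R\to\infty$ yields $r\le r'\deg P/\deg Q$.

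\textbf{Step 5: the reverse inequality.} Apply the same argument in the other direction, using $\Lambda_1'$ and the second projection. This gives $r'\le r\deg Q/\deg P$. Together with Step 4, we conclude $r/r'=\deg P/\deg Q$.

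The main obstacle is the lattice-point count in Step 2 over $\C_\infty$. The fix is to use a reduced, successive-minimum basis of $\Lambda$, as in \cite{bpr}. For such a basis, $\bigl|\sum_i a_i\omega_i\bigr|_\infty=\max_i|a_i|_\infty|\omega_i|_\infty$, so the number of lattice points of absolute value at most $R$ is exactly the number of tuples $(a_i)\in A^r$ with $\deg a_i\le\log_q R-\log_q|\omega_i|_\infty$. This number is $\asymp R^{r}$.

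Passing to the finite-index sublattice $\Lambda_1$ only changes the constants. Step 3 is a routine ultrametric estimate once $|\lambda|_\infty$ exceeds the size of the roots of the relevant lower-order terms.
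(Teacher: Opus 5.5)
Your proposal is correct and is essentially the paper's argument: compare $|x|_\infty$ and $|y|_\infty$ along $W$ through the leading terms of $P$ and $Q$, map the points of $\Lambda_1$ in a ball of radius $q^d$, boundedly-to-one, into points of $\Lambda'$ in a ball of radius $q^{d\deg P/\deg Q+O(1)}$, use the $q^{rd}$ growth of lattice-point counts, and then symmetrize. The only cosmetic difference is how the fibres are bounded: you use the finite kernels of $P$ and $Q$, via the group structure of $W\cap(\Lambda\times\Lambda')$, while the paper uses that $P(x)=c$ has at most $\deg P$ solutions.
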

\begin{proof}

    Denote  $\alpha=\frac{\deg Q}{\deg P}$.
If $P(x)+Q(y)=0$ then
\[
\deg_X(P)\,\log_q|x|_\infty = \deg_Y(Q)\,\log_q|y|_\infty + O(1),
\]
hence
\begin{equation}\label{e:size}
\log_q|x|_\infty = \alpha\,\log_q|y|_\infty + O(1).
\end{equation}
Denote $b(d)=\#\{x\in\Lambda, \log_q|x|_\infty\le d\}$ and similarly $b'(d)=\#\{y\in\Lambda', \log_q|y|_\infty\le d\}$. Also, denote by
$$b_1(d):=\#\{x\in\Lambda_1, \log_q|x|_\infty\le d\}$$
and
$$b_1^*(d):=\#\{y\in\Lambda'|\,    (\log_q|y|_\infty\le d)\wedge(\exists x\in\Lambda_1\,:\,  P(x)+Q(y)=0)\}.
$$
By the definition of $\Lambda_1$,  for every $x\in\Lambda_1$ there exists a $y\in\Lambda'$ such that $P(x)+Q(y)=0$.
Therefore
\begin{equation*}
 q^{r'd}O(1)= b'(d)\ge b_1^*(d)\ge \frac{1}{\deg P} b_1(\alpha d+O(1))
 \ge \frac{1}{\deg_X(P)[\Lambda:\Lambda_1]} b(\alpha d+O(1))=q^{r\alpha d}O(1).
\end{equation*}
Similarly, one obtains $q^{r\alpha d}\ge q^{r'd}O(1)$. Taking $d\to\infty$, we conclude $r'=\alpha r$.
\end{proof}

From now on, we assume that $\rk \Lambda=\rk\Lambda'=r$ and denote $n=\deg P=\deg Q=p^m$.
\begin{prop}\label{prop:deg=1}
    If $\rk\Lambda=\rk\Lambda'=r$ then $\deg P=\deg Q=1$ and  $W$ is a line.
\end{prop}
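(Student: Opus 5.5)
The plan is to exploit the $A$-module structure of the lattices. Under the equal rank hypothesis this will force $W$ to be stable under multiplication by $T$, and then by all scalars, so that $W$ is a $\C_\infty$-linear line. Throughout, normalise by a shift so that $0\in W$. By the discussion above, $W$ is then an irreducible one-dimensional algebraic subgroup of $\G_{a,\C_\infty}^2$, cut out by $P(x)=Q(y)$ with $\deg P=\deg Q=n=p^m$.

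Set $M:=W\cap(\Lambda\times\Lambda')$. By Lemma~\ref{l:3.2} and property (2), the first projection of $M$ contains the finite index sublattice $\Lambda_1$. The fibre of $W$ over any $x$ is a coset of the finite group $\ker Q$, of order at most $n$. Hence the fibres of $M\to\Lambda_1$ have at most $n$ elements, and the same holds for the second projection.

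First I show $TW=W$, where $TW=\{(Tx,Ty):(x,y)\in W\}=\{P(T^{-1}x)=Q(T^{-1}y)\}$. Since $\Lambda_1$ is an $A$-module, $T\Lambda_1\subset\Lambda_1$. So for every $\lambda\in T\Lambda_1$ there are two kinds of points above $\lambda$: points of $M\subset W$, and points of $TM\subset TW$, whose second coordinates lie in $T\Lambda'\subset\Lambda'$. Both kinds of points lie in the fibre over $\lambda$ of $\C_\infty\times\Lambda'$, but a priori $W$ and $TW$ need not share any of them. I expect this to be the main obstacle: producing infinitely many common points, uniformly in $\lambda$.

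To overcome it, first try the counting method used in the previous lemma. Use the size relation \eqref{e:size} with $\alpha=1$: on $W$ one has $\log_q|y|_\infty=\log_q|x|_\infty+O(1)$, and on $TW$ likewise with a shifted constant. Fix $d$. The points of $M$ and of $TM$ lying over $\lambda\in T\Lambda_1$ with $\log_q|\lambda|_\infty\le d$ all have second coordinate in the ball of $\Lambda'$ of radius $q^{d+O(1)}$, which contains $O(q^{rd})$ elements. On the other hand, each of $M$ and $TM$ contributes $\gg q^{rd}$ such points, since $T\Lambda_1$ has finite index in $\Lambda$.

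Second coordinates are in bounded-to-one correspondence with points of each curve, by the finiteness of the fibres of the second projection. So if the two sets of second coordinates were nearly disjoint, we would need about $2\cdot c\,q^{rd}$ elements of $\Lambda'$ where only $C\,q^{rd}$ are available. This comparison alone is not decisive, because $c$ versus $C/2$ must be controlled. The fallback is to use the pigeonhole principle over the finitely many $\ker Q$-cosets, together with the finite-index structure (the $y$-equivalence classes of Lemma~\ref{l:3.2}). The aim is to show that some translate $TW+(0,\nu)$ with $\nu\in\Lambda'$, or $TW$ itself after using $0\in W$, meets $W$ in infinitely many points. Since both are irreducible curves and are subgroups, an infinite intersection forces equality. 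Translating back using that $W$ is a group through $0$ gives $TW=W$.

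Finally, let $\Sigma:=\{c\in\C_\infty^\times: cW=W\}$. This is a Zariski closed subgroup of $\G_m$, and it contains $T$. Since $T$ is not a root of unity, $\Sigma$ is infinite, hence $\Sigma=\G_m$. A one-dimensional irreducible subvariety of $\A^2$ through $0$ that is stable under all homotheties is a line through the origin. Hence $W$ is linear and $P,Q$ can be taken of degree $1$. As a consistency check, if $\deg P=p^m>1$, then $T$-stability would give the relation $P(Tx)=c\,Q(Ty)$ on $W$. Comparing the coefficients of $x^{p^m}$ and $x$ would force $T^{p^m}=T$ up to units, which is impossible.
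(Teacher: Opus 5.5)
There is a genuine gap: everything rests on the claim $TW=W$, and you do not prove it. Your counting step shows only that the sets of second coordinates of $M$ and $TM$ over $T\Lambda_1$ overlap heavily. A common value $y$ with $(\lambda_1,y)\in W$ and $(\lambda_2,y)\in TW$ gives no point of $W\cap TW$ unless $\lambda_1=\lambda_2$. Counting pairs does not help either. Each of $M$ and $TM$ has $\asymp q^{rd}$ points in a box of $T\Lambda_1\times\Lambda'$ that contains $\asymp q^{2rd}$ lattice points, so no pigeonhole forces them to meet, however well the constants are controlled. Indeed, the data you use (growth $\asymp q^{rd}$, bounded fibres, $\log_q|y|_\infty=\log_q|x|_\infty+O(1)$) are equally satisfied by two disjoint graphs of bijections between finite-index sublattices.

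The fallback is circular. Since $W$ and $TW$ are algebraic subgroups, $(TW+(0,\nu))\cap W$ is either empty or a coset of $W\cap TW$. That intersection is finite unless $TW=W$. So finding $\nu$ with infinite intersection is equivalent to the claim you want. Your endgame is fine: the stabiliser of $W$ in $\G_m$ is a closed subgroup containing $T$, hence all of $\G_m$, so $W$ is a line through the origin.

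What is missing is an arithmetic input tying $W$ to the $F$-structure of the lattices. The paper supplies it in three steps:
\begin{enumerate}
\item Lemma~\ref{lem:rat coeff} shows that $P$ and $Q$ may be taken over $F$.
\item Lemma~\ref{lem:c_n=c^n} shows that the ratio of leading coefficients is $c^n$ with $c=a/b\in F$.
\item Corollary~\ref{cor:equal lattices} shows that the $F$-spans of $\Lambda$ and $\Lambda'$ coincide, so $b\Lambda+a\Lambda'$ still has rank $r$.
\end{enumerate}
With these, the substitution $Z=bX-aY$ kills the top-degree terms, since $(bX)^n-(aY)^n=(bX-aY)^n$. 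This yields $z^n-\tilde P_1(z)=\tilde Q_1(y)$ with $\log_q|z|_\infty\le\frac{n-1}{n}\log_q|y|_\infty+O(1)$.

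Now compare counts. If $\tilde Q_1$ is non-constant, there are $\ll q^{r(n-1)d/n}$ possible $z$ in the rank-$r$ lattice $b\Lambda+a\Lambda'$, each giving at most $n-1$ values of $y$. Against this, there are $\gg q^{rd}$ values $y\in\Lambda_1'$. Hence $\tilde Q_1$ is constant, and irreducibility gives $n=1$. The exponent gap $(n-1)/n<1$, which comes from these $F$-rationality results, is exactly what your argument lacks.
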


The following lemma is well-known.
\begin{lem}
    Given  a pair $\Lambda_1\subset\Lambda$, $\rk \Lambda=r$, there exist a set $\xi_1,...,\xi_r\in \C_\infty$ and elements $a_1,...,a_r\in A$
    such that $\Lambda=\bigoplus_i \xi_iA$ and $\Lambda_1=\bigoplus_i a_i\xi_iA$.
\end{lem}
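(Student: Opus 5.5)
This is the elementary divisor (Smith normal form) theorem for a finitely generated submodule of a free module over the principal ideal domain $A=\F_q[T]$. The plan is to transport the usual PID argument to lattices.

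First, I fix an $A$-basis $\eta_1,\dots,\eta_r$ of $\Lambda$. This identifies $\Lambda$ with $A^r$ via $\sum c_i\eta_i\mapsto(c_i)$. Under this identification $\Lambda_1$ is an $A$-submodule of $A^r$. Since $A$ is Noetherian, $\Lambda_1$ is finitely generated. Since $A$ is a PID, $\Lambda_1$ is free, and its rank is $r$ because it has finite index $k_1$ (Lemma~\ref{l:3.2}). I then choose an $A$-basis $\theta_1,\dots,\theta_r$ of $\Lambda_1$ and write $\theta_j=\sum_i m_{ij}\eta_i$ with $M=(m_{ij})\in \M_r(A)$. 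Finite index forces $\det M\neq0$.

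Second, I apply the Smith normal form over the Euclidean domain $A$. There are $U,V\in\gl_r(A)$ with $UMV=\operatorname{diag}(a_1,\dots,a_r)$, where $a_1\mid a_2\mid\cdots\mid a_r$ and all $a_i\neq0$. The reduction uses elementary row and column operations driven by division with remainder on $\deg_T$: move an entry of minimal degree to position $(1,1)$, clear its row and column, and repeat until that entry divides everything, then induct on $r$. Changing the basis of $\Lambda$ by $U^{-1}$ and the basis of $\Lambda_1$ by $V$ gives new bases. Setting $\xi_i:=\sum_k (U^{-1})_{ki}\eta_k$, I get $\Lambda=\bigoplus_i A\xi_i$, because $U$ is invertible over $A$. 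The transformed basis of $\Lambda_1$ is then exactly $a_1\xi_1,\dots,a_r\xi_r$, so $\Lambda_1=\bigoplus_i a_i\xi_iA$.

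There is no real obstacle. The only point needing a word is that $\Lambda_1$ has full rank $r$, so that every $a_i$ is nonzero. This follows from $[\Lambda:\Lambda_1]<\infty$. Note also that $\#(\Lambda/\Lambda_1)=\prod_i|a_i|_\infty$, which is the form in which the lemma will be used in the subsequent counting.
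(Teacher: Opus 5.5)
Your proof is correct. It is the standard elementary divisor (stacked bases) argument over the principal ideal domain $A=\F_q[T]$, which is what the paper relies on when it calls this lemma well-known and gives no proof, and your change of basis $\xi_i=\sum_k (U^{-1})_{ki}\eta_k$ is computed correctly. Two small remarks on hypotheses: finite index is only used to make every $a_i\neq 0$, since for an $A$-submodule of rank $s<r$ the Smith normal form still gives the statement with $a_{s+1}=\dots=a_r=0$; what the argument does genuinely need is that $\Lambda_1$ is an $A$-submodule and not merely an additive subgroup, which is exactly what Lemma~\ref{l:3.2} asserts.
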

By a homothety $x\mapsto \xi_1^{-1}x$ we  can assume that $a_1A\subset\Lambda_1$. Similarly, we can and will assume from now on that
\begin{equation*}
a_1'A\subset    W(\Lambda_1)=\{y\in\Lambda'\ : \ \exists x\in a_1A \ \,\text{such that}\,\ (x,y)\in W\}.
\end{equation*}
\begin{lem}\label{lem:rat coeff}
    The coefficients of $P,Q$ can be taken to be in $A$.
\end{lem}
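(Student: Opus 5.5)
The plan is to exploit the fact that $W=\{P(x)=Q(y)\}$ is an $\F_q$-vector space containing many lattice points. After the normalizations above, $W$ contains points $(x,y)\in\Lambda\times\Lambda'$ with $x$ ranging over all of $a_1A$ (times the normalized basis vector $\xi_1=1$) and $y$ ranging over $a_1'A$ (with $\xi_1'=1$). Since $\deg P=\deg Q=n=p^m$, the curve $W$ is a one-dimensional $\F_q$-subspace of $\A^2$. So the set $W(\Lambda_1)$ of such pairs is an $\F_q$-subspace, and in fact an $A$-module: we first check that $W$ is stable under multiplication by $T$ on both coordinates after passing to a finite-index sublattice, using the same finiteness-of-translates argument as in Lemma~\ref{l:3.2}.

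Concretely, we would first normalize $P$ and $Q$ so that one coefficient, say the leading coefficient of $P$, equals $1$. This removes the scalar ambiguity in the defining equation of $W$.

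Next we produce many points of $W$ with coordinates in $A$. For each $x\in a_1A$ there is some $y\in\Lambda'$ with $P(x)=Q(y)$. Restricting to $x\in a_1A$ and $y\in a_1'A$, the pairs realize infinitely many $(x,y)\in A^2$ on $W$. More precisely, the map $x\mapsto y$ defined up to the finite kernel of $Q$ gives, by counting as in the previous lemma, at least $c\,q^{d}$ pairs $(x,y)\in A^2\cap W$ with $\deg x\le d$.

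We then solve for the coefficients. Write $P(X)=\sum_{j\le m}c_jX^{p^j}$ and $Q(Y)=\sum_{j\le m}c'_jY^{p^j}$. The conditions $P(x)-Q(y)=0$ for the pairs above form a homogeneous linear system with coefficients $x^{p^j}, y^{p^j}\in A$ in the $2m+2$ unknowns $c_j,c'_j$. Its solution space over $\C_\infty$ is exactly the line spanned by the true coefficient vector. If it were larger, the corresponding combinations would define a second $p$-polynomial relation, cutting out a proper subset of the curve $W$ containing infinitely many of its points, which contradicts irreducibility and $\dim W=1$. A linear system with coefficients in $F$ whose solution space over $\C_\infty$ is one-dimensional has that solution space spanned by a vector in $F^{2m+2}$, because the rank is unchanged by field extension. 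Therefore the normalized coefficient vector lies in $F^{2m+2}$, and clearing denominators puts the coefficients in $A$.

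The main obstacle is the uniqueness step: showing that the lattice points on $W$ impose enough conditions to pin down the coefficient line. The risk is that all the $(x,y)\in A^2\cap W$ might also satisfy some other additive relation. To handle this, we would argue that the $\F_q$-span of these points is Zariski dense in the irreducible curve $W$, since there are infinitely many of them. Any $p$-polynomial $P_2(X)-Q_2(Y)$ of degree at most $n$ vanishing on them then vanishes on $W$. Since the defining ideal of $W$ is generated by $P(X)-Q(Y)$ (we would take $W$ reduced, with $P-Q$ separable in an appropriate sense), $P_2-Q_2$ must be a scalar multiple of $P-Q$ for degree reasons.
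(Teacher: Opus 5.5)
Your argument is essentially the paper's. The points of $W\cap(a_1A\times a_1'A)$ impose a linear system with coefficients in $A$ on the coefficients of $(P,Q)$; its solution space is one-dimensional by irreducibility of $W$ and the degree bound, so it is spanned by an $F$-rational vector. Your use of $I(W)=(P-Q)$ for the uniqueness step is, if anything, more careful than the paper's ``same degree'' remark.

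The aside that $W$ (or its lattice points in a finite-index sublattice) is stable under multiplication by $T$ is not something the translate argument of Lemma~\ref{l:3.2} provides. It would already force $W$ to be a line, which is the content of Proposition~\ref{prop:deg=1}, and that proposition is proved using this lemma. Since nothing in your argument uses the aside, simply drop it: all you need is that $W\cap(a_1A\times a_1'A)$ is infinite, which the normalization $a_1'A\subset W(\Lambda_1)$ guarantees.
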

\begin{proof}
    A point $(x,y)\in W\cap(a_1A\times a_1'A)$ produces a linear equation $P(x)=Q(y)$ with coefficients in $A$ on the polynomials $P,Q$.
    The set of such linear equations corresponding to all points of $W\cap\left(a_1A\times a_1'A\right)$ defines a subspace of the space $\C_{\infty,n}[X,Y]$ of bivariate polynomials of bi-degree $\le n$ definable over $A$. The dimension of this subspace is equal to the codimension of $W$, i.e., to $1$: indeed, two non-proportional sets of coefficients would define two different curves, both intersecting $W$  at the infinite set  $W\cap\left(a_1A\times a_1'A\right)$, thus containing $W$, and therefore equal to $W$ as being of the same degree.
    Thus, the polynomials $P,Q$ are uniquely defined up to a common factor, and, therefore, can be taken to be in $F$. Multiplying by the common denominator, they can be made polynomial.
\end{proof}

Dividing $P,Q$ by the leading terms of $P$, we can write $P(X)=X^{n}+P_1(X)$ and $Q(Y)=c_n Y^{n}+Q_1(Y)$, where  $n:=p^m$, $\deg P_1, \deg Q_1\le n-1$, and $c_n\in \F_q(T)$.

\begin{lem}\label{lem:c_n=c^n}
 We have   $c_n=c^n$ for some $c\in\F_q(T)$.
\end{lem}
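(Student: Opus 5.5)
The plan is to show that $c_n$ is an $\infty$-adic limit of $n$-th powers of elements of $F$. We then use that $n=p^m$-th powers form a closed subfield of $F_\infty$ whose intersection with $F$ is exactly $F^{n}$.

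\emph{Step 1: producing many integral points on $W$.} By our normalization, $a_1'A\subset W(\Lambda_1)$. So for every $y\in a_1'A$ there is $x\in a_1A$ with $(x,y)\in W$, i.e.\ $P(x)=Q(y)$. By Lemma~\ref{lem:rat coeff} the coefficients of $P$ and $Q$ lie in $F$, so both $x$ and $y$ lie in $A\subset F$. We choose a sequence $y_k\in a_1'A$ with $|y_k|_\infty\to\infty$ and corresponding $x_k\in a_1A$. Since $\deg P=\deg Q=n$, the estimate \eqref{e:size} (with $\alpha=1$) gives $\log_q|x_k|_\infty=\log_q|y_k|_\infty+O(1)$, so $|x_k|_\infty\to\infty$ as well.

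\emph{Step 2: the approximation.} From $x_k^n+P_1(x_k)=c_ny_k^n+Q_1(y_k)$ we get
\[
c_n-\Bigl(\frac{x_k}{y_k}\Bigr)^{n}=\frac{P_1(x_k)-Q_1(y_k)}{y_k^{n}}.
\]
Here $P_1$ and $Q_1$ are $p$-polynomials of degree $\le n/p<n$, and $|x_k|_\infty\asymp|y_k|_\infty$. Hence the right-hand side has absolute value $O(|y_k|_\infty^{n/p-n})\to0$. Therefore $c_n$ lies in the closure in $F_\infty$ of $F^{n}\subset F_\infty^{n}$.

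\emph{Step 3: closedness and descent.} Since $n=p^m$, we have $F_\infty^{n}=\F_q(\!(1/T^{n})\!)$ because $\F_q$ is perfect. This is a complete subfield, hence closed in $F_\infty$. So $c_n=c^n$ for some $c\in F_\infty$. Such a $c$ satisfies $c^{n}\in F$, so $c\in F^{1/n}=\F_q(T^{1/n})$. The extension $\F_q(T^{1/n})/F$ is purely inseparable, and its subextensions are the fields $\F_q(T^{1/p^k})$. Thus $F(c)=\F_q(T^{1/p^k})$ for some $k\ge0$, and we must exclude $k\ge1$.

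If $k\ge1$, then $T^{1/p}\in F_\infty$, so $T\in F_\infty^{p}=\F_q(\!(1/T^p)\!)$, which is absurd since $T$ is not a Laurent series in $T^{-p}$. Hence $c\in F_\infty\cap F^{1/n}=F$.

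The main obstacle is Step~2: one must make sure that the points of $W$ used have \emph{both} coordinates in $F$, so that $x_k/y_k\in F$. This is exactly where the normalization $a_1'A\subset W(\Lambda_1)$, together with the rationality of $P$ and $Q$, is used.
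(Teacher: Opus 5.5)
Your proof is correct and follows essentially the same route as the paper: you approximate $c_n$ by the $n$-th powers $(x_k/y_k)^{n}$ coming from points of $W\cap(a_1A\times a_1'A)$, deduce $c_n\in F_\infty^{n}=\F_q(\!(1/T^{n})\!)$, and then intersect with $F$. The one difference is that you justify the final equality $F\cap\F_q(\!(1/T^{n})\!)=F^{n}$, via the chain of intermediate fields of $\F_q(T^{1/n})/F$ and the fact that $T^{1/p}\notin F_\infty$, whereas the paper simply asserts it; one small correction is that $x_k,y_k\in A$ holds because they lie in $a_1A$ and $a_1'A$, while the rationality of $P,Q$ is what gives $c_n\in F$.
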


\begin{proof}

Let $x,y\in A$ such that $P(x)=Q(y)$.
Write
\begin{equation*}
    c_n=\left(\frac x y \right)^n+\frac{P_1(x)-Q_1(y)}{y^n}:=\left(\frac x y \right)^n+\epsilon.
\end{equation*}
Recall that $\log_q|x|_\infty=\log_q|y|_\infty+O(1)$. Thus $$\log_q|P_1(x)|_\infty\le (n-1)\log_q|x|_\infty+O(1)\;,\;\;\;\log_q|Q_1(y)|_\infty\le (n-1)\log_q|y|_\infty+O(1)$$ and
\begin{equation*}
    c_n=\sum_{j=-\beta}^\infty \gamma_jT^{-j}+\epsilon, \quad \gamma_j\in\F_q, \quad \log_q|\epsilon|_\infty\le -\log_q|x|_\infty+O(1)
\end{equation*}
for some $\beta\in \Z$.
Now, taking a sequence of pairs $(x_k,y_k)\in W\cap(a_1A\times a_1'A)$ with $\log_q|x_k|_\infty\to\infty$, we get $\log_q|\epsilon_k|_\infty\to -\infty$.
For $(x_k,y_k)\in W\cap(a_1A\times a_1'A)$ set $u_k:=x_k/y_k\in\F_q(T)$, so $c_n=u_k^n+\epsilon_k$ with
$|\epsilon_k|_\infty\to 0$. Hence for every $N\ge 1$ there exists $M$ such that for all pairs $(x_k,y_k)$ with $\log_q|x_k|_\infty\ge M$ one has
\[
c_n\equiv u_k^n \bmod{1/T^N}\;\;\;\text{in}\;\; F_\infty.
\]
But $u^n=u^{q^m}$ is a Frobenius $q^m$-th power, so its Laurent expansion involves only exponents
divisible by $n$; letting $N\to\infty$ forces $c_n\in\F_q((1/T^n))$.
Since $\F_q(T)\cap \F_q((1/T^n))=\F_q(T^n)$, we get $c_n\in\F_q(T^n)=(\F_q(T))^n$, i.e., $c_n=c^n$
for some $c\in\F_q(T)$.

\end{proof}

\begin{lem}\label{lem:xi in L'}
   Let $W=\{X^n+P_1(X)=c^nY^n+Q_1(Y)\}$, with $P_1, Q_1\in\F_q(T)[X]$, $\deg P_1,\deg Q_1<n$ and $c\in\F_q(T)$. Assume that there is  $\xi\in\C_\infty$ such that for any $x\in A\xi$ there is a point $y\in\Lambda'$ such that $(x,y)\in W$. Then $\xi \in \Lambda'\otimes_A\F_q(T)$.
\end{lem}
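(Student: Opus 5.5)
We aim to show that the points of $W$ lying over $A\xi$ force $\xi$ into the $F_\infty$-span of $\Lambda'$, and then upgrade this to the $F$-span. As in Lemma~\ref{lem:c_n=c^n}, the approach exploits the fact that in characteristic $p$ the top-degree part of the defining equation is a pure $n$-th power.

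\emph{Step 1: asymptotic proximity.} For $a\in A$ pick $y_a\in\Lambda'$ with $(a\xi,y_a)\in W$. Since $n=p^m$, we have
\[
X^n-c^nY^n=(X-cY)^n,
\]
so the equation of $W$ reads $(a\xi-cy_a)^n=Q_1(y_a)-P_1(a\xi)$. The size relation \eqref{e:size} with $\alpha=1$ gives $|y_a|_\infty\asymp|a|_\infty$. Since $\deg P_1,\deg Q_1<n$, taking $n$-th roots of absolute values yields
\[
|a\xi-cy_a|_\infty\le C\,|a|_\infty^{1-1/n}.
\]

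\emph{Step 2: $\xi\in\Lambda'\otimes_AF_\infty$.} Let $V':=\Lambda'\otimes_AF_\infty\subset\C_\infty$. This is a finite-dimensional $F_\infty$-subspace, hence closed, and it contains $y_a$ and is stable under multiplication by $c\in F$. Dividing the bound of Step~1 by $|a|_\infty$ shows that the distance from $\xi$ to $c\,y_a/a\in V'$ is at most $C|a|_\infty^{-1/n}$, which tends to $0$ as $\deg a\to\infty$. Hence $\xi\in V'$. Writing $\xi/c=\sum_i\beta_i\xi'_i$ with $\beta_i\in F_\infty$ and $y_a=\sum_i m_i(a)\xi'_i$ with $m_i(a)\in A$, discreteness of $\Lambda'$ makes the sup-norm of coordinates equivalent to $|\cdot|_\infty$ on $V'$. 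Therefore
\[
|a\beta_i-m_i(a)|_\infty\le C'|a|_\infty^{1-1/n}\qquad\text{for all } a\in A.
\]

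\emph{Step 3: rationality of the $\beta_i$.} This is the main obstacle, since the error allowed in Step~2 grows with $|a|_\infty$.

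First, $W$ is an $\F_q$-subspace (being the zero set of a $p$-polynomial). Hence $y_{a+b}-y_a-y_b$ lies in $\ker Q\cap\Lambda'$, a finite set $E$. So $a\mapsto m_i(a)$ is $\F_q$-linear up to a bounded error, and after passing to a finite-index $\F_q$-subspace of $A$ we may assume it is exactly $\F_q$-linear.

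I would then run the digit argument of Lemma~\ref{lem:c_n=c^n} more carefully rather than using only the crude bound. I would expand $(a\xi-cy_a)^n=Q_1(y_a)-P_1(a\xi)$ and substitute $y_a=a\xi/c+\delta_a$. The right-hand side is then a polynomial in $a\xi$ and $\delta_a$ with coefficients in $F$ and degree $<n$, and this should allow a bootstrap. Each iteration should improve the bound on $\delta_a$: first to $|a|_\infty^{1-2/n}$, and then eventually to $O(1)$. Once $|a\beta_i-m_i(a)|_\infty=O(1)$ for all $a$, I would take $a=T^k$. Then $m_i(T^k)$ records all Laurent digits of $T^k\beta_i$ above a fixed level. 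Combined with exact $\F_q$-linearity of $a\mapsto m_i(a)$, this should give
\[
m_i(T^{k+1})-T\,m_i(T^k)=O(1)\in A.
\]
This takes only finitely many values, so the digit sequence of $\beta_i$ is eventually periodic, i.e.\ $\beta_i\in F$.

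Together with $c\in F$, this gives $\xi=c\sum_i\beta_i\xi'_i\in\Lambda'\otimes_AF$. If the bootstrap in Step~3 fails to reach $O(1)$, the fallback is a counting argument in the spirit of the rank lemma: compare $\#\{a:\deg a\le k\}$ with the number of lattice points $m_i(a)$ available under a sublinear error. The aim there would be to show that $\beta_i$ has bounded-height rational approximations of every level, which again forces $\beta_i\in F$.
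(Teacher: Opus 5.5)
Your Steps~1 and~2 are correct, but they only show $\xi\in\Lambda'\otimes_A F_\infty$. Step~3, where the content of the lemma lies, has a genuine gap.

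\emph{The approximation data cannot detect rationality.} For \emph{every} $\beta\in F_\infty$, the polynomial part $m(a)$ of $a\beta$ defines an exactly $\F_q$-linear map $A\to A$ with $|a\beta-m(a)|_\infty\le q^{-1}$ for all $a$. Moreover, $m(T^{k+1})-T\,m(T^k)$ is simply the coefficient of $T^{-k-1}$ in $\beta$. It always lies in $\F_q$, so ``takes finitely many values'' says nothing about eventual periodicity of the digits. Your fallback fails for the same reason, since every element of $F_\infty$ has rational approximations of every level (its truncations).

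\emph{The bootstrap does not reach $O(1)$ in general.} Substituting $y_a=a\xi/c+\delta_a$ and using additivity gives exactly
\[
c^n\delta_a^n+Q_1(\delta_a)=P_1(a\xi)-Q_1(a\xi/c)=:R(a\xi),
\]
with $R$ a fixed $p$-polynomial of degree $<n$. If $R\neq0$ (and $\xi\neq0$), its leading term dominates. Then $|\delta_a|_\infty\asymp|a|_\infty^{\deg R/n}\to\infty$, and no iteration can improve the bound. If $R=0$, the $\delta_a$ lie in the finite set of roots of $c^nX^n+Q_1$. Pigeonholing $\delta_{T^k}\equiv T^k\delta_1\pmod{\Lambda'}$ then does give $\delta_1\in\Lambda'\otimes_AF$. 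But that argument uses exactness rather than size, and nothing in your proposal excludes $R\neq0$.

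What is missing is to track exact coefficients in $F$ rather than absolute values in $\C_\infty$. The paper divides $P(a\xi)=Q(y_a)$ by $a^n$ to get
\[
\xi^n=\sum_i\Bigl(\frac{c\,m_i(a)}{a}\Bigr)^n\xi_i'^{\,n}+\epsilon(a).
\]
Here $\epsilon(a)=\bigl(Q_1(y_a)-P_1(a\xi)\bigr)/a^n$ is an explicit $F$-linear combination of the finitely many fixed elements $\xi^{p^j}$ and $\xi_i'^{\,p^j}$ ($j<m$), with coefficients tending to $0$ as $\deg a\to\infty$.

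Everything lives in the finite-dimensional $\F_q(T^n)$-vector space spanned by the ($\F_q(T^n)$-independent) $\xi_i'^{\,n}$ and these elements, and $(c\,m_i(a)/a)^n\in\F_q(T^n)$. Choose a basis extending $\{\xi_i'^{\,n}\}$. By uniqueness, the coordinates of $\xi^n$ off the $\xi_i'^{\,n}$ are independent of $a$. Yet, through a fixed change of basis, they are bounded by a constant times the coefficients of $\epsilon(a)$, so they vanish. Thus $\xi^n\in\sum_i\F_q(T^n)\,\xi_i'^{\,n}$, and extracting $n$-th roots using $\F_q(T^n)=\F_q(T)^n$ gives $\xi\in\Lambda'\otimes_AF$.

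The point is that smallness of coordinates over the non-complete field $\F_q(T^n)$ forces exact vanishing. Smallness in $\C_\infty$, as in your Step~2, can only ever yield the $F_\infty$-statement.
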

\begin{proof}

Denote $L=\F_q(T)\xi$ and $L'=\Lambda'\otimes_A\F_q(T)$.
For  $k\in\mathbb{N}$ denote $L^{\le k}=\bigoplus_{j=0}^k L^{p^j}$, a $\F_q(T)$-linear subspace of $\C_\infty$ spanned by $l^{p^j}$ for $l\in L$ and $0\le j\le k$.

For $x=p\xi\in A\xi$, $\deg p=d$, let $y=\sum_{i=1}^r q_i\varsigma_i\in\Lambda'$, $q_i\in A$,
be such that $(x,y)\in W$. Note that $d_i=\deg q_i\le d+O(1)$ by \eqref{e:size}.
Then
\begin{equation*}
p^n\xi^n=c^n\sum_{i=1}^r q_i^n\varsigma_i^n + Q_1(y)-P_1(x)=c^n\sum_{i=1}^r q_i^n\varsigma_i^n +\phi(p), \quad  \log_q|\phi(p)|_\infty\le (n-1)d+O(1),
\end{equation*}
so, denoting $u_i=\dfrac{c q_i}{p}$,
\begin{equation}\label{eq:norms in L=L'}
    \xi^n=\sum_{i=1}^r u_i^n\varsigma_i^n+ \epsilon(p),\quad \epsilon(p)\in L^{\le m-1}\oplus(L')^{\le m-1},\, |\epsilon(p)|_\infty=O(q^{-d}).
\end{equation}

We consider $L^{\le m-1}\oplus(L')^{\le m-1}$  as a (finite-dimensional)  $\F_q(T^n)$-linear space and the above relation as an equality in the $\F_q(T^n)$-linear space $\tilde{L}$ spanned by $\varsigma_1^n,\dots,\varsigma_r^n$ and $L^{\le m-1}\oplus(L')^{\le m-1}$  (recall that $\F_q(T)^n=\F_q(T^n)$, so $u_i^n\in\F_q(T^n)$ and  $\xi^n\in\tilde{L}$).

Clearly,
$\varsigma_1^n,\dots,\varsigma_r^n$ are $\F_q(T^n)$-linearly independent: a non-trivial relation
$\sum b_i(T^n)\varsigma_i^n=0$ for some tuple $b_i(T^n)\in\F_q(T^n)$ implies a non-trivial relation
$\sum \tilde{b}_i(T)\varsigma_i=0$, with $\tilde{b}_i(T)\in\F_q(T)$, and the latter would contradict the assumption $\rk\Lambda=r$.

Choose a  basis
$\mathcal{B}=\{\varsigma_1^n,...,\varsigma_r^n\}\cup \mathcal{B}'$ of $\tilde{L}$ and write $\xi^n$ in this basis:
\begin{equation}\label{eq:decomposition in L=L'}
    \xi^n=\sum_{i=1}^r a_i\varsigma_i^n+ \sum_{\beta'_i\in\mathcal{B}'} a'_i\beta'_i,\quad a_i, a'_i\in \F_q(T^n).
\end{equation}
Any decomposition  \eqref{eq:norms in L=L'}, implies a decomposition \eqref{eq:decomposition in L=L'} with $|a'_i|_\infty=O(|\epsilon(p)|_\infty)=O(q^{-d})$. But $d$ can be arbitrarily large, and the decomposition \eqref{eq:decomposition in L=L'} is unique, which implies $a'_i=0$.

Therefore
\begin{equation*}
    \xi^n=\sum_{i=1}^r a_i(T^n)\varsigma_i^n,\quad a_i(T^n)\in \F_q(T^n)
\end{equation*}
and, by $\F_q(T^n)=\big(\F_q(T)\big)^n$,
\begin{equation*}
    \xi=\sum_{i=1}^r \tilde{a}_i\varsigma_i\in L',\quad \tilde{a}_i\in \F_q(T).
\end{equation*}
\end{proof}

Returning to our setting, we get the following corollary.

\begin{cor}\label{cor:equal lattices}
    The $\F_q(T)$-linear spaces $L=\Lambda\otimes_A\F_q(T)$ and $L'=\Lambda'\otimes_A\F_q(T)$ spanned by $\Lambda$ and $\Lambda'$  coincide. In particular, the lattice $a\Lambda+b\Lambda'$ has rank $r$ for any $a,b\in A$.
\end{cor}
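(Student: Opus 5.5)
The plan is to apply Lemma~\ref{lem:xi in L'} to each basis vector of $\Lambda$. By Lemma~\ref{l:3.2} and the normalisations made just before Lemma~\ref{lem:rat coeff}, there is a finite-index sublattice $\Lambda_1\subset\Lambda$. For every $x\in\Lambda_1$ there is some $y\in\Lambda'$ with $(x,y)\in W$, where $W=\{P(X)=Q(Y)\}$. By Lemmas~\ref{lem:rat coeff} and~\ref{lem:c_n=c^n}, this equation has the form required in Lemma~\ref{lem:xi in L'}.

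\textbf{Step 1: $L\subseteq L'$.} Write $\Lambda=\bigoplus_i\xi_iA$ and $\Lambda_1=\bigoplus_i a_i\xi_iA$ as in the well-known lemma. For each $i$, set $\xi:=a_i\xi_i$. Then $A\xi\subset\Lambda_1$, so every $x\in A\xi$ admits $y\in\Lambda'$ with $(x,y)\in W$. Lemma~\ref{lem:xi in L'} gives $a_i\xi_i\in L'$, hence $\xi_i\in L'$, because $L'$ is an $\F_q(T)$-vector space and $a_i\neq0$. It follows that $L=\Lambda\otimes_A\F_q(T)\subseteq L'$.

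\textbf{Step 2: the reverse inclusion.} I will argue by the symmetry of the situation in $X$ and $Y$. Lemma~\ref{l:3.2}, applied with the roles of the coordinates exchanged, gives a finite-index sublattice $\Lambda_1'\subset\Lambda'$. For every $y$ in it there is some $x\in\Lambda$ with $(x,y)\in W$; this is property (2) in the list preceding Lemma~\ref{lem:rat coeff}. The equation of $W$ can be renormalised by dividing by the leading coefficient of $Q$ instead of $P$. Lemma~\ref{lem:c_n=c^n} applies verbatim with $x$ and $y$ exchanged, since \eqref{e:size} is symmetric when $\deg P=\deg Q$. Lemma~\ref{lem:xi in L'} then applies to basis vectors of $\Lambda'$ and gives $L'\subseteq L$. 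The step to check carefully is exactly this: the hypotheses of Lemmas~\ref{lem:rat coeff}--\ref{lem:xi in L'} used only the rank equality $r=r'$ and the existence of such lattice partners, both of which are symmetric.

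A shortcut would be a dimension count. Since $\Lambda$ and $\Lambda'$ both have $A$-rank $r$, and a rank-$r$ $A$-lattice consists of $F_\infty$-linearly independent elements, both $L$ and $L'$ are $r$-dimensional over $\F_q(T)$. Then $L\subseteq L'$ forces $L=L'$, and I expect this route is cleaner than Step~2.

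\textbf{The second assertion.} Let $a,b\in A$. The lattice $a\Lambda+b\Lambda'$ is a finitely generated $A$-submodule of $L=L'$, which has dimension $r$ over $\F_q(T)$. Hence it has rank at most $r$. If $a\neq0$, it contains $a\Lambda$, which has rank $r$, so its rank is exactly $r$ (similarly if $b\neq0$). Discreteness, needed to call it a lattice, follows because a finitely generated $A$-module inside an $r$-dimensional $F$-space spanned by $\Lambda$ is commensurable with $\Lambda$: after clearing denominators it lies in $c^{-1}\Lambda$ for some nonzero $c\in A$.

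The main obstacle is the symmetric application in Step~2, or equivalently confirming that $\dim_F L'=r$. This reduces to the $F_\infty$-linear independence of a lattice basis, which holds because of the definition of an $A$-lattice given earlier.
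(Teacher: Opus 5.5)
Your proof is correct and follows the paper's argument: Lemma~\ref{lem:xi in L'} applied to the finite-index sublattice $\Lambda_1$ gives $L\subseteq L'$, and the reverse inclusion comes from symmetry. Your dimension-count alternative is also valid and slightly cleaner. An $A$-basis of a lattice is $\F_q(T)$-linearly independent, so $\dim_{\F_q(T)}L=\dim_{\F_q(T)}L'=r$; this avoids checking that the asymmetric normalisations made before Lemma~\ref{lem:rat coeff} can be redone with the coordinates swapped. Your treatment of the second assertion, which the paper leaves implicit, is fine, including the needed proviso that $a$ or $b$ is nonzero.
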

\begin{proof}
     Indeed, Lemma~\ref{lem:xi in L'} implies that $\Lambda_1\subset L'$, so $L=\Lambda_1\otimes_A\F_q(T)\subset L'$, and the claim follows immediately by symmetry.
\end{proof}

\begin{proof}[Proof of Proposition~\ref{prop:deg=1}]
Write $P(X)=X^n+P_1(X)$ and $Q(Y)=c^nY^n+Q_1(Y)$ with
$\deg_X P_1,\deg_Y Q_1\le n-1$, and write $c=a/b$ with $a,b\neq 0$ as $\deg P=\deg Q=n$.

Set $Z:=bX-aY$. Multiplying $P(X)=Q(Y)$ by $b^n$ (and recalling $n=p^m$) gives
\[
0=b^nP(X)-b^nQ(Y)=(bX)^n-(aY)^n+ b^nP_1(X)-b^nQ_1(Y)=Z^n + b^n\bigl(P_1(X)-Q_1(Y)\bigr).
\]
Note that
\begin{equation*}
\begin{split}
     b^n\bigl(P_1(X)-Q_1(Y)\bigr)&=\tilde{P}_1(bX)-b^n Q_1(Y)=\tilde{P}_1(Z+aY)-b^n Q_1(Y)=\tilde{P}_1(Z)+\tilde{P}_1(aY)-b^n Q_1(Y)\nonumber \\
     &=\tilde{P}_1(Z)-\tilde{Q}_1(Y).
\end{split}
\end{equation*}
Hence for any solution $(x,y)\in \Lambda\times\Lambda'$, with $z:=bx-ay$, we have
\begin{equation}\label{eq:z and y}
    z^n-\tilde{P}_1(z)=\tilde{Q}_1(y),\quad \deg \tilde{P}_1, \deg  \tilde{Q}_1< n.
\end{equation}
In particular, for every $y\in\Lambda_1'$ there exists a $z\in b\Lambda+a\Lambda'$ satisfying \eqref{eq:z and y} such that
\begin{equation}\label{e:counting}
\log_q|z|_\infty \le \frac{n-1}{n}\,\log_q|y|_\infty +O(1)=:\alpha\,\log_q|y|_\infty+O(1)
\end{equation}
(note that since $P,Q$ are $p$-polynomials, necessarily $\alpha\le \frac 1 p$).

We claim that this is impossible if $\tilde{Q}_1\neq\operatorname{const}$.
Indeed, assume otherwise. Then for each fixed $z$ there are at most $n-1$ possibilities for $y\in\C_\infty$. Therefore
\[
N_Y(d):=\#\{\,y\in\Lambda' : \log_q|y|_\infty\le d\ \text{and}\ \exists z\in b\Lambda+a\Lambda'\text{ with } z^n-\tilde{P}_1(z)=\tilde{Q}_1(y)\,\}
\]
satisfies
\[
N_Y(d)\ \le\ (n-1)\cdot \#\{\,z\in b\Lambda+a\Lambda' : \log_q|z|_\infty\le \alpha d+O(1)\,\}
= q^{\alpha r d}O(1),
\]
since $b\Lambda+a\Lambda'$ is  of rank $r$ by Corollary~\ref{cor:equal lattices}.

On the other hand, by construction of $\Lambda_1'$ every $y\in\Lambda_1'$ occurs in a solution, so
\[
N_Y(d)\ \ge\ \#\{\,y\in\Lambda_1' : \log_q|y|_\infty\le d\,\}=q^{r d}O(1).
\]
As $\alpha<1$, this is a contradiction for $d\to\infty$. Hence $\tilde{Q}_1(Y)=\operatorname{const}=q_0$, so \eqref{eq:z and y} becomes $z^n-\tilde{P}_1(z)=q_0$. Thus, by irreducibility, $n=1$ and  $W$ is a line.
\end{proof}

\begin{proof}[Proof of Theorem~\ref{alw}]
Write $W=(x_0,y_0)+W_0$ with $W_0$ a one dimensional subspace. Clearly,
$\exp_\Phi(W_0)$ is a sub-$\Phi$-module of $\cald\times\cald'$ and
\[
\exp_\Phi(W)=\exp_\Phi(x_0,y_0)+\exp_\Phi(W_0)
\]
is its translate.
\end{proof}

On the other hand, for different ranks $r\neq r'$ we have the following counterexample.

 \begin{eg}\label{eg:different-ranks}
  Let $\Lambda=\F_q[T^{1/q}]$ and $\Lambda'=A$ be of rank $q$ and $1$, respectively. The map  $X\mapsto X^q$ is a bijection between $\Lambda$ and $\Lambda'$. Thus, the algebraic curve $W=\{(x,y) : y=x^q\}$ is an $\F_q$-subspace intersecting $\Lambda\times\Lambda'$ by a set projecting onto both $\Lambda$ and $\Lambda'$.

Moreover, let $y=x^q$, $x\in\C_\infty$. Then
  \begin{equation*}
   u^q=    \exp_{\Lambda}(x)^q=\left(x\prod_{0\neq\lambda\in\Lambda}\left(1-\frac{x}{\lambda}\right)\right)^q=x^q\prod_{0\neq\lambda\in\Lambda}\left(1-\frac{x^q}{\lambda^q}\right)=y\prod_{0\neq\lambda'\in\Lambda'}\left(1-\frac{y}{\lambda'}\right)=\exp_{\Lambda'}(y)=v.
  \end{equation*}
  In other words, $\exp_{\Phi}W$ is contained in an algebraic curve  $V$ defined by $v=u^q$, which  is a connected algebraic subgroup of $\G_{a,\C_\infty}^2$. However, $V$ is not a sub-$\Phi$-module of $\cald\times\cald'$, otherwise it would be the graph of a morphism of Drinfeld $A$-modules of different ranks, while there is no nonzero morphism between Drinfeld $A$-modules of different rank.
\end{eg}

\subsubsection{Hyperbolic Ax--Lindemann}

An irreducible variety  in $\A^2_{\C_\infty}$ is said to be {\em weakly special} if it is isomorphic to a variety of the form
\[
\A^1_{\C_\infty}\times \{x\},\qquad Y'_0(N),\qquad\{y\}\times \A^1_{\C_\infty}
\]
for $x,y\in\A^1_{\C_\infty}$ and $N\in A-\{0\}$. It is called {\em special}  if, in addition, in the first two cases
the constant coordinates $x$ and $y$ are CM points in $Y(1)_{\C_\infty}\simeq\A^1_{\C_\infty}$.

\begin{thm}\label{p:hyperbolicAL}
    Let $C$ be a $\C_\infty$-algebraic subvariety of $\A^2_{\C_\infty}$ and let $Z$ be a maximal irreducible $\C_\infty$-algebraic subvariety contained in $\boldsymbol{j}^{-1}(C)$. Then $\boldsymbol{j}(Z)$ is a weakly special variety.
\end{thm}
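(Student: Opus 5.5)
We follow the introduction's outline: reduce to a curve $Z\subset\Omega^2$ that is neither horizontal nor vertical, extract from it a Zariski closed subgroup of $\pgl_2(\C_\infty)^2$ that is the graph of an abstract automorphism, rule out a Frobenius twist with Proposition~\ref{p:nofrob}, and conclude that $Z$ is a graph of a Möbius map from $\gl_2(F)$.

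\emph{Reductions.} Since $\boldsymbol{j}^{-1}(C)\subseteq\Omega^2$, we have $\dim Z\in\{0,1,2\}$. The cases $\dim Z=0$ and $\dim Z=2$ are trivial; in the latter case $C=\A^2_{\C_\infty}$. So assume $Z$ is an irreducible algebraic curve, meaning the intersection with $\Omega^2$ of an irreducible curve $\overline Z\subset\p^1\times\p^1$. If one projection of $Z$ is constant, say $z_2=z^0$, then $\boldsymbol{j}(Z)$ lies in $\A^1\times\{j(z^0)\}$, and by maximality of $Z$ it equals that line, which is weakly special. Hence we may assume both projections $Z\to\Omega$ are dominant, with finite fibres. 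Then $C$ contains the image $\boldsymbol{j}(Z)$, which is Zariski dense in a curve, so that curve is a component $C_0$ of $C$. Since $j\colon\Omega\to\A^1$ is surjective, $C_0$ is neither a horizontal nor a vertical line.

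\emph{The group $\Sigma$.} Set $G:=\gl_2(A)\times\gl_2(A)$ and let $\Sigma_0:=\{(\gamma_1,\gamma_2)\in G:(\gamma_1,\gamma_2)Z=Z\}$. I will show that $\Sigma_0$ is large. For $\gamma\in G$, the translate $\gamma Z$ is again a maximal algebraic curve in $\boldsymbol{j}^{-1}(C_0)$. Fix a generic point $z=(z_1,z_2)\in Z$ and consider $\gamma_1\in\gl_2(A)$. The point $\gamma_1z_1$ lies above $j(z_1)$, and the fibre of $C_0$ over $j(z_1)$ is finite. Lifting the analytic branch of $Z$ at $z$ and using the same analytic-continuation and irreducibility argument as in Lemma~\ref{l:3.2}, we get for each $\gamma_1$ some $\gamma_2$ with $(\gamma_1,\gamma_2)Z$ passing through a point of $Z$. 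Hence $(\gamma_1,\gamma_2)Z=Z$ for $\gamma_1$ in a finite-index subgroup $\Gamma_1\subset\gl_2(A)$, and symmetrically for the second factor.

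Now let $\Sigma\subseteq\pgl_2(\C_\infty)^2$ be the Zariski closure of the image of $\Sigma_0$. It stabilises $\overline Z$, so it is contained in the algebraic stabiliser $\Stab(\overline Z)$. The projections of $\Sigma$ contain the Zariski closure of $\Gamma_1$, which is all of $\pgl_2$ because $\Gamma_1$ has finite index in $\gl_2(A)$. On the other hand, $\Sigma$ cannot be all of $\pgl_2^2$, since $\pgl_2^2$ acts transitively on $\Omega^2$ and would not preserve a curve. I would argue further that the kernel $\Sigma\cap(\pgl_2\times 1)$ is a normal algebraic subgroup of the simple group $\pgl_2$; it must be trivial, since otherwise $\overline Z$ would be invariant under $\pgl_2\times 1$ and hence vertical. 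So $\Sigma$ is the graph of an abstract automorphism $\theta$ of $\pgl_2(\C_\infty)$ whose graph is Zariski closed.

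\emph{Classifying $\theta$.} Every abstract automorphism of $\pgl_2(\C_\infty)$ has the form $\mathrm{Inn}(\delta)\circ\sigma$, where $\sigma$ is induced by a field automorphism (Borel--Tits, or classical results for $\mathrm{PGL}_2$). For the graph to be algebraic, $\sigma$ must be given by polynomials on the coordinate ring, so $\sigma=\uptau^m$ with $m\in\Z$; by symmetry of the roles of the two factors we may take $m\ge0$. Proposition~\ref{p:nofrob} rules out $m>0$. I expect this to be the main obstacle. It is proved by the counting argument from the introduction: $\theta(\Gamma_1)$ has finite index in $\gl_2(A)$, so if $m>0$, conjugating by $\delta$ would put a finite-index subgroup of $\pgl_2(\F_q[T])$ inside a conjugate of $\pgl_2(\F_q[T^{q^m}])$. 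Restricting to unipotent elements, this would force a finite-index additive subgroup of $A$ into the much sparser ring $\F_q[T^{q^m}]$, and comparing counts of elements of degree at most $d$ gives a contradiction.

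\emph{Conclusion.} With $m=0$ we have $\theta=\mathrm{Inn}(\delta)$. Then $\overline Z$ is invariant under $\{(g,\delta g\delta^{-1})\}$, and since the graph of $\delta$ is the unique irreducible curve with this invariance, $\overline Z$ is that graph, i.e. $Z=\{(z,\delta z)\}$. Because $\delta\Gamma_1\delta^{-1}$ is commensurable with $\gl_2(A)$, the element $\delta$ lies in $\gl_2(F)$ up to scalars. Therefore $\boldsymbol{j}(Z)=\{(j(z),j(\delta z))\}$ is the modular curve $Y_0'(N)$ of \eqref{e:modular-corr} with $N=\deg\delta$, which is weakly special. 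Finally, maximality of $Z$ together with irreducibility of $C_0$ gives $C_0=Y_0'(N)$.
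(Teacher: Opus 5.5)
Your proposal is correct and follows essentially the same route as the paper: a finite-index stabiliser obtained from the branch argument (Lemma~\ref{l:jbranch}), an algebraic graph of an automorphism $\mathrm{Inn}(\delta)\circ\uptau^m$ of $\pgl_2(\C_\infty)$, the unipotent counting of Proposition~\ref{p:nofrob} to force $m=0$, and invariance under the twisted diagonal to get $Z=\{(z,\delta z)\}$ with $\delta\in\pgl_2(F)$, hence $\boldsymbol{j}(Z)=Y_0'(N)$. The differences are cosmetic. You take the Zariski closure of the set-theoretic stabiliser and use simplicity of $\pgl_2$, where the paper uses the slash-operator stabiliser $S$ and Goursat's lemma. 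You also explicitly dispose of negative Frobenius exponents by swapping the two factors, a case the paper passes over.
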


The rest of the section is devoted to the proof of this Theorem, modelled by the proof of Theorem~\ref{alw}
\\

Let $C$ be defined as the zero locus of an irreducible polynomial $G(X,Y)\in\C_\infty[X,Y]$. Let also $H(x,y)$ be the polynomial whose zero locus defines  the curve $Z\subset \boldsymbol{j}^{-1}(C)$.

If one of the coordinate projections $Z\rightarrow \A^1_{\C_\infty}$ is constant, say $x\equiv x_0$ on $Z$, then
$G(j(x_0),j(y))=0$ for all $(x_0,y)\in Z$, so $G\big(j(x_0),Y\big)$ is identically zero as a polynomial in $Y$.
Thus $C$ is the vertical line $\{j(x_0)\}\times \A^1_{\C_\infty}$ and therefore $\boldsymbol{j}(Z)=\{j(x_0)\}\times \A^1_{\C_\infty}$ is weakly special.
The horizontal case $\{y\}\times \A^1_{\C_\infty}$ is analogous. Henceforth we shall assume that both projections are dominant.
For $x\in\Omega$ and $u\in \A^1_{\C_\infty}$ we set
$$Z_x:=\{y\in\Omega: H(x,y)=0\}\;\;\;\text{and}\;\;\;C_u:=\{v\in \A^1_{\C_\infty}: G(u,v)=0\}.$$

\begin{lem}\label{l:jbranch}
There exists a  subgroup $\Gamma_1\le \pgl_2(A)$ with finite index $[\pgl_2(A):\Gamma_1]=k_1$  and a morphism $\mu: \Gamma_1\to \pgl_2(A)$ such that for every $\gamma\in\Gamma_1$ and every $x\in\Omega$ one has
\[
Z_{\gamma x}=\mu(\gamma)\,Z_x\;.
\]
\end{lem}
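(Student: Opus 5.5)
I will mimic the proof of Lemma~\ref{l:3.2}, replacing the lattice $\Lambda$ acting by translations with $\pgl_2(A)$ acting by Möbius transformations, and $\Lambda'$ acting on the fibre with $\pgl_2(A)$ acting on the second coordinate.

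\emph{Setup.} Fix a generic $x\in\Omega$ and put $u=j(x)$. Choose $x$ so that $Z_x$ is finite and $Z$ is given near each point of $\{x\}\times Z_x$ by smooth local branches $y_{x,i}(x')$ defined for $x'$ near $x$. Since $j$ is $\gl_2(A)$-invariant, $j(\gamma x)=u$ for every $\gamma\in\pgl_2(A)$. Hence, for every $\gamma$, the set
\[
e(\gamma):=j\big(Z_{\gamma x}\big)
\]
is contained in the finite set $C_u$, which has at most $\deg_Y G$ elements. (I will discard the degenerate case in which $G(u,Y)\equiv 0$; this happens only for finitely many $u$.)

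\emph{Dichotomy.} Suppose $e(\gamma_1)\cap e(\gamma_2)\neq\emptyset$. Then there are branches $y_{\gamma_1x,i}$ and $y_{\gamma_2 x,j}$ and some $\delta\in\gl_2(A)$ with $y_{\gamma_1 x,i}=\delta\, y_{\gamma_2 x,j}$ at the base point. The map $j$ is a local rigid-analytic isomorphism away from elliptic points; I choose $x$ generic so that the relevant points avoid them. Consider the two rigid-analytic curves
\[
\{(x',y): H(\gamma_1x',y)=0\}\quad\text{and}\quad \{(x',y): H(\gamma_2x',\delta^{-1}y)=0\}.
\]
The first step is to show that they share a local branch near $(x,y_{\gamma_1x,i})$. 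Both branches lie over the same $j$-values: the first because $Z\subset\boldsymbol j^{-1}(C)$, and the points of $C$ over $j(\gamma_k x')=j(x')$ move continuously.

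\emph{Main obstacle.} Here is the key point I expect to be delicate. After an algebraic reparametrisation, $H(\gamma x,y)$ is again a rational function of $(x,y)$. Clearing the denominators $(cx+d)^{\deg}$ gives a polynomial $H^{\gamma,\delta}(X,Y)$ of the same bidegree. Irreducibility of $H$ then shows that $H^{\gamma_1,1}$ and $H^{\gamma_2,\delta}$ agree up to a scalar, i.e.
\[
Z_{\gamma_2^{-1}\gamma_1 x'}=\delta\,Z_{x'}\quad\text{identically in }x'.
\]
To get this, I will argue exactly as in Lemma~\ref{l:3.2}: the two curves $\gamma$-transform $Z$ into irreducible algebraic curves with a common analytic germ, so they coincide. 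The difficulty is ensuring that the two candidate local branches match under $\delta$ and not just pointwise. I plan to handle this by local injectivity of $j$ near non-elliptic points combined with continuity of branches.

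\emph{Conclusion.} Define $\gamma\sim\gamma'$ if $Z_{\gamma^{-1}\gamma' x'}=\delta Z_{x'}$ for some $\delta$. This partitions $\pgl_2(A)$ into at most $k_1\le\deg_Y G$ classes, since the sets $e(\gamma)$ are pairwise equal or disjoint. Left multiplication permutes these classes, so the stabiliser $\Gamma_1$ of the class of the identity has index $k_1$. For $\gamma\in\Gamma_1$ I set $\mu(\gamma):=\delta$.

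To check that $\mu$ is well defined I will use the following. Two choices $\delta,\delta'$ give $\delta^{-1}\delta' Z_{x'}=Z_{x'}$ for all $x'$. Since $Z$ is neither horizontal nor vertical, $x'\mapsto Z_{x'}$ is non-constant, and only the finite stabiliser can fix all fibres. This stabiliser is trivial in $\pgl_2$ after shrinking $\Gamma_1$ to a finite-index torsion-free subgroup. The homomorphism property then follows by composing the identities: $Z_{\gamma\gamma'x}=\mu(\gamma)Z_{\gamma' x}=\mu(\gamma)\mu(\gamma')Z_x$.
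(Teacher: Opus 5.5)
Your core argument is the paper's. You use $e(\gamma)=j(Z_{\gamma x})\subset C_u$, the dichotomy that these sets are equal or disjoint, a shared analytic branch plus irreducibility of $H$ after clearing the factors $(cX+d)^{d_X}$ (the paper's slash operators $H|_X\gamma$, $H|_Y\delta$), and $\Gamma_1$ as the stabiliser of a class, of index at most $\deg_Y G$. There is one small slip: $Z_{\gamma_1x'}=\delta Z_{\gamma_2x'}$ gives $Z_{\gamma_1\gamma_2^{-1}x'}=\delta Z_{x'}$, not $Z_{\gamma_2^{-1}\gamma_1x'}$. So $e$ is constant on right cosets $\Gamma_1\gamma$; the index bound is unaffected.

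The step that fails is your last paragraph, where you make $\mu$ well defined and multiplicative.

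\begin{itemize}
\item First, $\pgl_2(\F_q[T])$ has no torsion-free subgroup of finite index. Any finite-index subgroup meets $\bigl\{\begin{bmatrix}1&f\\0&1\end{bmatrix}:f\in A\bigr\}\cong(A,+)$ in an infinite group whose non-trivial elements all have order $p$. So the characteristic-zero reflex (Selberg's lemma) is unavailable.
\item Second, even if such a subgroup existed, shrinking the \emph{domain} $\Gamma_1$ would not remove the ambiguity. Two admissible choices of $\delta$ differ by an element of $\mathrm{St}:=\{\eta\in\pgl_2(A):\eta Z_{x'}=Z_{x'}\ \text{for all }x'\}$, which is a subgroup of the \emph{target} and does not depend on $\Gamma_1$. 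Your composition argument therefore only gives $\mu(\gamma\gamma')\in\mu(\gamma)\mu(\gamma')\,\mathrm{St}$.
\end{itemize}

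The repair is to shrink the target instead. The set $\Sigma:=\{(\gamma,\delta)\in\pgl_2(A)^2: Z_{\gamma x'}=\delta Z_{x'}\ \text{for all } x'\}$ is a subgroup with $\pr_1(\Sigma)=\Gamma_1$ and $\Sigma\cap(\{1\}\times\pgl_2(A))=\{1\}\times\mathrm{St}$. Moreover $\mathrm{St}$ is finite, because $\pgl_2(A)$ acts on $\Omega$ with finite stabilisers and the fibres $Z_{x'}$ are finite and nonempty. Take $N$ of large degree so that the kernel $\Gamma(N)$ of $\pgl_2(A)\to\pgl_2(A/N)$ meets $\mathrm{St}$ trivially, and put $\Sigma':=\Sigma\cap(\pgl_2(A)\times\Gamma(N))$. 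Then $\pr_1$ is injective on $\Sigma'$, its image has finite index in $\Gamma_1$, and $\mu:=\pr_2\circ(\pr_1|_{\Sigma'})^{-1}$ is a homomorphism there.

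For comparison, the paper's proof just chooses some $\nu(\gamma)$ for each $\gamma$ and never checks multiplicativity. The later argument uses only $(\gamma,\mu(\gamma))\in S$ for $\gamma\in\Gamma_1$, so you could also simply drop the homomorphism requirement.
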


\begin{proof}
Choose some $x\in\Omega$ such that $j(x)$ is not a critical value of $j$, and set $u:=j(x)\in \A^1_{\C_\infty}$,
so that $Z_x$ is locally given by smooth branches $y_{x,i}$ of $Z$ above $x$.

Let $d_X:=\deg_X(H)$ and $d_Y:=\deg_Y(H)$. For $\gamma=\begin{bmatrix}
    a & b
    \\
    c & d
\end{bmatrix}\in\pgl_2(A)$, we define a ``slash operator'' as follows
\[
(H|_X\gamma):=(cX+d)^{d_X}H(\gamma X,Y),\;\;(H|_Y\gamma):=(cY+d)^{d_Y}H(X,\gamma Y)\,\in\C_\infty[X,Y].
\]

Note that for $X\in\Omega$ one has $cX+d\neq 0$ (since $d/c\in F_\infty$ when $c\neq 0$),
and similarly  for $cY+d\neq 0$; hence these factors do not change
zero sets on $\Omega$. Moreover, an irreducible polynomial remains irreducible under this action.

For any $\gamma\in\pgl_2(A)$ define the finite subset
\[
e(\gamma):=j\big(Z_{\gamma x}\big)\ \subset\ \A^1_{\C_\infty}.
\]
Since $j(\gamma x)=j(x)=u$, one has $e(\gamma)\subset C_u$ for all $\gamma$. In particular, the set
$C_u$ is finite of cardinality $\le \deg_Y(G)$, hence there are only finitely many possibilities for $e(\gamma)$.

Assume that $e(\gamma_1)\cap e(\gamma_2)\neq\emptyset$.
This means there exist $y_1\in Z_{\gamma_1 x}$ and
$y_2\in Z_{\gamma_2 x}$ such that $j(y_1)=j(y_2)$. By definition of the quotient map $j$, this implies
$y_1=\delta\,y_2$ for some $\delta\in\pgl_2(A)$. Working on sufficiently small neighborhoods where $j$
is a local homeomorphism and the branches are single-valued, we obtain an equality of  branches
near $x$ of the form
\[
y_{\gamma_1 x,i}=\delta\,y_{\gamma_2 x,j}.
\]
Equivalently, the two polynomials $(H|_X\gamma_1)$ and $((H|_X\gamma_2)|_Y\delta^{-1})$ share a common
analytic branch. Since $H$ is irreducible, it follows that they coincide up to a constant $c\in\C_\infty^\times$.

We say that two polynomials $F_1(X,Y)$ and $F_2(X,Y)$ are $y$-equivalent if and only if
\[
F_1(X,Y)=c \cdot (F_2|_Y\eta)
\]
for some $\eta\in\pgl_2(A)$ and $c\in\C_\infty^\times$.

With this terminology, the previous paragraph shows that whenever
$e(\gamma_1)\cap e(\gamma_2)\neq\emptyset$, then $(H|_X\gamma_1)$ and $(H|_X\gamma_2)$ are $y$-equivalent.
In particular $e(\gamma_1)=e(\gamma_2)$, since $j(\eta y)=j(y)$ for $\eta\in\pgl_2(A)$.
Thus the subsets $e(\gamma)\subset C_u$ are pairwise disjoint unless equal.
\\
Therefore the family $\{(H|_X\gamma):\gamma\in\pgl_2(A)\}$ meets only finitely many $y$-equivalence
classes, in fact at most $\deg_Y(G)$.

The group $\pgl_2(A)$ acts on this finite set by left
translation in the $X$ variable. Denoting by $\Gamma_1:=\Stab([H])$ the stabilizer of the $y$-equivalence class of $H$, we have $[\pgl_2(A):\Gamma_1]\le \deg_Y(G)$.
Finally, for each $\gamma\in\Gamma_1$ choose $\nu(\gamma)\in\pgl_2(A)$ and $c(\gamma)\in\C_\infty^\times$
such that
\[
(H|_X\gamma)=c(\gamma)\cdot (H|_Y\nu(\gamma)).
\]
Evaluating at an arbitrary $z\in\Omega$ and taking zero sets in the variable $Y$, the scalar $c(\gamma)$
and the nonvanishing factors $(c x+d)^{d_X}$ and $(cY+d)^{d_Y}$ are irrelevant;
we get
\[
Z_{\gamma x}=\nu(\gamma)^{-1}Z_x.
\]
By setting $\mu(\gamma):=\nu(\gamma)^{-1}$ we conclude.
\end{proof}
Let $V$ be the finite dimensional
$\C_\infty$-vector space of bi-variate polynomials of bi-degree $\le(d_X,d_Y)$. Since we only care about zero sets, we work in the projective space
$\p(V):=\text{Proj}(\text{Sym}\big(V^\vee)\big)$, i.e., we identify polynomials up to nonzero scalars, and we denote by $[H]$ the projective class corresponding to a polynomial $H$. Consider the two algebraic subsets
\[
\bigl\{\,[(H|_X \gamma)] : \gamma\in\pgl_2(\C_\infty)\,\bigr\}
,\;
\bigl\{\,[(H|_Y \eta)] : \eta\in\pgl_2(\C_\infty)\,\bigr\}
\subset \p(V)
\]
and  denote by $\Gamma_X$ and $\Gamma_Y$ their Zariski closures.
By Lemma~\ref{l:jbranch}, for every $\gamma\in\Gamma_1$ there exist $\mu(\gamma)\in\pgl_2(A)$ and
$c(\gamma)\in\C_\infty^\times$ such that $(H|_X\gamma)=c(\gamma)\,(H|_Y\mu(\gamma))$. Equivalently, $[(H|_X\gamma)]\in\Gamma_Y$, hence
\[
\{\,[(H|_X\gamma)] : \gamma\in\Gamma_1\,\}\subset \Gamma_Y.
\]

We recall the following standard result.

\begin{lem}\label{l:dense}
Let $\Gamma\le \pgl_2(A)$ be of finite index. Then $\Gamma$ is Zariski dense in $\pgl_{2/\C_\infty}$.
\end{lem}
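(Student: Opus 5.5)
The plan is to reduce to unipotent elements and exploit the fact that $A=\F_q[T]$ is infinite. Let $\Gamma\le\pgl_2(A)$ have finite index and let $G\subseteq\pgl_{2/\C_\infty}$ be the Zariski closure of $\Gamma$. Then $G$ is an algebraic subgroup, and the goal is to show $G=\pgl_2$.

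First I pass to the unipotent subgroups. Let
\[
U^+:=\Bigl\{\begin{bmatrix}1&a\\0&1\end{bmatrix}: a\in A\Bigr\}\simeq (A,+).
\]
The intersection $\Gamma\cap U^+$ has finite index in $U^+$, because $[U^+:U^+\cap\Gamma]\le[\pgl_2(A):\Gamma]$. A finite-index subgroup of $(A,+)$ is infinite, since $A$ is an infinite $\F_q$-vector space; it contains for example $kA$ for a suitable index bound, or more simply any finite-index subgroup of an infinite group is infinite. Hence $\Gamma\cap U^+$ is an infinite subset of the one-dimensional irreducible group $\mathbb G_a\simeq U^+_{\C_\infty}$. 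Its Zariski closure is therefore all of $U^+_{\C_\infty}$, and so $G\supseteq U^+_{\C_\infty}$. The same argument with lower-triangular unipotents gives $G\supseteq U^-_{\C_\infty}$.

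Next I conclude with the standard fact that $U^+$ and $U^-$ generate $\mathrm{SL}_2$ as an algebraic group, via the Bruhat or big-cell decomposition $U^-TU^+$, where the torus $T$ is itself generated by products of elements of $U^\pm$. Their images therefore generate the image of $\mathrm{SL}_2$ in $\pgl_2$. Since $\mathrm{SL}_2\to\pgl_2$ is surjective on $\C_\infty$-points ($\C_\infty$ being algebraically closed), this image is all of $\pgl_2(\C_\infty)$. As $G$ is a closed subgroup containing both $U^\pm_{\C_\infty}$, it follows that $G=\pgl_2$.

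The only delicate point is bookkeeping in $\pgl_2$ rather than $\gl_2$: the upper unipotent matrices inject into $\pgl_2(A)$, so $U^+$ really is a subgroup isomorphic to $(A,+)$. The finite-index claim passes through this injection, because intersecting a finite-index subgroup with any subgroup yields finite index in that subgroup. No arithmetic input is needed, only that $A$ is infinite, so I expect no real obstacle.
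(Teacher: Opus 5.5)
Your proof is correct, but it takes a different route from the paper.

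The paper argues with cosets. It writes $\pgl_2(A)=\bigsqcup_{i=1}^n\gamma_i\Gamma$ and takes Zariski closures, using the Zariski density of $\pgl_2(A)$ itself, which it quotes as known rather than proves. This gives $\pgl_{2,\C_\infty}\subseteq\bigcup_i\gamma_i\overline{\Gamma}$, and irreducibility of $\pgl_2$ then forces $\overline{\Gamma}=\pgl_2$.

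You never use the density of $\pgl_2(A)$. You extract the infinite subgroups $\Gamma\cap U^{\pm}$ of the two root groups. Their closures are the full one-dimensional unipotent subgroups. You then use that $U^{\pm}$ generate $\mathrm{SL}_2(\C_\infty)$, which surjects onto $\pgl_2(\C_\infty)$.

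The paper's argument is soft and general: any finite-index subgroup of a Zariski-dense subgroup of a connected algebraic group is again dense. Its cost is that it outsources the real input, namely the density of $\pgl_2(A)$. Your argument is specific to $\pgl_2$, since it relies on root subgroups and elementary generation. In exchange it is self-contained. Applied with $\Gamma=\pgl_2(A)$, it proves the very density statement the paper takes for granted, using only that $A$ is infinite.

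One minor slip: your remark that $\Gamma\cap U^+$ contains some $kA$ is vacuous in characteristic $p$, where $kA$ is either $A$ or $0$. Your fallback, that a finite-index subgroup of an infinite group is infinite, is all you need.
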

\begin{proof}
As the index $[\pgl_2(A):\Gamma]=n$ is finite, we choose representatives $\gamma_1,\dots,\gamma_n\in\pgl_2(A)$ such that $\pgl_2(A)=\bigsqcup_i^n\gamma_i \Gamma$. Taking the Zariski closure, as $\pgl_2(A)$ is Zariski dense in $\pgl_{2,\C_\infty}$, we have $$\pgl_{2,\C_\infty}=\overline{\pgl_2(A)}\subseteq \bigcup_{i=1}^n\gamma_i\overline{\Gamma}.$$
Since $\pgl_{2,\C_\infty}$ is irreducible, we conclude.

\end{proof}

Lemma~\ref{l:dense} implies that the subset $\{[(H|_X\gamma)] : \gamma\in\Gamma_1\}$ is Zariski dense in
$\Gamma_X$, i.e., it is dense in the image of the orbit map. It follows that $\Gamma_X\subseteq \Gamma_Y$.
To get the reverse inclusion, one applies the same argument with the two coordinates interchanged. Hence we conclude that $\Gamma_X=\Gamma_Y$.

For a projective class $[H]\in\p(V)$, define
\[
S:=\Bigl\{(\gamma,\eta)\in\pgl_2(\C_\infty)^2:\ \exists\,c(\gamma,\eta)\in\C_\infty^\times\ \text{such that}\
\bigl((H|_X \gamma)|_Y \eta\bigr)=c(\gamma,\eta)\,H\Bigr\}.
\]

\begin{lem}\label{l:SZ=Z}
We have that $S$ is  Zariski closed
and $S.Z=Z$.
\end{lem}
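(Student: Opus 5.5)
The plan is to treat both assertions at the level of bihomogeneous polynomials on $\p^1\times\p^1$. There the slash operators become honest linear substitutions, and the nonvanishing factors $(cX+d)^{d_X}$ and $(cY+d)^{d_Y}$ are absorbed into the homogenisation. Throughout I read $S.Z$ for the action $(\gamma,\eta)\cdot(x,y)=(\gamma^{-1}x,\eta^{-1}y)$, and I read $Z$ as the algebraic curve $\{H=0\}$ (equivalently its Zariski closure in $\p^1\times\p^1$), not merely its trace on $\Omega^2$. This reading is what the subsequent argument needs.

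\emph{Step 1: $S$ is Zariski closed.} Write $\widehat H(X_0,X_1;Y_0,Y_1)$ for the bihomogenisation of $H$, of bidegree $(d_X,d_Y)$. For $\gamma,\eta\in\gl_2(\C_\infty)$, the polynomial $(H|_X\gamma)|_Y\eta$ is the dehomogenisation of $\widehat H(\gamma\cdot(X_0,X_1);\eta\cdot(Y_0,Y_1))$. Its coefficients are therefore bihomogeneous polynomials in the entries of $\gamma$ and $\eta$, of degrees $d_X$ and $d_Y$. Rescaling $\gamma$ or $\eta$ multiplies the whole polynomial by a scalar, so the condition descends to $\pgl_2\times\pgl_2$.

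This polynomial is never zero, since the substitution is invertible. Hence the existence of $c(\gamma,\eta)\in\C_\infty^\times$ is equivalent to the coefficient vector of $(H|_X\gamma)|_Y\eta$ being proportional to that of $H$ in $V$. That is the vanishing of all $2\times2$ minors of the matrix whose two rows are these coefficient vectors. These minors are bihomogeneous in $(\gamma,\eta)$, so they cut out a closed subset of $\pgl_2(\C_\infty)^2$, and this subset is exactly $S$.

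\emph{Step 2: $S.Z\subseteq Z$.} Take $(\gamma,\eta)\in S$, so that $\widehat H(\gamma u;\eta v)=c\,\widehat H(u;v)$ identically. If $\widehat H(u;v)=0$, then $\widehat H(\gamma u;\eta v)=0$, so $\gamma^{-1}$ and $\eta^{-1}$ carry points of $\{\widehat H=0\}$ to points of $\{\widehat H=0\}$. Equivalently, $Z$ is stable under $(x,y)\mapsto(\gamma x,\eta y)$ and its inverse.

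\emph{Step 3: equality and group structure.} $S$ is closed under products, because the slash operators compose: $(H|_X\gamma\gamma')=((H|_X\gamma)|_X\gamma')$ up to a nonzero scalar, and the $X$- and $Y$-slashes commute. $S$ is also closed under inverses, obtained by applying the slash by $(\gamma^{-1},\eta^{-1})$ to the defining identity. So $S$ is a subgroup, and since it contains the identity, $Z\subseteq S.Z$; together with Step 2 this gives $S.Z=Z$.

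The main obstacle is the bookkeeping forced by the fact that a general $\gamma\in\pgl_2(\C_\infty)$ does not preserve $\Omega$. If $Z$ were understood only as a subset of $\Omega^2$, then $S.Z=Z$ would fail as stated. I would therefore say explicitly that the statement concerns the algebraic curve $\{H=0\}$. This is legitimate because $Z$ is irreducible, so the algebraic curve is the Zariski closure of its trace on $\Omega^2$, and the later use of the lemma is only to the Zariski closed group $S$ acting on this curve. On $\Omega^2$ itself, Step 2 yields invariance only for the elements of $S$ that preserve $\Omega$, for instance those coming from $\Gamma_1$ via Lemma~\ref{l:jbranch}. There, $cX+d\neq0$ guarantees that the slash factors do not alter zero sets.
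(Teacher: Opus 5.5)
Your proposal is correct and follows the paper's argument. In both, $S$ is the preimage of the point $[H]$ under $(g,h)\mapsto[(H|_Xg)|_Yh]$ (your vanishing $2\times2$ minors are exactly this condition written out), and invariance comes from the identity $(H|_Xg)|_Yh=c\,H$ with $c\neq0$. Working with the bihomogenisation on $\p^1\times\p^1$ is more careful than the paper: the paper argues on $\Omega^2$, and so tacitly assumes that $g,h$ preserve $\Omega$ and that $cX+d$ does not vanish there, which fails for general elements of $\pgl_2(\C_\infty)$; your Step 3 also supplies the subgroup property of $S$ that the later appeal to Goursat's lemma uses without proof.
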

\begin{proof}
Consider the  morphism $\alpha\colon\pgl_{2,\C_\infty}^2 \rightarrow \p(V)$ sending $(g,h)\mapsto [\,H|_X g|_Y h\,]$.
Since the existence of $\lambda\in\C_\infty^\times$ such that $G=\lambda H$  is equivalent to $[G]=[H]$ in $\p(V)$, we have $S=\alpha^{-1}([H])$ is Zariski closed.

On the other hand, if $\bigl((H|_X g)|_Y h\bigr)=c\,H$ with $c\in\C_\infty^\times$, then on $\Omega^2$ the clearing denominators
factors of the slash operator do not vanish. Hence $H(gx,hy)=0$ holds if and only if $\bigl((H|_X g)|_Y h\bigr)(x,y)=0$,
and this is equivalent to $H(x,y)=0$ as $c$ is invertible. Thus $(\gamma,\eta).Z=Z$.
 \end{proof}

By Lemma \ref{l:jbranch}, we see

that  $(\gamma,\mu(\gamma)^{-1})\in S$
for every $\gamma\in\Gamma_1$. This means that the projection onto the first factor $\pr_1\colon S\to \pgl_{2}(\C_\infty)$  contains $\Gamma_1$. This shows that $\dim S=3$.
As both projections restricted to $S$ are dominant, by Goursat's lemma we have that
\begin{equation}
    S=\big\{\big(\gamma,\psi(\gamma)\big):\gamma\in \pgl_2(\C_\infty)\big\}
\end{equation}
for some $\psi\in\Aut(\pgl_{2/\C_\infty})$.

\begin{lem}\label{l:aut}
  The automorphism group of $\pgl_{2,\C_\infty}$ is naturally isomorphic to $\pgl_2(\C_\infty) \rtimes \Aut(\C_\infty)$.
\end{lem}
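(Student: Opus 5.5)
The lemma describes the automorphisms of the abstract group $\pgl_2(\C_\infty)$, which is how $\psi$ arises from Goursat's lemma. I would cite the Borel--Tits theorem on abstract homomorphisms of isotropic simple algebraic groups, but also outline the elementary route for $\pgl_2$ over an algebraically closed field. First, there is a natural map $\pgl_2(\C_\infty)\rtimes\Aut(\C_\infty)\to\Aut(\pgl_2(\C_\infty))$: $\delta$ acts by conjugation $\operatorname{Inn}(\delta)$, and $\sigma\in\Aut(\C_\infty)$ acts entrywise on matrices. Entrywise action is well defined on $\pgl_2$ because it preserves scalars, and it composes with conjugation by $\sigma\circ\operatorname{Inn}(\delta)\circ\sigma^{-1}=\operatorname{Inn}(\sigma(\delta))$, which is the semidirect product law.

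Injectivity is direct. If $\operatorname{Inn}(\delta)\circ\sigma$ is the identity, apply it to the diagonal torus and the unipotent matrices $\begin{bmatrix}1&t\\0&1\end{bmatrix}$. This forces $\delta$ to normalize the Borel subgroup and the torus, and to centralize the unipotent radical, so $\delta$ is scalar. Then $\sigma(t)=t$ for all $t$, so $\sigma=\id$.

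Surjectivity is the main point. Let $\psi$ be an abstract automorphism.

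1. I would characterize the unipotent elements group-theoretically. In characteristic $p$, the nontrivial unipotents are exactly the nontrivial elements of order $p$. This uses that $\C_\infty$ is algebraically closed, so every element is conjugate to a diagonal or a Jordan form, and a diagonal element $\mathrm{diag}(\lambda,1)$ with $\lambda^p=1$ is trivial. So $\psi$ permutes unipotent elements.
2. The root subgroups $U_\ell\simeq\G_a$ correspond to the points $\ell\in\p^1(\C_\infty)$. I would characterize them as the maximal abelian subgroups consisting of unipotent elements together with the identity, or as $\{1\}\cup$ (the centralizer of a nontrivial unipotent intersected with the unipotents). Hence $\psi$ permutes the $U_\ell$ and induces a bijection of $\p^1(\C_\infty)$.
3. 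Using $2$-transitivity of $\pgl_2$ on $\p^1$, I would compose $\psi$ with an inner automorphism so that $\psi$ fixes $U_\infty$ (upper triangular) and $U_0$ (lower triangular). Then $\psi$ preserves the normalizer intersection $B\cap B^-=T$, the diagonal torus.
4. On $U_\infty\simeq(\C_\infty,+)$, $\psi$ is an additive bijection $\sigma$. The torus acts on $U_\infty$ through the character $t\mapsto \lambda t$, with $\mathrm{diag}(\lambda,1)\in T$. Equivariance gives $\sigma(\lambda t)=\chi(\lambda)\sigma(t)$ for an automorphism $\chi$ of $\C_\infty^\times$. After a further conjugation by a diagonal element normalizing $\sigma(1)=1$, we get $\chi=\sigma$ and $\sigma(\lambda t)=\sigma(\lambda)\sigma(t)$, so $\sigma$ is a field automorphism.
5. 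Composing with $\sigma^{-1}$, $\psi$ becomes trivial on $U_\infty$ and on $T$. Its restriction to $U_0$ is then forced as well: it is multiplicative under the $T$-action, and the relation $w=u\,u^-\,u$ for the Weyl element ties it to $U_\infty$. Since $U_\infty$, $U_0$ and $T$ generate $\pgl_2(\C_\infty)$, $\psi$ is the identity.

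I expect the main obstacle to be step 5, together with the bookkeeping in step 4 about which normalization is absorbed into the inner part. I would handle it by computing explicitly in $\mathrm{SL}_2$ with the Steinberg-type relation, lifting $\psi$ to $\mathrm{SL}_2(\C_\infty)$ via commutators since $\pgl_2(\C_\infty)$ is perfect. Naturality, meaning compatibility with the algebraic-group structure, follows from the explicit formula. In the application, algebraicity of $S$ then restricts $\sigma$ to continuous (indeed algebraic up to Frobenius) automorphisms, which is used afterwards.
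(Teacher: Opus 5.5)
Your proof is correct, but it takes a different route from the paper. The paper gives no argument of its own. It cites Schreier--van der Waerden \cite{svdw} for $\Aut(\PSL_2(E))\simeq\pgl_2(E)\rtimes\Aut(E)$, and Dieudonn\'e \cite{dieud} for $\Aut(\PSL_2(E))\simeq\Aut(\pgl_2(E))$ when $E\neq\F_2,\F_3$.

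Besides pointing to Borel--Tits, you reprove the classical theorem in the case at hand. Your elementary outline is essentially the Schreier--van der Waerden argument specialised to $\C_\infty$, and it uses the two relevant features of this field:
\begin{itemize}
\item Algebraic closedness gives Jordan forms and $\PSL_2(\C_\infty)=\pgl_2(\C_\infty)$. So Dieudonn\'e's comparison step becomes superfluous, and you can work in $\pgl_2$ throughout.
\item Positive characteristic lets you identify the nontrivial unipotents as exactly the elements of order $p$. In characteristic $0$, your step 1 would need a different characterisation.
\end{itemize}
The citation buys brevity and validity over arbitrary fields. Your version is self-contained and shows exactly where the hypotheses on $\C_\infty$ enter.

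Two small points.

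In the injectivity step, $\delta$ does not literally centralise $U_\infty$: it carries $u(\sigma(t))$ to $u(t)$. Since $\sigma(1)=1$, however, it centralises $u(1)=\begin{bmatrix}1&1\\0&1\end{bmatrix}$. Together with $\delta\in T$, this gives $\delta=1$.

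In step 5, you do not need to lift $\psi$ to $\mathrm{SL}_2(\C_\infty)$. Perfectness gives uniqueness of such a lift, not its existence. Write $u^-(t)=\begin{bmatrix}1&0\\t&1\end{bmatrix}$ and $w=u(1)u^-(-1)u(1)=\begin{bmatrix}0&1\\-1&0\end{bmatrix}$. After your normalisations, $T$-equivariance forces $\psi(u^-(t))=u^-(ct)$ for a constant $c$. Then $\psi(w)=u(1)u^-(-c)u(1)$ is the class of $\begin{bmatrix}1-c&2-c\\-c&1-c\end{bmatrix}$. This element must still normalise $T$ and act on it by inversion, so it must be antidiagonal. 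That gives $c=1$ directly in $\pgl_2(\C_\infty)$.
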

\begin{proof}
    By \cite{svdw} one has $\Aut(\PSL_2(E))\simeq \pgl_2(E)\rtimes \Aut(E)$, and by \cite[Section I, p.99]{dieud} one has $\Aut\bigl(\PSL_2(E)\bigr)\simeq \Aut\bigl(\pgl_2(E)\bigr)$ for any field $E$ different from $\F_2$ and $\F_3$.
\end{proof}
Since $S$ is algebraic, the induced field automorphisms $\psi$ are definable, i.e., they are automorphisms whose graphs are definable using field operations. Therefore, as mentioned {\em en passant} in \cite[p.644]{moosa}, since $\C_\infty$ is algebraically closed, any field automorphism arising from $S$ is a power of the Frobenius. Therefore, by Lemma \ref{l:aut}
we have that
\begin{equation}\label{e:S}
    S=\big\{\big(\gamma,\text{Inn}(\delta)\circ\uptau^m(\gamma)\big):\gamma\in \pgl_2(\C_\infty)\big\}
\end{equation}
for $m$ a non-negative integer, $\delta\in\pgl_2(\C_\infty)$ and $\text{Inn}(\delta)$ the inner automorphism induced by $\delta$.

Let us write
\[
\pgl_2(A)^{(m)}:=\uptau^m\big(\pgl_2(A)\big)=\pgl_2(\F_q[T^{q^m}])
\]
for $m\ge 0$.

\begin{lem}\label{l:commensurator-frobenius}
Let $m\ge 0$, let $\delta\in\pgl_2(\C_\infty)$, and suppose that
\[
K:=\pgl_2(A)\cap \delta\,\pgl_2(A)^{(m)}\,\delta^{-1}
\]
has finite index in $\pgl_2(A)$. Then $\delta\in\pgl_2(F)$.
\end{lem}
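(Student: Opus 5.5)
The plan is to exploit the unipotent elements of $K$ and the fixed points they determine on $\p^1$. Since $K$ has finite index in $\pgl_2(A)$, it contains the $k$-th powers of the unipotent elements $u(a)=\begin{bmatrix}1&a\\0&1\end{bmatrix}$ and $v(a)=\begin{bmatrix}1&0\\a&1\end{bmatrix}$ for $k=[\pgl_2(A):K]$. In characteristic $p$ one has $u(a)^k=u(ka)$, and $ka$ may vanish, so instead we intersect. The group $K\cap U(A)$, with $U$ the upper unipotent subgroup, has finite index in $U(A)\simeq A$. It is therefore an infinite additive subgroup $M\subseteq A$, and likewise for the lower unipotent subgroup and for every $\pgl_2(A)$-conjugate of $U$.

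For each $a\in M$ we have $\delta^{-1}u(a)\delta\in\pgl_2(\F_q[T^{q^m}])$, and this element is a nontrivial unipotent. A nontrivial unipotent element of $\pgl_2(E)$, for $E$ a field, has a unique fixed point on $\p^1$, and that fixed point lies in $\p^1(E)$. The fixed point of $u(a)$ is $\infty$, so the fixed point of $\delta^{-1}u(a)\delta$ is $\delta^{-1}(\infty)$. It follows that $\delta^{-1}(\infty)\in\p^1(\F_q(T^{q^m}))\subseteq\p^1(F)$.

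Apply the same reasoning to the $K$-points of the conjugates $gUg^{-1}$ for $g\in\pgl_2(A)$; each such intersection again has finite index, being $K\cap gU(A)g^{-1}$. This gives $\delta^{-1}(g\infty)\in\p^1(F)$ for every $g\in\pgl_2(A)$, that is, $\delta^{-1}$ maps the three points $\infty,0,1$ (indeed all of $\p^1(F)$) into $\p^1(F)$. A Möbius transformation is determined by the images of three distinct points. The unique element of $\pgl_2$ sending $\infty,0,1$ to three given $F$-rational points is defined over $F$, so $\delta^{-1}\in\pgl_2(F)$, and hence $\delta\in\pgl_2(F)$.

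The main obstacle is making the unipotent fixed-point step rigorous in $\pgl_2$ rather than $\gl_2$. One has to check that $\delta^{-1}u(a)\delta$, as an element of $\pgl_2(\F_q[T^{q^m}])$, lifts to a matrix over $\F_q(T^{q^m})$ that is unipotent up to scalar. Its trace squared over determinant equals $4$, and it has a single eigenline; in characteristic $2$, or in the inseparable case, the eigenvalue might a priori be only a square root. The plan is to handle this by noting that the eigenvector of $\begin{bmatrix}\alpha&\beta\\\gamma&\epsilon\end{bmatrix}$, which is $[\alpha-\epsilon:\gamma]$ or $[\beta:\epsilon-\alpha]$ up to scaling, is given by rational expressions in the entries. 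Hence the fixed point is rational over the field generated by ratios of entries, which lies in $\F_q(T^{q^m})$, regardless of whether the eigenvalue itself is rational. Once this point is secured, the remaining steps are formal.
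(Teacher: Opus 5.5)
Your route is the same as the paper's. You intersect $K$ with the conjugates $gU(A)g^{-1}$, $g\in\pgl_2(A)$, to get a nontrivial unipotent element of $K$ fixing each of $\infty,0,1$, pull its fixed point back by $\delta^{-1}$, and conclude because a M\"obius map is determined by three points. The gap is in the step you flagged as the main obstacle: your resolution of it is wrong.

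\emph{The formulas.} In odd characteristic, the eigenline of a scalar multiple of a unipotent $\begin{bmatrix}\alpha&\beta\\ \gamma&\epsilon\end{bmatrix}$ is $[\alpha-\epsilon:2\gamma]$ or $[2\beta:\epsilon-\alpha]$. You dropped the factor $2$, which is harmless. In characteristic $2$, however, projective unipotence forces $\operatorname{tr}=0$, i.e.\ $\alpha=\epsilon$. Your formulas then degenerate to $[0:\gamma]$ and $[\beta:0]$, whereas the fixed-point equation is really $\gamma z^2=\beta$. Already for $\begin{bmatrix}0&1\\1&0\end{bmatrix}\in\pgl_2(A)$ your formulas return $0$ and $\infty$, while the fixed point is $1$.

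\emph{The principle.} The claim that the fixed point is rational over the field generated by ratios of entries, whatever the eigenvalue, is false. For $q$ even, $\begin{bmatrix}0&T\\1&0\end{bmatrix}$ is $\sqrt T$ times a nontrivial unipotent matrix, so it defines a nontrivial unipotent element of $\pgl_2(F)$. Yet its only fixed point is $\sqrt T\notin\p^1(F)$. So rationality of the fixed point really does depend on the eigenvalue.

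\emph{The missing input is integrality.} For a nontrivial unipotent $u\in K$, the element $\delta^{-1}u\delta$ lies in $\pgl_2(\F_q[T^{q^m}])$. It therefore has a lift $M\in\gl_2(\F_q[T^{q^m}])$ with $\det M\in\F_q[T^{q^m}]^\times=\F_q^\times$. Write $M=\lambda N$ with $N\neq I$ unipotent over $\C_\infty$; then $\lambda^2=\det M\in\F_q^\times$.
\begin{itemize}
\item In characteristic $2$ this forces $\lambda\in\F_q$, because $\F_q$ is perfect.
\item In odd characteristic, $\lambda=\tfrac12\operatorname{tr}M$ is rational anyway.
\end{itemize}
Hence $M-\lambda I$ is a nonzero nilpotent matrix over $\F_q(T^{q^m})$, and its kernel is a rational line. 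That line is the unique fixed point $\delta^{-1}x$, which therefore lies in $\p^1(\F_q(T^{q^m}))\subseteq\p^1(F)$.

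The paper's own proof simply asserts that a nontrivial unipotent of $\pgl_2(F)$ has its fixed point in $\p^1(F)$. That is true for images of unipotent matrices in $\gl_2(F)$, or for elements admitting a lift with determinant in $\F_q^\times$ as here. The needed computation $\lambda^2\in\F_q^\times$ is exactly the one the paper carries out in the proof of Proposition~\ref{p:nofrob}. With this replacement, the rest of your argument goes through.
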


\begin{proof}
We first recall that every point of $\p^1(F)$ is fixed by a nontrivial
unipotent subgroup of $\pgl_2(A)$. Indeed, for $x=\infty$ this is clear from
the subgroup $U(A)$
If $x=a/b\in F$, with $a,b\in A$ coprime, choose $u,v\in A$ such that
$av-bu=1$, and set
$g=\begin{bmatrix}a&u\\ b&v\end{bmatrix}\in \gl_2(A)$.
Then $g\in\pgl_2(A)$, $g\infty=x$, and the conjugate $gU(A)g^{-1}$ is an
infinite unipotent subgroup of $\pgl_2(A)$ fixing $x$.

Now let $x\in\p^1(F)$. Since $K$ has finite index in $\pgl_2(A)$, the
intersection $gU(A)g^{-1}\cap K$
has finite index in $gU(A)g^{-1}$, and is therefore nontrivial. Fix a nontrivial unipotent element $u_x\in K$ fixing $x$.

Because $u_x\in K\subseteq \delta\pgl_2(A)^{(m)}\delta^{-1}$, there exists
$\gamma_x\in\pgl_2(A)^{(m)}$ such that
$u_x=\delta\gamma_x\delta^{-1}$. Hence $\gamma_x=\delta^{-1}u_x\delta$ is a nontrivial unipotent element of $\pgl_2(A)^{(m)}$ which fixes $\delta^{-1}x$. But a nontrivial unipotent element
of $\pgl_2(F)$ has a unique fixed point, which is in $\p^1(F)$. Therefore
\[
\delta^{-1}x\in \p^1(F).
\]

Applying this to $x=0,1,\infty$, we find that
\[
\delta^{-1}(0),\quad \delta^{-1}(1),\quad \delta^{-1}(\infty)
\]
all lie in $\p^1(F)$. A M\"obius transformation is uniquely determined by the
images of $0,1,\infty$, and therefore $\delta^{-1}$, hence also $\delta$,
is defined over $F$. Thus $\delta\in\pgl_2(F)$.
\end{proof}

\begin{prop}\label{p:nofrob}
There is no Frobenius twist in $S$, namely,
\[
S=\left\{\bigl(\gamma,\delta\gamma\delta^{-1}\bigr):\gamma\in\pgl_2(\C_\infty)\right\}
\]
for some $\delta\in\pgl_2(F)$.
\end{prop}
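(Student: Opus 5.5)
We start from the description \eqref{e:S} of $S$ as the graph of $\mathrm{Inn}(\delta)\circ\uptau^m$, with $m\ge 0$ and $\delta\in\pgl_2(\C_\infty)$. The goal is to prove that $m=0$. Once that is done, Lemma~\ref{l:commensurator-frobenius} applied with $m=0$ will give $\delta\in\pgl_2(F)$.

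The first step is to extract an arithmetic consequence from Lemma~\ref{l:jbranch}. For every $\gamma\in\Gamma_1$ we have $(\gamma,\mu(\gamma)^{-1})\in S$ with $\mu(\gamma)\in\pgl_2(A)$. Hence
\[
\delta\,\uptau^m(\gamma)\,\delta^{-1}=\mu(\gamma)^{-1}\in\pgl_2(A),
\]
so $\delta\,\uptau^m(\Gamma_1)\,\delta^{-1}\subseteq\pgl_2(A)$. This inclusion goes in the wrong direction for Lemma~\ref{l:commensurator-frobenius}. To reverse it, we run the symmetric argument with the roles of $X$ and $Y$ interchanged. That argument gives a finite-index subgroup $\Gamma_1'\le\pgl_2(A)$ and, for each $\eta\in\Gamma_1'$, a partner $\gamma\in\pgl_2(A)$ with $(\gamma,\eta)\in S$, that is, $\eta=\delta\uptau^m(\gamma)\delta^{-1}$. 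Consequently
\[
\Gamma_1'\subseteq \pgl_2(A)\cap\delta\,\pgl_2(A)^{(m)}\delta^{-1}=:K,
\]
so $K$ has finite index in $\pgl_2(A)$. Lemma~\ref{l:commensurator-frobenius} then yields $\delta\in\pgl_2(F)$ for any $m$. We record this first.

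The second step rules out $m>0$ by the counting idea from the introduction, applied to the unipotent subgroup $U(A)=\{\begin{bmatrix}1&a\\0&1\end{bmatrix}\}\simeq A$. The finite-index subgroup $K$ meets $U(A)$ in a finite-index additive subgroup $M\subseteq A$. Every element of $M$ is of the form $\delta u\delta^{-1}$ with $u$ a unipotent element of $\pgl_2(\F_q[T^{q^m}])$.

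Since $\delta\in\pgl_2(F)$, we clear denominators. Choose $d\in A$ such that $d\delta$ and $d\delta^{-1}$ have entries in $A$, and compare matrix entries. The conjugate $\delta^{-1}\begin{bmatrix}1&a\\0&1\end{bmatrix}\delta$ is $I+aN$ for a fixed nilpotent $N\in M_2(F)$, and its entries must lie in $\F_q[T^{q^m}]$ up to a scalar. The scalar is forced to be $1$, because the conjugate is unipotent and we may normalize trace/determinant. So for some fixed nonzero entry $n_0$ of $N$ we get $a\,n_0\in\F_q[T^{q^m}]$ for all $a\in M$, which gives $n_0M\subseteq\F_q[T^{q^m}]$.

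Now we count. A finite-index subgroup of $A$ contains a nonzero ideal $bA$, so it contains about $q^{D}$ elements of degree $<D$ for large $D$. On the other hand, the elements of $n_0^{-1}\F_q[T^{q^m}]$ of degree $<D$ number about $q^{D/q^m+O(1)}$. For $m>0$ and $D\to\infty$ this is a contradiction, so $m=0$, and then $\delta\in\pgl_2(F)$ as shown above.

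The main obstacle is the second step, specifically handling the projective scalar in $\pgl_2$ when comparing entries: a matrix in $\pgl_2(\F_q[T^{q^m}])$ is only defined up to a scalar in $\C_\infty^\times$. I would handle this by using that a unipotent class has a representative with both diagonal entries equal, and that this representative is unique up to a sign, which is irrelevant here. A secondary point is checking that the symmetric argument really supplies $\Gamma_1'$ inside $K$ rather than inside its inverse image; the fact that $S$ is a graph, and hence determines $\eta$ from $\gamma$, makes this consistent.
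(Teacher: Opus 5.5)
Your argument follows the paper's proof step by step. The symmetric version of Lemma~\ref{l:jbranch} shows that $K=\pgl_2(A)\cap\delta\,\pgl_2(A)^{(m)}\delta^{-1}$ has finite index, and Lemma~\ref{l:commensurator-frobenius} then gives $\delta\in\pgl_2(F)$. For $m>0$ you compare roughly $q^{D}$ unipotent elements of $K$ of degree $<D$ with roughly $q^{D/q^m}$ elements of $\F_q[T^{q^m}]$ of degree $<D+O(1)$.

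One justification is false as stated. A finite-index additive subgroup of $A$ need not contain a nonzero ideal $bA$. Take $q=p$ and let $M$ be the kernel of the $\F_p$-linear functional $\phi$ sending $T^n$ to $1$ if $n$ is a power of $2$ and to $0$ otherwise; it has index $p$. Suppose $bA\subseteq M$ with $\deg b=e$ and leading coefficient $b_e$. Taking $j=2^t-e$ with $t$ large, the monomials of $T^jb$ meet only the exponent $2^t$, so $\phi(T^jb)=b_e\neq 0$, a contradiction. Since $K$ need not be a congruence subgroup, you cannot assume ideal containment for your $M=\{a:u_a\in K\}$. The lower bound you need has a simpler proof, which is what the paper uses: $A_{<D}/(M\cap A_{<D})$ injects into $A/M$, so $\#(M\cap A_{<D})\ge q^{D}/[A:M]$.

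On the projective scalar, your proposed handling via ``a representative with both diagonal entries equal'' is not right. The diagonal entries of $I+aN$ are $1+aN_{11}$ and $1-aN_{11}$. Your claim that the scalar can be normalised away is nevertheless correct. Suppose $\lambda(I+aN)\in\gl_2(\F_q[T^{q^m}])$. Then $\lambda\in F$, because $I+aN$ has a nonzero entry in $F$. Also $\lambda^2=\det\bigl(\lambda(I+aN)\bigr)\in\F_q[T^{q^m}]^\times=\F_q^\times$, so $\lambda\in\F_q^\times$ and $I+aN$ itself has entries in $\F_q[T^{q^m}]$. The paper sidesteps this by observing only that $\lambda^2\in\F_q^\times$, so $\lambda$ lies in a fixed finite set; that costs just an $O(1)$ factor in the count.
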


\begin{proof}
Applying Lemma~\ref{l:jbranch} to $Z$, and then applying the same argument with the two coordinates interchanged, we obtain finite-index subgroups
$\Gamma_1,\Gamma_2\le\pgl_2(A)$ and maps
\[
\mu\colon\Gamma_1\rightarrow\pgl_2(A)
\qquad
\text{and}
\qquad
\nu\colon\Gamma_2\rightarrow \pgl_2(A)
\]
such that
\[
(\gamma,\mu(\gamma))\in S
\quad\text{for }\gamma\in\Gamma_1
\qquad
\text{and}
\qquad
(\nu(\eta),\eta)\in S
\quad\text{for }\eta\in\Gamma_2.
\]
By \eqref{e:S}
\[
S=\{(\gamma,\psi(\gamma)):\gamma\in\pgl_2(\C_\infty)\}
\]
for $\psi=\operatorname{Inn}(\delta)\circ\uptau^m$ and for some $m\ge 0$ and some $\delta\in\pgl_2(\C_\infty)$.
From $(\nu(\eta),\eta)\in S$, with $\nu(\eta)\in\pgl_2(A)$, we get
\[
\eta=\psi(\nu(\eta))\in \psi\big(\pgl_2(A)\big)\cap\pgl_2(A)
\]
for every $\eta\in\Gamma_2$. Hence
\[
K:=\pgl_2(A)\cap\psi\big(\pgl_2(A)\big)
   =\Gamma\cap \delta\,\pgl_2(A)^{(m)}\,\delta^{-1}
\]
has finite index in $\pgl_2(A)$. By Lemma~\ref{l:commensurator-frobenius},
$\delta\in\pgl_2(F)$.

It remains to show that $m=0$. Suppose, for contradiction, that $m>0$.
Let
\[
U(d):=
\left\{
u_f=\begin{bmatrix}1&f\\0&1\end{bmatrix}\in\Gamma:
\deg_T f\le d
\right\}.
\]
Since $K$ has finite index in $\pgl_2(A)$, the subgroup
\[
B:=\{f\in A: u_f\in K\}
\]
has finite index in the additive group $A$. Therefore
\begin{equation}\label{eq:lowerU-new}
\#(U(d)\cap K)=\#\{f\in B:\deg_T f\le d\}\ge q^{d+O(1)}.
\end{equation}

Now take $u_f\in U(d)\cap K$. Since $u_f\in\psi\big(\pgl_2(A)\big)$, there exists
$g\in\pgl_2(A)$ such that
\[
u_f=\psi(g)=\delta\,\uptau^m(g)\,\delta^{-1}.
\]
Thus
\[
\delta^{-1}u_f\delta\in \Gamma^{(m)}
=\pgl_2(\F_q[T^{q^m}]).
\]

Choose a representative
\[
\tilde\delta=\begin{bmatrix}a&b\\ c&d\end{bmatrix}\in\gl_2(F)
\]
of $\delta$, and let $D=ad-bc$. A direct computation gives
\[
\tilde\delta^{-1}u_f\tilde\delta
=
\begin{bmatrix}
1+\frac{cd}{D}f & \frac{d^2}{D}f\\[4pt]
-\frac{c^2}{D}f & 1-\frac{cd}{D}f
\end{bmatrix}.
\]
At least one entry has the form $\alpha+\beta f$, with
$\alpha,\beta\in F$ and $\beta\neq 0$.

Since the class of $\tilde\delta^{-1}u_f\tilde\delta$ lies in
$\pgl_2(\F_q[T^{q^m}])$, there exists
$\lambda=\lambda(f)\in\C_\infty^\times$ such that
\[
\lambda\,\tilde\delta^{-1}u_f\tilde\delta
\in \gl_2\big(\F_q[T^{q^m}]\big).
\]
Taking determinants, and using
$\det(\tilde\delta^{-1}u_f\tilde\delta)=1$, we obtain
$\lambda^2\in \F_q^\times$.
Hence $\lambda$ belongs to a fixed finite set.

For the chosen entry $\alpha+\beta f$, set
\[
h_f:=\lambda(f)(\alpha+\beta f)\in \F_q[T^{q^m}].
\]
Since $\alpha,\beta\in F$ are fixed and $\deg_T f\le d$, we have
\[
\deg_T h_f\le d+O(1).
\]
For fixed $\lambda$ and fixed $h_f$, the equality $h_f=\lambda(\alpha+\beta f)$
determines $f$ uniquely. Therefore
\[
\#(U(d)\cap K)
\le
O(1)\cdot
\#\{h\in\F_q[T^{q^m}]:\deg_T h\le d+O(1)\}
=
q^{d/q^m+O(1)}.
\]
This contradicts \eqref{eq:lowerU-new} as $d\to\infty$, because $m>0$
implies $q^{-m}<1$. Hence $m=0$, and so
$\psi(\gamma)=\delta\gamma\delta^{-1}$.
\end{proof}

\begin{eg}
    The subset $L=\{(\gamma, \uptau(\gamma)) : \gamma\in\pgl_2(\C_\infty)\}$ is not allowed. Indeed, in this case the sublattice given by $ \uptau\left(\pgl_2(\F_q[T])\right)= \pgl_2(\F_q[T^q])$  has infinite index: the group $\pgl_2(\F_q[T])$ is not a  union of finitely many cosets of $\pgl_2(\F_q[T^q])$ (in fact, already $\F_q[T]$ is not a union of finitely many cosets of $\F_q[T^q]$).
\end{eg}

We recall the following standard geometric lemma.

\begin{lem}\label{l:diagonal-invariant-curve}
Let $k$ be an algebraically closed field, and let $Y\subset \p^1_k\times \p^1_k$
be an irreducible closed curve with dominant projections. If $Y$ is invariant
under the diagonal action of $\pgl_2(k)$, then $Y$ is the diagonal.
\end{lem}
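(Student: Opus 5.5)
The plan is to exploit the $3$-transitivity-type richness of the diagonal $\pgl_2(k)$-action on $\p^1_k\times\p^1_k$. Its orbits are exactly two: the diagonal $\Delta=\{(x,x)\}$, which is closed of dimension $1$, and its complement $(\p^1\times\p^1)-\Delta$, which is open of dimension $2$. The orbit structure follows from the fact that $\pgl_2(k)$ acts transitively on $\p^1(k)$, and that the stabilizer of a point, the Borel subgroup fixing it, acts transitively on the remaining points of $\p^1(k)$. This last step uses the affine maps $x\mapsto ax+b$ after moving the fixed point to $\infty$.

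Given this, suppose $Y$ is an irreducible closed curve stable under the diagonal action. Then $Y(k)$ is a union of orbits. The complement orbit cannot lie in $Y$, since it is $2$-dimensional and Zariski dense in $\p^1\times\p^1$; if it were contained in $Y$, the closedness of $Y$ would give $Y=\p^1\times\p^1$, contradicting $\dim Y=1$. Hence $Y(k)\subseteq\Delta(k)$. Since $k$ is algebraically closed, closed subvarieties are determined by their $k$-points, so $Y\subseteq\Delta$. Both $Y$ and $\Delta$ are irreducible curves, so $Y=\Delta$.

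The hypothesis of dominant projections is then automatically satisfied by $\Delta$. It serves only to exclude degenerate curves, but those are not invariant anyway: a fiber $\{x\}\times\p^1$ is moved by any $g$ with $gx\neq x$. So the argument does not need it beyond consistency.

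The only step needing care is the transitivity of the action on pairs of distinct points, i.e.\ that the orbit of $(0,\infty)$ is all of $\{(x,y):x\neq y\}$. This is checked directly. Given distinct $x,y$, choose a Möbius transformation sending $0\mapsto x$ and $\infty\mapsto y$; for finite $x,y$ one can take $z\mapsto (yz+x)/(z+1)$, and the cases where $x$ or $y$ is $\infty$ are handled analogously. This is routine, so I expect no genuine obstacle. The point to state explicitly is that $Y$ is identified with its set of $k$-points, which is legitimate because $k=\overline k$.
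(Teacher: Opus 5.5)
Your proposal is correct and follows essentially the same argument as the paper: the diagonal action has the two orbits $\Delta$ and its dense complement, so invariance together with closedness forces $Y\subseteq\Delta$, and irreducibility then gives $Y=\Delta$. Your remark that the dominance hypothesis is not needed is also right. The paper uses it only to rule out $Y$ being a point, and this already follows from $Y$ being a curve.
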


\begin{proof}
Let $\Delta=\{(z,z):z\in\p^1_k\}$
and let $\Delta^c:=(\p^1_k\times\p^1_k)-\Delta$.
The diagonal action of $\pgl_2(k)$ has two orbits, which are $\Delta$ and $\Delta^c$. Indeed, $\Delta^c$ is the set of ordered pairs of distinct points, and
$\pgl_2(k)$ acts transitively on such pairs.
If $Y\cap \Delta^c\neq\emptyset$, then invariance gives $\Delta^c\subseteq Y$. Since
$Y$ is closed and $\Delta^c$ is Zariski dense in $\p^1_k\times\p^1_k$, this would imply
$Y=\p^1_k\times\p^1_k$,
contradicting that $Y$ is a curve. Hence $Y\subseteq\Delta$. Since the
projections of $Y$ are dominant, $Y$ is not a point. Therefore $Y=\Delta$.
\end{proof}

\begin{proof}[Proof of Theorem~\ref{p:hyperbolicAL}]
By Proposition~\ref{p:nofrob}, we have
$S=
\left\{
\bigl(\gamma,\delta\gamma\delta^{-1}\bigr):
\gamma\in\pgl_2(\C_\infty)
\right\}$
for some $\delta\in\pgl_2(F)$.

Let
$\overline Z\subset \p^1_{\C_\infty}\times \p^1_{\C_\infty}$
be the Zariski closure of $Z$, and set
$Y:=(\id\times\delta^{-1})(\overline Z)$.
Then $Y$ is an irreducible closed curve with dominant projections.

We show that $Y$ is invariant under the diagonal action of $\pgl_2(\C_\infty)$. Indeed, by the definition of $S$, the curve
$\overline Z$ is $S$-invariant. Hence for every
$\gamma\in\pgl_2(\C_\infty)$,
$\bigl(\gamma,\delta\gamma\delta^{-1}\bigr)\overline Z=\overline Z$.
Conjugating by $\id\times\delta^{-1}$, we obtain
$(\gamma,\gamma)Y=Y$.
Thus $Y$ is invariant under the diagonal action.

By Lemma~\ref{l:diagonal-invariant-curve}, we get $Y=\Delta$.
Therefore
$\overline Z=(\id\times\delta)(\Delta)$.
Since $\delta\in\pgl_2(F)\subset\pgl_2(F_\infty)$, it preserves $\Omega$.
Thus, on $\Omega^2$,
\[
Z=\{(z,\delta z):z\in\Omega\}.
\]

By \eqref{e:modular-corr}, since $\delta\in\pgl_2(F)$, there exists
$N\in A-\{0\}$ such that
\[
\bigl(j(z),j(\delta z)\bigr)\in Y_0'(N)
\qquad\text{for all }z\in\Omega.
\]
Therefore $\boldsymbol j(Z)\subseteq Y_0'(N)$.
Since $Z=\{(z,\delta z):z\in\Omega\}$, then
$\boldsymbol j(Z)=Y_0'(N)$.
Hence $\boldsymbol j(Z)$ is weakly special.
\end{proof}

\subsection{Rigid analytic Pila--Wilkie}

\subsubsection{Overconvergent version}

Let us begin by refreshing some terminology from  \cite{rigidpilawilkie}. We refer to \cite{kato} for an extensive and detailed treatment of any rigid-analytic notion we make use of here.
\\

We denote by $\F_q[\![T]\!]\llangle \mathbf{x} \rrangle$, where $\mathbf{x}=(x_1,\dots,x_n)$, the {\em restricted power series $\F_q[\![T]\!]$-ring}, namely the $T$-adic completion of $\F_q[\![T]\!][\mathbf{x}]$. A {\em topologically of finite type} $\F_q[\![T]\!]$-algebra $A$ is an $\F_q[\![T]\!]$-algebra isomorphic to an $\F_q[\![T]\!]$-algebra of the form $\F_q[\![T]\!]\llangle \mathbf{x} \rrangle/\gota$, where $\gota$ is an ideal. A topologically of finite type $\F_q[\![T]\!]$-algebra is {\em admissible} if it is $T$-torsion free. We denote by $\mathbf{Ad}_{\F_q[\![T]\!]}$ the category of admissible $\F_q[\![T]\!]$-algebras and $\F_q[\![T]\!]$-algebra morphisms.

An {\em affinoid $\F_q(\!(T)\!)$-algebra} $\acal$ is an $\F_q(\!(T)\!)$-algebra isomorphic to an $\F_q(\!(T)\!)$-algebra of the form $A[1/T]$, where $A$ is  a topologically of finite type $\F_q[\![T]\!]$-algebra. The category of affinoid $\F_q(\!(T)\!)$-algebra is denoted by $\mathbf{Af}_{\F_q(\!(T)\!)}$. A {\em formal model} of  $\acal$ is a pair $(A,f)$ consisting of a topologically of finite type $\F_q[\![T]\!]$-algebra $A$ and of an isomorphism $f\colon A[1/T]\rightarrow \acal$. Such a formal model is admissible if so is $A$.
\\

Denote by $F_\alpha$ a non-archimedean local field containing $\F_q(\!(T)\!)$, and let $V_\alpha$ be its valuation ring with uniformizer $\varpi$. Let $\alpha\colon \F_q[\![T]\!]\rightarrow V_\alpha$ be an adic morphism such that $\alpha(T)=u\varpi^{r_\alpha}$, with $u\in V_\alpha^\times$ and $r_\alpha\in \Z_{>0}$. The fiber $(\Sp\acal)_\alpha$ is an $F_\alpha$-affinoid space.
We immediately move to the overconvergent setting (see \cite[4.3]{kato} for a general characterization of overconvergent subsets). For $\delta\in\Q_{>0}$, we denote by $\Sp\acal^{\natural \delta}$ the intersection of $\Sp\acal$ with the polydisk of radius $|T^{\delta}|$. For the rigid-analytic constructions, see \cite[Section 2.10]{rigidpilawilkie}.

We set
\[
(\Sp\acal)(\F_q(T),H):=\big\{f\in\Sp\acal\cap\F_q(T )^n : \max_{1\le i\le n}\{\deg_T(f_i)\}\le  \log_qH\big\}\ .
\]
An irreducible closed subset of $\Sp\F_q(\!(T)\!)\llangle\mathbf{x}\rrangle$ is said {\em algebraic} if it is an irreducible component of a closed set defined by an ideal of $\F_q(\!(T)\!)\llangle\mathbf{x}\rrangle$ obtained by extension from an ideal of $\F_q(\!(T)\!)[\mathbf{x}]$. Lastly, we define $(\Sp\acal)^\text{alg}$ as the union of every positive dimensional algebraic set in $\Sp\acal$, and $(\Sp\acal)^{\text{tran}}$ as its complement.

We can now state the overconvergent form of the counting theorem (see \cite[Theorem 1.3.1]{rigidpilawilkie} for the proof).  Let $\acal$ be an affinoid $\F_q(\!(T)\!)$-algebra. Then for every $\varepsilon >0$ and for any positive integer $r_\alpha$ there exists a positive constant $C(\acal,\varepsilon,\delta,r_\alpha)$ such that
\[
		\#(\Sp \acal^{\natural\delta})^{\text{tran}}(\F_q(T),H)\le C(\acal,\varepsilon,\delta,r_\alpha)\cdot H^\varepsilon\;.
\]

	\subsubsection{Refinement in blocks}\label{s:bk}
    Following \cite[Theorem 5.1.3]{rigidpilawilkie}, we now introduce the counting theorem in an overconvergent version which is uniform in families and that, in the archimedean setting, corresponds to the block version of the counting theorem as in \cite[Theorem 3.6]{pilaAO}.
\\

Let $\boldsymbol{\delta}=(\delta_1\dots,\delta_n)\in\Q_{>0}^n$ and $\nu=(\nu_1\dots,\nu_n)\in\N^n$; let also $\boldsymbol{\delta}\cdot\nu=\delta_1\nu_1+\dots+\delta_n\nu_n$. We define the affinoid algebra with a convergence condition as
\begin{equation}\label{e:polydisk}
\acal \llangle\mathbf{x};\boldsymbol{\delta} \rrangle:=\left\{\sum_\nu a_\nu\mathbf{x}^\nu\in\acal[\![T]\!] : |T|^{-\boldsymbol{\delta}\cdot\nu}|a_\nu|\to 0\;\text{as}\;|\nu|\to\infty\right\},
\end{equation}
\[
\mathbf{f}=(f_1,\dots,f_n)\in\calb^n
\]
of elements which are algebraic over $\acal[\mathbf{x}]$ and generate $\calb$ over $\acal$. Consider the morphism
\[
\varphi\colon\calx=\Sp\calb\rightarrow\caly=\Sp\acal
\]
giving a family in the polydisk of polyradius $\vert T^{-\boldsymbol{\delta}}\vert=(\vert T\vert^{-\delta_1},\dots,\vert T\vert^{-\delta_n})$.

Let  $\alpha\colon\Sp K\rightarrow \caly$ be a classical point: by \cite[Lemma~2.2.2]{rigidpilawilkie}, any such a point of a rigid space of finite type over $\text{Spf}(\F_q[\![T]\!])^\text{rig}$ is isomorphic to $\text{Spf}(V)^\text{rig}$, where $V$ is a complete discrete valuation ring with finite residue field. For the general notion of classical point, see \cite[8.2.(c)]{kato}.

Consider $\calx_\alpha=\Sp(\calb\widehat{\otimes}_\acal K)$. We assume that $F$ is a non-archimedean local subfield of $K$ and $K$ is a finite extension of $F$, and consider the classical point $\beta\colon \Sp F\rightarrow \Sp\F_q[\![T]\!]$, so that we have the following commutative diagram
\[
\begin{tikzcd}
    \calx_\alpha \arrow{r}\arrow{d}& \Sp \calb \arrow{d}{\varphi}
    \\
    \Sp K \arrow{d}\arrow{r}{\alpha}& \Sp\acal\arrow{d}
    \\
    \Sp F\arrow{r}{\beta}&\Sp\F_q(\!(T)\!)
\end{tikzcd}
\]
For such a diagram, we say that $\alpha$ lies over $\beta$. We underline that, despite our goal is to count $F$-points in affinoid spaces, a finite extension $K$ is necessary whenever we consider the base change $\F_q(\!(T)\!)\rightarrow \F_q(\!(T^{1/m})\!)$ for some positive integer $m$. Fix a extension $K'$ of $F$ such that $K\subseteq K'$ and consider the classical point $\alpha'\colon\Sp K'\rightarrow\calx_\alpha$, with corresponding algebra map $\alpha'^{\#}\colon\calb_\alpha\rightarrow K'$ such that $\alpha'^{\#}(f_i)\in F$ and $\alpha'^{\#}(x_i)\in F$ for every $i\in\{0,\dots,n\}$. Then we denote by $H(\alpha';\boldsymbol{f})$ the height of $\alpha'$, which is defined as \[
H(\alpha';\boldsymbol{f}):=\max_{1\le i\le n}\{H(\alpha'^\#(f_i))\}.
\]
We can thus consider
\[
\calx_\alpha(V_F,H;\boldsymbol{f}):=\big\{\alpha'\colon\Sp K'\rightarrow \calx_\alpha^\natural \;|\; H(\alpha';\boldsymbol{f})\le H,\; \alpha'^\#(f_i),\alpha'^\#(x_i)\in F\;\text{for all}\; i\big\}.
\]

In order to state the main counting theorem, we still need a few more definitions.
\\An irreducible analytic subspace $Z\hookrightarrow \Sp K\llangle \mathbf{x};\boldsymbol{\delta}\rrangle$ is {\em algebraic} if there exists an algebraic subvariety $\calz\hookrightarrow \Spec K\llangle \mathbf{x};\boldsymbol{\delta}\rrangle$ such that $\dim Z=\dim\calz$ and such that the set of closed points of $\calz$ is contained in $Z$.  Naturally, an analytic subspace $Z$ is algebraic if every of its irreducible components is such. For $\varphi$ and $\alpha$ as above, the family $\varphi$ have {\em algebraic fibers} if every fiber $\calx_\alpha$ is algebraic. Moreover, by {\em maximal fiber dimension} of $\varphi$ we simply mean
$\max_\alpha\dim\calx_\alpha$
where $\alpha$ varies over the classical points of $\caly$.

For $\acal$ and $\calb$ as above, let $A$ be an admissible formal model of $\acal$, which induces an admissible formal model $B$ of $\calb$. Let also $r_\beta$ be the positive integer such that  $T=u\varpi^{r_\beta}$, for $\varpi$ the uniformizer of $V_F$ and $u$ a unit. Naturally, we consider
\[
A\llangle \boldsymbol{x};\boldsymbol{\delta} \rrangle=\left\{
\sum_\nu a_\nu\mathbf{x}^\nu \in\acal[\![T]\!]: |a_\nu|\le|T|^{-\boldsymbol{\delta}\cdot\nu}
\right\},
\]
which again is independent of the norm $|\cdot|$.
Let $T$ denote a finite directed rooted tree with the orientation away from the root. If $v\rightarrow u$ is a directed edge of $T$, then $u$ is dubbed {\em child} of $v$, and vertex with no children is called a {\em leaf}. By considering the rooted tree $T$ as a category with a minimal object, we obtain functors of admissible and affinoid algebras
\[
A_*\colon T\rightarrow \mathbf{Ad}_{\F_q[\![T]\!]}
\qquad
\text{and}
\qquad
A[1/T]_*\colon T\rightarrow \mathbf{Af}_{\F_q(\!(T)\!)}\;
\]
called {\em diagrams}. For a diagram $A_*$, an ideal $\gota_*$ is simply a collection of ideals $\gota_v$ of $A_v$ for every vertex $v$ such that, for any edge $v\rightarrow u$, one has $\gota_vA_u\subset\gota_u$. In particular, the diagram $\F_q[\![T^{1/N_*}]\!]$ modeled on $T$ consists of $\F_q[\![T^{1/N_v}]\!]$ for any vertex $v$, and a positive integer $N_v$ such that, $N_u$ is a multiple of $N_v$ for any edge $v\to u$, and $\F_q[\![T^{1/N_v}]\!]\hookrightarrow\F_q[\![T^{1/N_u}]\!]$ is uniquely determined by  $T^{1/N_v}\to (T^{1/N_u})^{N_u/N_v}$.
Lastly, a {\em transformation diagram} (modeled on $T$) consists of a chain of morphisms
\[
\F_q[\![T]\!]\rightarrow A_*\rightarrow A_*\llangle\mathbf{x}_*;\boldsymbol{\delta}_* \rrangle\rightarrow B_*=A_*\llangle\mathbf{x}_*;\boldsymbol{\delta}_* \rrangle/\gota_*
\]
of diagrams of admissible $\F_q[\![T]\!]$-algebras such that:
\begin{enumerate}
    \item[(a)] for any vertex $v$, we have $\mathbf{x}_v$ and $\boldsymbol{\delta}_v\in\Q_{>0}^n$;
    \item[(b)] for any edge $v\rightarrow u$, we have $\boldsymbol{\delta}_v\ge \boldsymbol{\delta}_u$;
    \item[(c)] for any edge $v\rightarrow u$, we have $\gota_vA_u\llangle\mathbf{x}_u;\boldsymbol{\delta}_u\rrangle\subset \gota_u$.
\end{enumerate}
Analogously, by considering a diagram $A[1/T]_*\colon T\rightarrow\mathbf{Af}_{F_q(\!(T)\!)}$, one obtain the corresponding transformation diagram of affinoid $\F_q(\!(T)\!)$-algebras.

   \begin{thm}\label{t:blockcounting}
       For every $\varepsilon>0$ there exists a transformation diagram
       \[
       \F_q[\![T^{1/N_*}]\!]\rightarrow A_*\rightarrow A_*\llangle\mathbf{x}_*,\boldsymbol{\delta}_*\rrangle\rightarrow B_*=A_*\llangle\mathbf{x}_*,\boldsymbol{\delta}_*\rrangle/\gota_*
       \]
       of admissible $\F_q[\![T]\!]$-algebras modeled on a rooted tree $T$ with root $v_0$, and for every positive integer $\sigma$, there exists a constant $C(B,\varepsilon,\sigma)$ such that:
       \begin{enumerate}
           \item the map $A_{v_0}\rightarrow B_{v_0}$ coincides with $A\rightarrow B$;
           \item for any leaf $w$, the family $\varphi_w\colon\calx_w=\Sp\calb_w\rightarrow\caly_w=\Sp\acal_w$ has equidimensional algebraic fibers. Moreover, the family is algebraic over $\caly_w$;
           \item for any classical point $\alpha\colon\Sp K\rightarrow\caly$ above $\beta\colon\Sp F\rightarrow \Sp\F_q(\!(T)\!)$, we have
           \[\calx_\alpha(V_F,H;\mathbf{f})\ \subset \bigcup_{(w_j,\alpha_j)}(\calx_{w_j})_{\alpha_j}(V_F,H;\mathbf{f})\ \subset \ \calx_\alpha
           \]
           where $(w_j,\alpha_j)$ runs over a set of $C(B,\varepsilon,\sigma)\cdot H^\varepsilon$ pairs, where $w_j$ is a leaf of $T$ and $\alpha_j$ is a classical point of $\caly_{w_j}$ lying over $\alpha$.
       \end{enumerate}
   \end{thm}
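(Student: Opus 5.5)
This statement is the block (family) version of the rigid analytic counting theorem, and my plan is to obtain it from \cite[Theorem~5.1.3]{rigidpilawilkie} rather than reprove it. The work then reduces to matching our setting with the hypotheses and notation of that result. First I will check that our affinoid data $\acal\to\calb=\acal\llangle\mathbf{x};\boldsymbol{\delta}\rrangle/\gota$ over $\F_q(\!(T)\!)$, with $\mathbf{f}$ algebraic over $\acal[\mathbf{x}]$, admit admissible formal models $A\to B$. Admissibility means $T$-torsion freeness, which one gets by quotienting by the $T$-power torsion. Then $(1)$ holds by construction at the root. I also need our height $H(\alpha';\mathbf{f})$, defined through $\deg_T$ of the coordinates, to agree up to a bounded power with the height used in \cite{rigidpilawilkie}. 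A polynomial change in the exponent is harmless, since $\varepsilon$ is arbitrary.

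If instead one wants to see the mechanism, I would proceed by induction on $\dim\calx$ and on the fiber dimension, in the following order.
\begin{itemize}
\item[(i)] Use the overconvergent counting theorem \cite[Theorem~1.3.1]{rigidpilawilkie}, uniformly in the parameter $\alpha$. Its proof uses the rigid determinant method: interpolation determinants on $\Sp\acal^{\natural\delta}$. It shows that the points of height $\le H$ in a fiber $\calx_\alpha$ lie on $C H^{\varepsilon}$ hypersurfaces of degree $d(\varepsilon)$.
\item[(ii)] Intersect with these hypersurfaces. This means passing from $B_v$ to a quotient $B_u=B_v\llangle\mathbf{x}_u;\boldsymbol{\delta}_u\rrangle/\gota_u$ in which the hypersurface coefficients become new parameter coordinates of $\caly_u$. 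This creates a child vertex $u$ with $\boldsymbol{\delta}_u\le\boldsymbol{\delta}_v$ and $\gota_vA_u\subset\gota_u$, exactly as in conditions (a)--(c).
\item[(iii)] Where the local uniformization requires ramified base change $T\mapsto T^{1/N}$, record it in $N_u$, with $N_v\mid N_u$.
\end{itemize}
The tree is finite because each step either lowers the fiber dimension or lowers the dimension of the analytic (non-algebraic) part. At the leaves the fibers are algebraic and equidimensional, which is $(2)$. Condition $(3)$ follows by multiplying the $CH^{\varepsilon}$ bounds along the at most $\operatorname{depth}(T)$ branchings, after rescaling $\varepsilon$.

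The main obstacle is uniformity of the constant in $\alpha$. The constant $C(B,\varepsilon,\sigma)$ must not depend on which classical point $\alpha$, or which local field $K$ of bounded ramification $\sigma$, we specialize to. The determinant-method constants depend on Weierstrass-type norms of the family, and these must be controlled on the whole formal model $B$, not fiber by fiber. I would handle this by working throughout with the admissible formal models and the overconvergent radii $\boldsymbol{\delta}$. These give Cauchy estimates for all fibers simultaneously, and the bounded ramification index $\sigma$ fixes the possible $r_\alpha$, so only finitely many base changes $N_*$ occur. This is precisely the content of \cite[Section~5]{rigidpilawilkie}, which I would cite for the details.
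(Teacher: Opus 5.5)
Your proposal matches the paper's proof: both obtain the statement by specializing the family counting theorem \cite[Theorem~5.1.3]{rigidpilawilkie} (with $\mathcal O=\F_q$), so the mechanism sketch you add is optional. The one point the paper spells out that you leave implicit is the second inclusion in $(3)$: condition $(c)$ makes each edge $v\to u$ on the path from the root to $w_j$ induce a closed immersion on fibers, hence $(\calx_{w_j})_{\alpha_j}\subset\calx_\alpha$.
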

   \begin{proof}
This is a specialized version of the more general \cite[pp.31-32]{rigidpilawilkie} (as we pick $\mathcal{O}$ to be $\F_q$). We now spell out the second inclusion of part~$(3)$.
For each pair $(w_j,\alpha_j)$ with $\alpha_j$ lying over $\alpha$, the morphism
$(\calx_{w_j})_{\alpha_j}\rightarrow \calx_\alpha$ is obtained by base change along
$v_0\to w_j$ where $v_0$ is the root. Along every edge $v\to u$ on this path, condition~$(c)$ gives
$\mathfrak a_v A_u\llangle \mathbf{x}_u;\boldsymbol{\delta}_u\rrangle\subset \mathfrak a_u$,
so the induced map on fibers is a closed immersion, so that
$(\calx_{w_j})_{\alpha_j}\subset \calx_\alpha$.
   \end{proof}
   Note that in part (3) of the above theorem $\mathbf{f}$ on the right hand side still denotes, by slight abuse of notation, the image of $\mathbf{f}$ in $(B_{w_j})_{\alpha_j}$.

\begin{rmk}\label{r:blockcounting-finite-extensions}
For a finite extension $L/F$, let
$\calx_\alpha(L,H;\boldsymbol f)$
denote the corresponding set of $L$-points, measured with the absolute
Weil height. Theorem~\ref{t:blockcounting} remains valid for these points.
Moreover, if $[L:F]\le d$, its constants may be chosen uniformly in $L$,
with an additional dependence only on $d$. Indeed, the same proof applies over $L$, with constants depending on $L$
only through $[L:F]$.
\end{rmk}

\begin{rmk}
    Theorem \ref{t:blockcounting} is not effective.  A path to an effective version could involve extending the results of \cite{gbfoliation} to the function field setting.
\end{rmk}

\subsection{Andr\'e--Oort--Manin--Mumford}
	\subsubsection{Manin--Mumford for  two Drinfeld modules}

 We sketch now a  specialization argument.
Let $\cald$ be a Drinfeld $A$-module over $\C_\infty$ of rank $r$, defined by $\varphi_T=\iota(T)+g_1\uptau+\dots+g_r\uptau^r$. Recall that we suppose all our Drinfeld modules to be of generic characteristic, which implies that  $\cald[a]$ is finite \'etale for any nonzero $a\in A$. For $R:=A[g_1,\dots,g_r,g_r^{-1}]$ there is a (family of) Drinfeld $A$-module over
$S=\Spec R$ defined by the same formula. We take a closed point $s\in S$ whose residue field
$L$ is a finite extension\footnote{Such a point exists, as it corresponds to a maximal ideal $\gotm$ of $R$ such that $R/\gotm\simeq L$.} of $F$ and specializing the coefficients $g_i$'s to $L$ yields a Drinfeld module over $L$ of the same rank, and still of generic characteristic. Hence its $a$-torsion remains finite \'etale, so the number of its $a$-torsion points over $\overline L$ is $q^{r\deg_T(a)}$, i.e., no $a$-torsion points collapse under this specialization.

We shall thus reduce to consider two Drinfeld $A$-modules $\cald$ and $\cald'$ defined over $L$. For their product $\cald\times \cald'$ and a $\C_\infty$ curve $V$ in it defined by the zero locus of a polynomial $F(X,Y)\in \C_\infty[X,Y]$, it is enough to take $R$ with also the coefficients of $\varphi'_T$ and of $F(X,Y)$, so that our curve spreads out to a closed subscheme of $\A^2_R$.

A {\em torsion subvariety} of the Anderson $A$-module $\cald\times\cald'$ is defined to be a translate of an irreducible sub-$\Phi$-module of $\cald\times\cald'$ by a torsion point.

	 \begin{thm}
	 	Let $\cald$ and $\cald'$ be two Drinfeld $A$-modules over $\C_\infty$ of the same rank $r$. Let $V$ be an irreducible closed $\C_\infty$-subvariety of  $\cald\times\cald'(\C_\infty)$.  If $V\cap (\cald\times\cald')_{\text{tor}}(\C_\infty)$ is Zariski dense in $V$, then $V$ is a torsion subvariety.
	 \end{thm}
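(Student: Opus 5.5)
We follow the Pila--Zannier strategy. Since $\cald\times\cald'\simeq\A^2$, the cases $\dim V=0$ and $\dim V=2$ are immediate: a point with dense torsion is itself torsion, and $V=\cald\times\cald'$ is special. So we assume $V$ is an irreducible curve with Zariski dense torsion, and we may take $V$ neither horizontal nor vertical. By the specialization argument above, we may assume $\cald,\cald'$ and $V$ are defined over a finite extension $L/F$, with ranks and torsion structure unchanged. The goal is to show that $\exp_\Phi^{-1}(V)$ contains a positive-dimensional algebraic curve. Theorem~\ref{alw} then makes $V$ a translate of a sub-$\Phi$-module, and density of torsion forces the translate to be torsion.

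\emph{Counting set.} Consider the linear map $\ell\colon\A^{2r}\to\G_a^2$, $(a,b)\mapsto(\sum a_i\xi_i,\sum b_j\xi'_j)$, and the rigid analytic set
\[
X:=\{(a,b):\ |a_i|_\infty,|b_j|_\infty\le q^{-1},\ \exp_\Phi(\ell(a,b))\in V\}.
\]
By Lemma~\ref{l:torsionball}, every torsion point $x\in V$ has a reduced representative $z_x$, and its coordinate vector $t_x\in F^{2r}$ lies in $X$. The set $X$ is a closed analytic subset of a closed polydisc. It is cut out by $G(\exp_\Lambda(\ell_1),\exp_{\Lambda'}(\ell_2))$ with entire functions, so it is overconvergent (it extends to a slightly larger polydisc). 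After rescaling the coordinates by $T$ and the base change to $\F_q(\!(T)\!)$ implicit in the setup, $X$ fits Theorem~\ref{t:blockcounting}. By Lemma~\ref{l:heightorder}, $H(t_x)\le|\ord x|_\infty$.

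\emph{Galois lower bound versus block counting.} Let $x\in V$ be torsion of order $a$. Each $\sigma\in\Gal(\bar L/L)$ sends $x$ to a torsion point of $V$ (since $V$ is defined over $L$) of the same order, because $\Phi$ is defined over $L$. This gives at least $C'|a|_\infty^{\eta}$ points $t_{\sigma x}\in X(F)$ of height at most $|a|_\infty$, by Lemma~\ref{masserboundtorsion}. Apply Theorem~\ref{t:blockcounting} with $\varepsilon<\eta/2$, together with Remark~\ref{r:blockcounting-finite-extensions} if needed. These points are then covered by $C|a|_\infty^{\varepsilon}$ algebraic blocks $(\calx_{w_j})_{\alpha_j}\subset X$. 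Hence for $|a|_\infty$ large, some single block contains at least $|a|_\infty^{\eta/2}$ of the conjugates.

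\emph{From a block to an algebraic curve (main obstacle).} The difficulty is the one flagged in the introduction. The block $B$ is an algebraic subset of $X$ of positive dimension, but it may lie entirely inside a fiber of $\ell$, which has dimension $2r-2$. In that case $\ell(B)$ is a single point, and $B$ contains only one torsion lift, since $\ell$ is injective on $F^{2r}$ by linear independence of the $\xi_i$ over $F_\infty$. A block containing many distinct conjugates therefore has $\ell(B)$ not a point, and then $\ell(B)$ is a positive-dimensional constructible algebraic set inside $\exp_\Phi^{-1}(V)$. Its Zariski closure is contained in $\exp_\Phi^{-1}(V)$ by analytic continuation, because the defining equation $G\circ\exp_\Phi$ vanishes on a set with nonempty interior in a curve. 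Take $W\supseteq\overline{\ell(B)}$ maximal. If $W=\A^2$ then $V$ is everything, a contradiction, so $W$ is a curve. By Theorem~\ref{alw}, $V=\exp_\Phi(W)$ is a translate $y+H$ of a one-dimensional sub-$\Phi$-module $H$. We expect this step to require the most care, both in verifying the overconvergence and uniformity hypotheses for $X$ and in excluding degenerate blocks.

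\emph{Torsion translate.} It remains to show that $y+H$, with $H$ connected and infinite, containing dense torsion, is a torsion translate. Pick a torsion point $x_0\in V$. Then $V=x_0+H$, since $x_0-y\in H$. This completes the proof.
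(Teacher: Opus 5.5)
Your proposal is correct and follows essentially the same route as the paper: the torsion-logarithm polydisk parametrised through $\ell$, Lemma~\ref{masserboundtorsion} played against Theorem~\ref{t:blockcounting}, the dichotomy that a block on which $\ell$ is constant carries at most one counted point while any other block feeds into Theorem~\ref{alw}, and the same torsion-translate ending. Your direct pigeonhole is only a rephrasing of the paper's argument by contradiction, and your analytic-continuation remark justifying $\overline{\ell(B)}\subseteq\exp_\Phi^{-1}(V)$ actually fills a step the paper leaves implicit.

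Two small points that the paper makes explicit should be added. First, the fixed transformation diagram has uniformly bounded fiber degree, so zero-dimensional blocks contain only $O(1)$ points; this is what guarantees that your pigeonholed block has a positive-dimensional irreducible component carrying many conjugates. Second, the periods $\xi_i,\xi'_j$ lie in a finite extension of $L_\infty$, since $\xi_i=T^m\log_\Lambda(\exp_\Lambda(\xi_i/T^m))$ with $\exp_\Lambda(\xi_i/T^m)$ torsion and hence algebraic over $L$. This is what makes $X$ an affinoid over a discretely valued field, as Theorem~\ref{t:blockcounting} requires.
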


\begin{proof}
The reverse implication is clear: torsion points are Zariski dense in every
sub-$\Phi$-module, and translating by a torsion point preserves torsion
points.

We prove the converse. By the specialization argument above, after spreading
out the coefficients of $\cald$, $\cald'$, and of the equations defining
$V$, and after specializing at a closed point of the corresponding parameter
space, we may assume that $\cald$, $\cald'$, and $V$ are defined over a
finite extension $L/F$.

We first reduce to the case where $V$ is a curve. If $\dim V=0$, then
Zariski density of torsion points in $V$ means that $V$ is a torsion point.
If $\dim V=2$, then $V=\cald\times\cald'$, which is itself a
sub-$\Phi$-module. Thus we may assume that $V$ is an irreducible curve.

We now show that $V$ must be a translate of a sub-$\Phi$-module. Suppose,
towards a contradiction, that $V$ is not such a translate.

\proofstep{The torsion logarithm polydisk}
Let $\Lambda$ and $\Lambda'$ be the period lattices of $\cald$ and
$\cald'$, and fix $A$-bases
\[
(\xi_1,\dots,\xi_r)
\qquad\text{and}\qquad
(\xi'_1,\dots,\xi'_r)
\]
of $\Lambda$ and $\Lambda'$, respectively. Enlarging $L$ by a finite
extension if necessary, we may assume that all these periods lie in a finite
extension $\widetilde L_{\widetilde\infty}/L_\infty$.
Indeed, for $m\gg 1$, the points
\[
\exp_\Lambda(\xi_i/T^m)
\]
are $T^m$-torsion points of $\cald$, hence algebraic over $L$; once they
lie in the disk of convergence of $\log_\Lambda$, we have
\[
\xi_i=T^m\log_\Lambda\bigl(\exp_\Lambda(\xi_i/T^m)\bigr),
\]
and similarly for the $\xi'_j$.

By Lemma~\ref{l:torsionball}, every logarithmic preimage of a torsion point of
$\cald\times\cald'$ has a representative of the form
\[
\left(
\sum_{i=1}^r a_i\xi_i,\,
\sum_{j=1}^r b_j\xi'_j
\right)
\qquad
\text{with}
\qquad
a_i,b_j\in F
\quad\text{and}\quad
|a_i|_\infty, |b_j|_\infty\le q^{-1}.
\]
Thus the torsion logarithms are parametrized by $F$-rational points in the
bounded affinoid polydisk
\[
\mathbb D
:=
\Sp \widetilde L_{\widetilde\infty}
\llangle
TX_1,\dots,TX_{2r}
\rrangle .
\]

Define the $\widetilde L_{\widetilde\infty}$-linear map
\[
\ell\colon \A^{2r}_{\C_\infty}\rightarrow \G_{a,\C_\infty}^2
\qquad
(a_1,\dots,a_r,b_1,\dots,b_r)
\mapsto
\left(
\sum_{i=1}^r a_i\xi_i,\,
\sum_{j=1}^r b_j\xi'_j
\right).
\]
Let $G_V(X,Y)\in L[X,Y]$ be an irreducible polynomial defining the curve
$V$. We set
\[
\calx
:=
\left\{
\mathbf v\in\mathbb D:
G_V\bigl(\exp_\Phi(\ell(\mathbf v))\bigr)=0
\right\}.
\]
This is an affinoid subspace of $\mathbb D$. Write
\[
\calx=\Sp\calb,
\]
and consider the tuple
\[
\boldsymbol f
=
(TX_1,\dots,TX_{2r})\in\calb^{2r}.
\]
Let
\[
\alpha\colon \Sp \widetilde L_{\widetilde\infty}\rightarrow \Sp
\widetilde L_{\widetilde\infty}
\]
be the unique classical point of the base. With this notation, the torsion
logarithms above give points of
\[
\calx_\alpha(V_F,H;\boldsymbol f)
\]
for suitable $H$.

\proofstep{The block-counting upper bound}
We claim that, for every $\varepsilon>0$,
\begin{equation}\label{e:mm-upper}
\#\calx_\alpha(V_F,H;\boldsymbol f)\ll_\varepsilon H^\varepsilon.
\end{equation}

Apply Theorem~\ref{t:blockcounting} to
\[
\calx\rightarrow \Sp \widetilde L_{\widetilde\infty}
\]
with the tuple $\boldsymbol f$. For every $\varepsilon>0$, the theorem gives
a transformation diagram and a finite set
$\mathcal J=\mathcal J(\alpha,H)$
of pairs $(w_j,\alpha_j)$, with
\[
\#\mathcal J\ll_\varepsilon H^\varepsilon,
\]
such that
\[
\calx_\alpha(V_F,H;\boldsymbol f)
\subseteq
\bigcup_{j\in\mathcal J}
(\calx_{w_j})_{\alpha_j}(V_F,H;\boldsymbol f)
\subseteq
\calx_\alpha.
\]
Here the fibers
\[
(\calx_{w_j})_{\alpha_j}
\]
are algebraic, and their maps into $\calx_\alpha$ are closed immersions.

Let $\mathscr B$ be an irreducible positive-dimensional component of one such
algebraic fiber. We analyze its image under $\ell$.

If $\ell|_{\mathscr B}$ is nonconstant, then
\[
W_0:=\overline{\ell(\mathscr B)}^{\mathrm{Zar}}
\subset \G_{a,\C_\infty}^2
\]
is positive-dimensional. Since $\mathscr B\subseteq\calx$, we have
\[
\exp_\Phi\bigl(\ell(\mathscr B)\bigr)\subseteq V.
\]
Let $W$ be a maximal irreducible algebraic subvariety of $\exp_\Phi^{-1}(V)$
containing $W_0$. Then $W$ is positive dimensional. By
Theorem~\ref{alw}, the image
$\exp_\Phi(W)$
is a positive dimensional translate of a sub-$\Phi$-module contained in
$V$. Since $V$ is an irreducible curve, this forces $V$ itself to be a
translate of a sub-$\Phi$-module, contrary to our assumption.

Hence, for every positive dimensional algebraic block appearing in the
block-counting cover, the map $\ell$ must be constant on that block.

Now suppose that $\ell(\mathbf v)=\ell(\mathbf v')$ for two counted points
$\mathbf v,\mathbf v'\in F^{2r}$.
Writing $\mathbf v=(a_1,\dots,a_r,b_1,\dots,b_r)$,
and
$\mathbf v'=(a_1',\dots,a_r',b_1',\dots,b_r')$,
we get $\sum_i(a_i-a_i')\xi_i=0$ and
$\sum_j(b_j-b_j')\xi_j'=0$.
Since the $\xi_i$'s and $\xi_j'$'s are $F$-linearly independent, all
coefficients vanish. Hence $\mathbf v=\mathbf v'$. Therefore any block on
which $\ell$ is constant contains at most one counted $F$-point.

On the other hand, zero dimensional algebraic fibers in the fixed transformation diagram contain only $O(1)$ counted points, uniformly in
$\alpha$ and $H$. Therefore every algebraic fiber appearing in the
block-counting cover contributes $O(1)$ counted points. Since there are
$\ll_\varepsilon H^\varepsilon$ such fibers, we obtain
\[
\#\calx_\alpha(V_F,H;\boldsymbol f)\ll_\varepsilon H^\varepsilon,
\]
which proves \eqref{e:mm-upper}.

\proofstep{The Galois lower bound}
We now derive the contradiction. Suppose that $V$ contains a Zariski dense set
of torsion points. Since $V$ is a curve, and the set of torsion points of
bounded order is finite, $V$ contains torsion points of arbitrarily large
order.

Let
\[
x\in V(\C_\infty)\cap(\cald\times\cald')_{\mathrm{tor}}
\]
be a torsion point of order $a\in A-\{0\}$. For each
\[
\sigma\in\Hom_L(L(x),\C_\infty),
\]
the conjugate $x^\sigma$ is again a torsion point of order $a$, and it lies
on $V$ because $V$ is defined over $L$.

Choose for each $x^\sigma$ the representative of a logarithm given by
Lemma~\ref{l:torsionball}:
\[
z^\sigma
=
\ell(\mathbf v^\sigma),
\qquad
\mathbf v^\sigma\in F^{2r},
\qquad
|v_i^\sigma|_\infty\le q^{-1}.
\]
Then $\mathbf v^\sigma\in\calx_\alpha$. Moreover, by
Lemma~\ref{l:heightorder},
\[
H(\mathbf v^\sigma)\le |\ord(x^\sigma)|_\infty=|a|_\infty.
\]
Thus all conjugates $x^\sigma$ give counted points in
$\calx_\alpha(V_F,H;\boldsymbol f)$ after replacing $H$ by a constant
multiple of $|a|_\infty$.
These points are distinct: if
$\mathbf v^\sigma=\mathbf v^\tau$,
then $z^\sigma=z^\tau$,
and hence $x^\sigma=\exp_\Phi(z^\sigma)=\exp_\Phi(z^\tau)=x^\tau$.

Therefore
\[
\#\calx_\alpha(V_F,H;\boldsymbol f)
\ge
[L(x):L].
\]
By Lemma~\ref{masserboundtorsion}, there are constants $C'>0$ and
$\eta>0$, independent of $x$, such that
\[
[L(x):L]\ge C'|a|_\infty^\eta.
\]
Combining this with the upper bound \eqref{e:mm-upper}, with
$H\ll |a|_\infty$, gives
\[
|a|_\infty^\eta
\ll
|a|_\infty^\varepsilon
\]
for every $\varepsilon>0$. Choosing $\varepsilon<\eta$, this is impossible
for $|a|_\infty$ sufficiently large. Hence $V$ cannot contain torsion
points of arbitrarily large order, contradicting the assumed Zariski density.

We have proved that if torsion points are Zariski dense in $V$, then $V$ is
a translate of a sub-$\Phi$-module.

\proofstep{The translate is torsion}
It remains to show that the translating point may be chosen torsion. Write
\[
V=\mathscr F+a
\]
for a sub-$\Phi$-module $\mathscr F$ and a point $a$. Since torsion points
are Zariski dense in $V$, choose a torsion point
\[
d\in V.
\]
Then $d=b+a$
for some $b\in\mathscr F$, hence
$a=d-b$.
Since $\mathscr F$ is a subgroup, translation by $a$ is the same as
translation by $d$:
\[
V=\mathscr F+a=\mathscr F+d.
\]
Thus $V$ is a translate of a sub-$\Phi$-module by a torsion point, i.e. a
torsion subvariety.
\end{proof}

\subsubsection{Andr\'e--Oort for a product of Drinfeld modular curves}

As in the characteristic zero case, the level structure plays no role in the Andr\'e--Oort statement that we are going to prove. In fact, raising the level from $M$ to $M'$ corresponds to taking a finite morphism of Drinfeld modular curves $Y(M')\rightarrow Y(M)$. Hence the preimage  of a  modular variety in $Y(M)^2$ consists of a finite union of modular varieties in $Y(M')^2$.

 \begin{thm}\label{t:ao}
    Assume that $q$ is  odd. Let $C\subset \A^2_{\C_\infty}$ be an irreducible algebraic curve. Then $C(\C_\infty)$ contains an infinite subset of CM points if and only if $C$ is a special subvariety.
 \end{thm}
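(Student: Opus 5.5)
The easy direction is immediate. CM points are Zariski dense in $\A^1$, which handles $\A^1\times\{x\}$ and $\{y\}\times\A^1$. For $Y_0'(N)$, the point $\boldsymbol j(z,\delta z)$ is CM whenever $z$ is quadratic, since $\delta\in\pgl_2(F)$ preserves $F$-quadraticity. So the work is the converse. Suppose $C$ contains infinitely many CM points $(j(z_1),j(z_2))$ and is not special. If $C$ is a vertical or horizontal line, its constant coordinate is a CM value and $C$ is special, so assume both projections dominant. As in the Manin--Mumford proof, $C$ is defined over a finite extension $L/F$: $C$ contains infinitely many $\overline F$-points, hence is $\Gal$-stable up to finitely many conjugates. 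For a CM point with discriminants $D_1,D_2$, set $D:=\max(|D_1|_\infty,|D_2|_\infty)$. By \eqref{e:cmorbit} and \eqref{siegelclassnumber}, its $\Gal(\overline F/L)$-orbit, which lies on $C$, has size $\gg_\varepsilon D^{1/2-\varepsilon}$, up to a bounded loss from $[L:F]$ and from the compositum of the two ring class fields; the larger of the two Picard groups already gives this.

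Next I would confine the Galois orbit to a bounded region. For each conjugate, choose representatives $z_i\in\calf$. By Lemma~\ref{colmezbound}, $\widetilde h_{\mathrm{Tag}}\ll_\varepsilon D^\varepsilon$. After a finite extension to reach stable reduction (the stable height is invariant), Proposition~\ref{p:taguchi-weil} gives $h(j(z_i))\ll_\varepsilon D^{\varepsilon}$. Since $j$ is integral, $h(j)$ dominates the average of $\log^+|j^\sigma|_\infty$, and each conjugate has $\log_q|j|_\infty\asymp|z|_i$ by the $Q$-expansion \eqref{jexpansion} and Gekeler's estimate $\log_q|Q|^{-1}\sim|z|_i$. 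Hence $|z_i^\sigma|_i\le c\,[K(j):K]\,D^\varepsilon$. I expect this height factor to be the main obstacle. The naive pointwise bound loses a factor of the degree, so I would instead use averaging: by Markov, a positive proportion of conjugates satisfy $|z^\sigma|_i\ll D^{2\varepsilon}$. I then keep only those conjugates. Lemma~\ref{l:cover} (this is where $q$ odd is used) covers them by $D^{O(\varepsilon)}$ balls $U_P^\rho$. By pigeonhole, one pair of balls $U_P\times U_{P'}$ contains $\gg D^{1/2-O(\varepsilon)}$ conjugate pairs. Each of these points has height $\le D^{1/2}$ by Lemma~\ref{l:heightfundamentaldomain}, and is quadratic over $F$.

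For counting, translate and rescale each ball to the standard unit polydisk. The family of sets $\{(u,v): G_C(j(P+\varpi u),j(P'+\varpi v))=0\}$, with the centers $P,P'$ as parameters, is a single affinoid family over a base parametrizing the centers. I would treat quadratic points through their two $F$-coordinates with respect to $\{1,\sqrt D\}$, or use Remark~\ref{r:blockcounting-finite-extensions} with $d=2$. Theorem~\ref{t:blockcounting} then gives $\ll_\varepsilon H^\varepsilon$ algebraic blocks, with constants uniform in the ball. On a positive-dimensional block, the image in $\Omega^2$ is an algebraic curve inside $\boldsymbol j^{-1}(C)$. Theorem~\ref{p:hyperbolicAL} then makes $C$ weakly special, so $C=Y_0'(N)$, which is special, a contradiction. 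Zero-dimensional blocks contribute $O(1)$ points each. So the count is $\ll D^{O(\varepsilon)}$, against the lower bound $D^{1/2-O(\varepsilon)}$. This bounds $D$, leaving finitely many CM points, which is a contradiction.
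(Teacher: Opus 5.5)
Your outline matches the paper's proof up to the covering and pigeonhole step, and again from the positive-dimensional block onward (Theorem~\ref{p:hyperbolicAL}, then a weakly special curve through a CM point is special). The gap is your claim that $\{(u,v): G_C(j(P+\varpi u),j(P'+\varpi v))=0\}$, with the centres as parameters, is ``a single affinoid family'' with constants uniform in the ball.

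The truncation only gives $|z_i^\sigma|_i\ll D^{\varepsilon}$. So the pigeonholed centres have $|P|_i$ growing with $D$: they run off into the cusp. No fixed affinoid base contains all of them, since $\calf$ is not quasi-compact. Moreover $j$ is unbounded near the cusp, so $G_C\bigl(j(P+\varpi u),j(P'+\varpi v)\bigr)$ is not an element of any affinoid algebra over such a base. If you instead take the affinoid $\{|P|_i\le D^{\varepsilon}\}$ as base, the family depends on $D$, and so does the constant of Theorem~\ref{t:blockcounting}. Then $D^{O(\varepsilon)}$ against $D^{1/2-O(\varepsilon)}$ no longer yields a contradiction. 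This uniformity is exactly what the paper identifies as carrying the argument, so it cannot be left as an assertion.

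The paper supplies it as follows. Fix $c_0$ so that the $Q$-expansion \eqref{jexpansion} holds and $|Q(w)|_\infty\le s$ whenever $|w|_i\ge c_0$ (Lemma~\ref{l:carlitz-cusp}). For a cuspidal centre, replace $P$ by the bounded parameter $b=Q(P)$. Additivity of $\exp_{\ccal}$ gives $Q(P+u)=b/\bigl(1+b\exp_{\ccal}(\widetilde\pi u)\bigr)$. Then $1/j(P+u)=\widehat J\bigl(Q(P+u)\bigr)$ with $\widehat J(U)=U^{q-1}/\bigl(1+U^{q-1}r_J(U)\bigr)$, which is analytic on a fixed bi-disk in $(u,b)$. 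Correspondingly, $G$ is replaced by $G^{\vee\vee}$, $G^{\vee 0}$ or $G^{0\vee}$ in the cuspidal coordinates. The bounded part $\{p\in\calf:|p|_i\le c_0\}$ is covered by finitely many affinoids $V_\nu\subset V_\nu^+$ on which $(u,p)\mapsto j(p+u)$ is analytic. This gives finitely many overconvergent families, hence uniform constants. Specializing $b=Q(P)$ or $p=P$ recovers the fibre $\{G(j(P+x),j(P'+y))=0\}$, because $1/j\neq 0$ there.

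A smaller point: Lemma~\ref{l:cover} as stated has $q^{2d'+2}$ centres with $d'=d+c$. This contains a factor $q^{2c}\approx|\Delta|_\infty$ and is not $D^{O(\varepsilon)}$. To get your $D^{O(\varepsilon)}$ balls, observe that the $\sqrt{\Delta}$-coordinate of $z$ has absolute value at most $Rq^{-\deg\Delta/2}$. Hence only centres with $\deg P_1,\deg P_2\le d$ actually occur.
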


\begin{proof}

The reverse implication is obvious.
We prove the converse by contradiction. Suppose that $C$ is not special, but
that $C(\C_\infty)$ contains a Zariski dense set of CM points.

\proofstep{Preliminary reductions and choice of a CM point}
Since CM points are algebraic over $F$, the Zariski density of these points
implies that $C$ is defined over $F^{\mathrm{alg}}$. Fix a finite extension
$M/F$ over which $C$ is defined.

We may assume that $C$ is neither vertical nor horizontal. Indeed, if $C$ is
vertical then the existence of a CM point on $C$ forces $a$ to be CM, so $C$ is
special. The horizontal case is analogous.

Fix a CM point
\[
x=(x_1,x_2)\in C(\C_\infty)
\]
whose two CM discriminants are sufficiently large. Write
\[
x_i=j(z_i')
\]
with $z_i'\in\Omega$ quadratic. After acting by an element of
$\pgl_2(A)^2$, choose representatives
\[
z=(z_1,z_2)\in \boldsymbol j^{-1}(x)\cap
\bigl(\calf_{K_1}\times\calf_{K_2}\bigr).
\]
For $i=1,2$, let $\calo_i\subset K_i$ be the CM order of the Drinfeld
module corresponding to $z_i$, and let $D_i$ be its discriminant. By CM theory \eqref{e:cmorbit},
\[
[K_i(j(z_i)):K_i]=\#\pic(\calo_i).
\]
Moreover, since $z_i\in\calf_{K_i}$, Lemma~\ref{l:heightfundamentaldomain}(1)
gives
\[
H(z_i)^2\le |D_i|_\infty.
\]
By Siegel's lower bound \eqref{siegelclassnumber},
\[
\#\pic(\calo_i)\gg_\varepsilon |D_i|_\infty^{1/2-\varepsilon}.
\]

Set
\[
j_i:=j(z_i),
\qquad
E:=M K_1K_2,
\qquad
L:=E(j_1,j_2),
\]
and define
\[
\Sigma:=\Hom_E(L,\C_\infty).
\]
Since $C$ is defined over $M\subset E$, every conjugate $x^\sigma=(j_1^\sigma,j_2^\sigma)$
for $\sigma\in\Sigma$, again lies on $C$.

\proofstep{Galois orbit lower bound}
We first record the lower bound for $\#\Sigma$. For $i=1,2$,
\[
[L:E]\ge [E(j_i):E]
      \ge \frac{[K_i(j_i):K_i]}{[E:K_i]}.
\]
Since $M/F$ is fixed and $[K_1K_2:K_i]\le 2$, the degrees
$[E:K_i]$ are bounded independently of the CM point. Therefore
\[
\#\Sigma=[L:E]\gg \max_i \#\pic(\calo_i).
\]
Setting $D:=\max\{|D_1|_\infty,|D_2|_\infty\}$,
we obtain
\begin{equation}\label{e:galois-lower}
\#\Sigma\gg_\varepsilon D^{1/2-\varepsilon}.
\end{equation}

\proofstep{Conjugates in the fundamental domain}
For every $\sigma\in\Sigma$, fix representatives
$z^\sigma=(z_1^\sigma,z_2^\sigma)\in
\calf_{K_1}\times\calf_{K_2}$ such that
\[
\boldsymbol j(z^\sigma)=x^\sigma.
\]
These points are again quadratic with CM orders $\calo_1,\calo_2$. In
particular,
\[
H(z_i^\sigma)^2\le |D_i|_\infty
\qquad
\text{for}
\quad
i=1,2.
\]

\proofstep{Truncating the cusp}
We now show that many of these representatives are not too deep in the cusp.
For fixed $i$, the restrictions of the embeddings $\sigma\in\Sigma$ to
$E(j_i)$ run through all embeddings of $E(j_i)$ over $E$, with equal
multiplicity. Therefore, by the definition of the Weil height,
\[
\frac{1}{\#\Sigma}
\sum_{\sigma\in\Sigma}
\log_q^+|j_i^\sigma|_\infty
\ll h(j_i),
\]
where the implicit constant depends only on $M/F$.

By the preceding average estimate, after increasing the implicit constant, at least
three quarters of the embeddings $\sigma\in\Sigma$ satisfy
\[
\log_q^+|j_i^\sigma|_\infty\ll h(j_i).
\]
Intersecting the good subsets for $i=1,2$, we obtain a subset
$\Sigma'\subseteq\Sigma$
with
\[
\#\Sigma'\ge \frac12\#\Sigma
\]
such that, for every $\sigma\in\Sigma'$ and $i=1,2$,
\[
\log_q^+|j_i^\sigma|_\infty\ll h(j_i).
\]

On the fundamental domain, the $Q$-expansion of $j$ (see \cite[Lemma~5.5]{gekelerj}) gives
\[
\log_q^+|j(w)|_\infty = (q-1)|w|_i+O(1)
\]
for $w\in\calf$. Hence
\[
|z_i^\sigma|_i\ll h(j_i)
\qquad
\text{for}
\qquad
\sigma\in\Sigma',\;\; \text{and}\;\; i=1,2.
\]
Combining Lemma~\ref{colmezbound} with Proposition~\ref{p:taguchi-weil}, and
shrinking $\varepsilon$ if necessary, we obtain
\[
h(j_i)\ll_\varepsilon |D_i|_\infty^\varepsilon.
\]
Thus, for every $\sigma\in\Sigma'$,
\[
z_i^\sigma\in
\calr_i:=\big\{w\in\calf_{K_i}: |w|_i\le |D_i|_\infty^\varepsilon\big\}.
\]

\proofstep{Covering and pigeonhole principle}
We now apply Lemma~\ref{l:cover}. Write
\[
K_i=F\big(\sqrt{\Delta_i}\big)
\]
with $\Delta_i\in A$ squarefree, and set
\[
R_i:=|D_i|_\infty^\varepsilon,
\qquad
d_i:=\lceil \log_q R_i\rceil,
\qquad
c_i:=\left\lceil\frac{\deg_T(\Delta_i)}2\right\rceil,
\qquad
d_i':=d_i+c_i.
\]
Let $\rho_i\in\{0,1\}$ be as in Lemma~\ref{l:cover}. Then Lemma~\ref{l:cover}
gives finite sets of centers
\[
C_{i,d_i'}:=
\Bigl\{
P_2+\frac{P_1}{T^{c_i}}\sqrt{\Delta_i}:
P_1,P_2\in A,\ \deg P_j\le d_i'
\Bigr\}
\]
such that
\[
\calr_i\subseteq
\bigcup_{P\in C_{i,d_i'}} U_P^{\rho_i},
\]
where $U_P^{\rho_i}=
\{w\in\Omega: |w-P|_\infty\le q^{-1+\rho_i/2}\}$.
Moreover, $\#C_{i,d_i'}=q^{2d_i'+2}$.

Therefore
\[
\calr_1\times\calr_2
\subseteq
\bigcup_{P\in C_{1,d_1'}}
\bigcup_{P'\in C_{2,d_2'}}
U_P^{\rho_1}\times U_{P'}^{\rho_2}.
\]
By the pigeonhole principle, there exist
$P\in C_{1,d_1'}$ and $P'\in C_{2,d_2'}$
such that
\[
\Sigma'':=
\left\{
\sigma\in\Sigma':
(z_1^\sigma,z_2^\sigma)\in
U_P^{\rho_1}\times U_{P'}^{\rho_2}
\right\}
\]
satisfies
\begin{equation}\label{e:Sigma2-lower}
\#\Sigma''
\ge
\frac{\#\Sigma'}{\#C_{1,d_1'}\cdot \#C_{2,d_2'}}.
\end{equation}
Thus the product of admissible opens
$U_P^{\rho_1}\times U_{P'}^{\rho_2}$
contains at least
$\frac{\#\Sigma'}{\#C_{1,d_1'}\cdot \#C_{2,d_2'}}$
quadratic lifts of Galois conjugates of $x=\boldsymbol j(z)$.

\proofstep{Local uniformisation near the chosen product}
For $i=1,2$, set
\[
r_i:=q^{-1+\rho_i/2}<1.
\]
We first observe that the centers $P$ and $P'$ belong to the fundamental
domain. Indeed, choose $\sigma\in\Sigma''$. For every $c\in F_\infty$, we have $|z_1^\sigma-c|_\infty
\ge|z_1^\sigma|_i
\ge 1>r_1\ge|z_1^\sigma-P|_\infty$. It follows from the ultrametric inequality that  $|P-c|_\infty=|z_1^\sigma-c|_\infty$.
Taking the infimum over $c\in F_\infty$, we obtain $|P|_i=|z_1^\sigma|_i$. Moreover, since $|z_1^\sigma-P|_\infty<|z_1^\sigma|_\infty$, we have $|P|_\infty=|z_1^\sigma|_\infty$. Hence $|P|_\infty=|P|_i\ge1$, so $P\in\calf$. The same argument gives $P'\in\calf$.

Let us set
\[
Q(w):=\exp_{\ccal}(\widetilde\pi w)^{-1}.
\]
Writing $r_J(U)$ for the power series denoted by $r(Q)$ in
\eqref{jexpansion}, we have, in a neighbourhood of the cusp, \[
j(w)=Q(w)^{-(q-1)}+r_J(Q(w)).
\]
Therefore the power series
\[
\widehat J(U):=
\frac{U^{q-1}}{1+U^{q-1}r_J(U)}
\]
is holomorphic on a sufficiently small disk about $0$ and satisfies
\[
\widehat J(Q(w))=\frac1{j(w)}
\]
whenever $w$ lies sufficiently deep in the cusp. Fix $r^+\in \Q$ such that
$\max\{r_1,r_2\}<r^+<1$.
Fix also rational radii
$0<s<s^+$ such that $\widehat J$ is holomorphic on the closed disk
$|U|_\infty\le s^+$
and
\begin{equation}\label{e:cusp-radii}
s^+\cdot|\widetilde\pi|_\infty r^+<1.
\end{equation}
Pick $c_0>1$ sufficiently large so that the above
$Q$-expansion represents $j(w)$ whenever
$|w|_i\ge c_0$,
and so that
\[
|Q(w)|_\infty\le s
\qquad
\text{whenever}
\qquad
|w|_i\ge c_0.
\]
The second condition is possible by
Lemma~\ref{l:carlitz-cusp}.

We shall call a center $P$ {\em cuspidal} if $|P|_i\ge c_0$ and {\em bounded} otherwise. We first construct the family over the cuspidal part. For
$|u|_\infty\le r^+$ and $|b|_\infty\le s^+$
we set
\[
\mathcal Q(u,b):=
\frac{b}{1+b\exp_{\ccal}(\widetilde\pi u)}.
\]
Since $\exp_{\ccal}(\widetilde\pi u)=\widetilde\pi\,\exp_A(u)$,
and since every non-zero element of $A$ has absolute value at least $1$,
the product expansion of $\exp_A$ gives
\[
|\exp_{\ccal}(\widetilde\pi u)|_\infty
=
|\widetilde\pi|_\infty |u|_\infty
\qquad
\text{for}
\quad
|u|_\infty<1.
\]
It follows from \eqref{e:cusp-radii} that
$|b\exp_{\ccal}(\widetilde\pi u)|_\infty
\le
s^+|\widetilde\pi|_\infty r^+
<1$. Hence $1+b\exp_{\ccal}(\widetilde\pi u)$ is a unit on the above closed bi-disk and
\[
|\mathcal Q(u,b)|_\infty=|b|_\infty\le s^+.
\]
Therefore
\[
\Phi^{\mathrm{cusp}}(u,b)
:=
\widehat J\bigl(\mathcal Q(u,b)\bigr)
\]
is holomorphic on this fixed bounded bi-disk.

Suppose now that $P$ is cuspidal and specialize $b=Q(P)$. By the
additivity of the Carlitz exponential we obtain
\[
\mathcal Q(u,Q(P))
=\frac{Q(P)}{1+Q(P)\exp_{\ccal}(\widetilde\pi u)}
=Q(P+u).
\]
Moreover, since $|u|_\infty\le r^+<1\le |P|_i$, we have
\[
|P+u|_i=|P|_i\ge c_0.
\]
Thus the $Q$-expansion is valid throughout the translated disk and
\begin{equation}\label{e:cusp-specialization}
\Phi^{\mathrm{cusp}}(u,Q(P))=
\frac1{j(P+u)}.
\end{equation}
We next treat the bounded part. Set
\[
\calf_{\mathrm{bd}}
:=
\{p\in\calf:|p|_i\le c_0\}.
\]
Note that the set
$\calf_{\mathrm{bd}}$ is contained in a fixed finite union of affinoid domains. Indeed, fix $\varrho\in\Q$ such that
$r^+<\varrho<1$. We may choose finitely many pairs of affinoid domains
$V_\nu\subset V_\nu^+\subset\Omega$ for $1\le\nu\le m$, such that
\[
\calf_{\mathrm{bd}}
\subseteq
\bigcup_{\nu=1}^m V_\nu,
\]
each $V_\nu$ is  compact in $V_\nu^+$, and $|p|_i\ge\varrho$ for all
$p\in V_\nu^+$. If $p\in V_\nu^+$ and $|u|_\infty\le r^+$ then, for every $c\in F_\infty$, we have $|p-c|_\infty\ge|p|_i
\ge\varrho>r^+\ge|u|_\infty$.  Therefore
$|p+u-c|_\infty=|p-c|_\infty$,
and in particular $p+u\in\Omega$. We may consequently define
\[
\Phi_\nu^{\mathrm{bd}}(u,p):=j(p+u)
\]
which is holomorphic on the fixed bounded affinoid
\[
\{|u|_\infty\le r^+\}\times V_\nu^+.
\]
Let $G(X,Y)\in\C_\infty[X,Y]$
be the irreducible polynomial defining $C$, and set $d_X:=\deg_XG$ and $d_Y:=\deg_YG$. Consider the polynomials
\[
G^{\vee\vee}(U,V):=
U^{d_X}V^{d_Y}
G\left(\frac1U,\frac1V\right)
\]
and
\[
G^{\vee 0}(U,Y)
:=
U^{d_X}
G\left(\frac1U,Y\right)
\qquad
\text{and}
\qquad
G^{0\vee}(X,V)
:=
V^{d_Y}
G\left(X,\frac1V\right).
\]
We now define the required rigid analytic family. There are four cases.

If both $P$ and $P'$ are cuspidal, let $\caly$ be the bi-disk define by
\[
|b|_\infty\le s
\qquad
\text{and}
\qquad
|b'|_\infty\le s
\]
and define $\calx\rightarrow\caly$ by
\[
G^{\vee\vee}\left(
\Phi^{\mathrm{cusp}}(x,b),
\Phi^{\mathrm{cusp}}(y,b')
\right)=0.
\]
If $P$ is cuspidal and $P'$ is bounded, choose $\nu'$ such that
$P'\in V_{\nu'}$. Let
\[
\caly=
\{|b|_\infty\le s\}\times V_{\nu'}
\]
and define $\calx\rightarrow\caly$ by
\[
G^{\vee 0}\left(
\Phi^{\mathrm{cusp}}(x,b),
\Phi_{\nu'}^{\mathrm{bd}}(y,p')
\right)=0.
\]
If $P$ is bounded and $P'$ is cuspidal, choose $\nu$ such that
$P\in V_\nu$. Let
\[
\caly
=
V_\nu\times\{|b'|_\infty\le s\}
\]
and define $\calx\rightarrow\caly$ by
\[
G^{0\vee}\left(
\Phi_\nu^{\mathrm{bd}}(x,p),
\Phi^{\mathrm{cusp}}(y,b')
\right)=0.
\]
Finally, if both $P$ and $P'$ are bounded, choose $\nu,\nu'$ such that $P\in V_\nu$ and $P'\in V_{\nu'}$.
Let
\[
\caly=V_\nu\times V_{\nu'}
\]
and define $\calx\rightarrow\caly$ by
\[
G\left(
\Phi_\nu^{\mathrm{bd}}(x,p),
\Phi_{\nu'}^{\mathrm{bd}}(y,p')
\right)=0.
\]
In every case, the fiber variables satisfy
\[
|x|_\infty\le r_1
\qquad
\text{and}
\qquad
|y|_\infty\le r_2.
\]
The defining functions extend to the larger domains defined by $|x|_\infty,|y|_\infty\le r^+$ and
$|b|_\infty,|b'|_\infty\le s^+$,
and to the larger affinoids $V_\nu^+$ and $V_{\nu'}^+$. Thus each of
the above families admits the overconvergent presentation required by Theorem~\ref{t:blockcounting}. Since only finitely many families occur, all counting constants may be chosen uniformly.
Let $\alpha\in\caly$ be the parameter obtained by specializing
\[
b=Q(P)
\quad\text{or}\quad
p=P,
\]
according as $P$ is cuspidal or bounded, and similarly $b'=Q(P')$ or $p'=P'$
in the second coordinate. In the cuspidal coordinates, the specialized functions in
\eqref{e:cusp-specialization} are non-zero. Hence clearing denominators
does not introduce any additional points in the specialized fiber. We
therefore have
\[
\calx_\alpha
=
\left\{
(x,y):
G\bigl(j(P+x),j(P'+y)\bigr)=0,\
|x|_\infty\le r_1,\
|y|_\infty\le r_2
\right\}.
\]

For every $\sigma\in\Sigma''$, set
\[
x^{(\sigma)}:=z_1^\sigma-P,
\qquad
y^{(\sigma)}:=z_2^\sigma-P'.
\]
Then $|x^{(\sigma)}|_\infty\le r_1$,
and $|y^{(\sigma)}|_\infty\le r_2$ and
\[
\bigl(x^{(\sigma)},y^{(\sigma)}\bigr)\in\calx_\alpha.
\]

\proofstep{The counted points and their heights}
Write
\[
\calx=\Sp\calb.
\]
Following the notation of Theorem~\ref{t:blockcounting}, we take
\[
\boldsymbol f=(f_1,f_2):=(x,y)\in\calb^2.
\]

We now choose the local field over which we count. Let $F_0/F_\infty$ be a
finite local extension over which the coefficients of the above family and the
chosen $Q$-expansions are defined. Set
\[
F_\alpha:=F_0K_{1,\infty}K_{2,\infty}.
\]
After enlarging $F_0$, the classical point
$\alpha\in \caly$
is defined over $F_\alpha$. Moreover,
\[
[F_\alpha:F_0]\le 4,
\]
so the degree parameter in Theorem~\ref{t:blockcounting} is uniformly bounded.

For each $\sigma\in\Sigma''$, let
\[
\alpha_\sigma'\in\calx_\alpha
\]
be the classical point with coordinates
\[
\alpha_\sigma'{}^\#(x)=x^{(\sigma)}
=
z_1^\sigma-P
\qquad
\text{and}
\qquad
\alpha_\sigma'{}^\#(y)=y^{(\sigma)}
=
z_2^\sigma-P'.
\]
By construction, $\bigl(x^{(\sigma)},y^{(\sigma)}\bigr)\in F_\alpha^2$.
Thus these points are counted by the set
\[
\calx_\alpha(V_{F_\alpha},H;\boldsymbol f)
\]
once $H$ is chosen large enough.

We next record the required height bound. By the standard height inequalities,
\[
H\bigl(x^{(\sigma)}\bigr)
=
H(z_1^\sigma-P)
\ll
H(z_1^\sigma)\,H(P)
\qquad
\text{and}
\qquad
H\bigl(y^{(\sigma)}\bigr)
=
H(z_2^\sigma-P')
\ll
H(z_2^\sigma)\,H(P')\ .
\]
The construction of the sets $C_{i,d_i'}$ gives polynomial bounds for
$H(P)$ and $H(P')$ in terms of the discriminants, while
Lemma~\ref{l:heightfundamentaldomain} gives
\[
H(z_i^\sigma)^2\le |D_i|_\infty
\qquad
\text{for}
\quad
i=1,2.
\]
Consequently, if
$D:=\max\{|D_1|_\infty,|D_2|_\infty\}$,
then there exists a constant $A>0$, independent of the CM point, such that
\[
H(\alpha_\sigma';\boldsymbol f)\le D^A
\qquad
\text{for every }\sigma\in\Sigma''.
\]
Fix $H:=D^A$.
Thus the points $\alpha_\sigma'$, with $\sigma\in\Sigma''$, all belong to
$\calx_\alpha(V_{F_\alpha},H;\boldsymbol f)$.

\proofstep{Application of block counting}
We now apply Theorem~\ref{t:blockcounting} to the family
\[
\varphi\colon\calx\rightarrow\caly
\]
with the tuple $\boldsymbol f=(x,y)$. For every $\eta>0$, the theorem gives
a transformation diagram and a finite set
$\mathcal J=\mathcal J(\alpha,H)$
of pairs $(w_j,\alpha_j)$, with
\[
\#\mathcal J\ll_\eta H^\eta,
\]
where the implicit constant is independent of the chosen centers $P,P'$, such
that
\[
\calx_\alpha(V_{F_\alpha},H;\boldsymbol f)
\subseteq
\bigcup_{j\in\mathcal J}
(\calx_{w_j})_{\alpha_j}(V_{F_\alpha},H;\boldsymbol f)
\subseteq
\calx_\alpha.
\]
Here each $w_j$ is a leaf of the transformation diagram, each
$\alpha_j$ is a classical point lying over $\alpha$, and the fibers
\[
(\calx_{w_j})_{\alpha_j}
\]
are algebraic. Moreover, by Theorem~\ref{t:blockcounting}, their maps into
$\calx_\alpha$ are closed immersions.
The points $\alpha_\sigma'$, for $\sigma\in\Sigma''$, are distinct.
Indeed, since $P$ and $P'$ are fixed, the pair
$\bigl(x^{(\sigma)},y^{(\sigma)}\bigr)$ determines
\[
\bigl(z_1^\sigma,z_2^\sigma\bigr)
=\bigl(P+x^{(\sigma)},P'+y^{(\sigma)}\bigr)
\]
and hence determines $(j_1^\sigma,j_2^\sigma)$, which determines the embedding $\sigma$ of $L=E(j_1,j_2)$ over $E$.

Suppose that every algebraic fiber
$(\calx_{w_j})_{\alpha_j}$ for $j\in\mathcal J$,
meeting the set of points
\[
\big\{\alpha_\sigma':\sigma\in\Sigma''\big\}
\]
were zero dimensional. Since the transformation diagram is fixed, the algebraic families
$(\calx_w\rightarrow\caly_w)$ have uniformly bounded degree. Hence there is a
constant $\kappa$, depending only on the diagram, such that every zero dimensional
fiber $(\calx_{w_j})_{\alpha_j}$ has at most $\kappa$ geometric points. Therefore
each such fiber contains at most $O(1)$ of our counted points. Therefore
\[
\#\Sigma''\ll_\eta H^\eta.
\]
For $\eta>0$ sufficiently small, this contradicts the lower bound
\eqref{e:Sigma2-lower}, after shrinking the previous $\varepsilon>0$ if
necessary. Hence one of the algebraic fibers produced by
Theorem~\ref{t:blockcounting} is positive dimensional.
\proofstep{The algebraic block and the Ax--Lindemann conclusion}
Let
\[
\mathscr B\subseteq\calx_\alpha
\]
be an irreducible positive dimensional component of such a fiber. This is the
algebraic block forced by the counting argument. Consider the algebraic automorphism
\[
\theta\colon\A^2_{\C_\infty}\rightarrow\A^2_{\C_\infty},
\qquad
\theta(x,y):=(P+x,P'+y).
\] By the definition of the fiber $\calx_\alpha$, the image $\theta(\mathscr B)$
is contained in the set cut out by
\[
G(j(z),j(z'))=0.
\]
Thus it gives a positive dimensional irreducible algebraic subvariety contained
in $\boldsymbol j^{-1}(C)$.
Let $Z$ be a maximal irreducible algebraic subvariety of $\boldsymbol j^{-1}(C)$ containing it.

By Theorem~\ref{p:hyperbolicAL}, the image
$\boldsymbol j(Z)$ is weakly special. Since
$\boldsymbol j(Z)\subseteq C$ and $C$ is an irreducible curve, it follows that $C$ is weakly special.
Finally, $C$ contains a CM point, so this weakly special curve is special.
This contradicts our assumption that $C$ is not special.
\end{proof}

\begin{rmk}
The use of the Taguchi height in the preceding proof can be replaced by an
equidistribution input. More precisely, the combination of Lemma~\ref{colmezbound} and Proposition~\ref{p:taguchi-weil} is used only to
show that, after discarding a controlled proportion of the Galois conjugates,
the corresponding lifts lie in a truncated part of the fundamental domain, so
that the rigid Pila--Wilkie theorem may be applied on finitely many affinoid
balls. An alternative way to obtain this would be to invoke the Duke-type equidistribution theorem for CM Drinfeld modules proved by the third
author in~\cite{fms}.
\end{rmk}

\begin{rmk}\label{r:odd-q-restriction}
The restriction to odd $q$ in Theorem~\ref{t:ao} comes from
Lemma~\ref{l:heightfundamentaldomain}. In the proof we use the estimate
$H(z)^2\le |D_c|_\infty$
for quadratic points $z\in\calf_K$. This is available when $q$ is odd
(and also, in characteristic $2$, when $\infty$ is inert in $K$).
However, in characteristic $2$ with $\infty$ ramified in $K$,
Lemma~\ref{l:heightfundamentaldomain} gives only
$H(z)\le |D_c|_\infty^{1/2}|\xi|_\infty$,
where $K=F(\xi)$ is given by a reduced Artin--Schreier equation. The extra
factor $|\xi|_\infty$ is not controlled uniformly by the discriminant in the
form needed in the Pila--Zannier argument. Thus the present method seems to prove the
Andr\'e--Oort theorem only under the assumption $q$ odd.
\end{rmk}

\end{document}